\newtheorem{theorem}{Theorem}[section]
\newtheorem{lemma}[theorem]{Lemma}
\newtheorem{proposition}[theorem]{Proposition}
\newtheorem{corollary}[theorem]{Corollary}
\newcommand{\R}{\mathbb{R}}
\newcommand{\C}{\mathbb{C}}
\newcommand{\beq}{\begin{equation}}
	\newcommand{\eeq}{\end{equation}}
\newcommand{\beqq}{\begin{equation*}}
	\newcommand{\eeqq}{\end{equation*}}
\theoremstyle{definition}
\newtheorem{definition}[theorem]{Definition}
\theoremstyle{remark}
\newtheorem{remark}[theorem]{Remark}
\newtheorem{conjecture}{Conjecture}[section]
\numberwithin{equation}{section}
\newcommand{\T}{\mathbb{T}}
\newcommand{\Z}{\mathbb{Z}}
\newcommand{\abs}[1]{\lvert#1\rvert}
\newcommand{\norm}[1]{\|#1\|}
\DeclareDocumentCommand{\abs}{s m}{
	\operatorname{}
	\IfBooleanTF{#1}{#2}{\left|#2\right|}}
\DeclareDocumentCommand{\norm}{s m}{
	\operatorname{}
	\IfBooleanTF{#1}{#2} {\left\| #2\right\|}}
\DeclareDocumentCommand{\inner}{s m}{
	\operatorname{}
	\IfBooleanTF{#1}{#2}{\left \langle#2\right \rangle}}
\DeclareDocumentCommand{\parenthese}{s m}{
	\operatorname{}
	\IfBooleanTF{#1}{#2}{\left(#2\right)}}
\DeclareDocumentCommand{\square}{s m}{
	\operatorname{}
	\IfBooleanTF{#1} {#2}{\left[#2\right]}}
\DeclareDocumentCommand{\bracket}{s m}{
	\operatorname{}
	\IfBooleanTF{#1}{#2}{\left\{#2\right\}}}
\begin{document}
	\author{Yilin Song}%
	\address{Yilin Song
		\newline \indent The Graduate School of China Academy of Engineering Physics,
		Beijing 100088,\ P. R. China}
	\email{songyilin21@gscaep.ac.cn}
	\author{Ruixiao Zhang}%
	\address{Ruixiao Zhang,
		\newline \indent The Graduate School of China Academy of Engineering Physics,
		Beijing 100088,\ P. R. China}
	\email{zhangruixiao21@gscaep.ac.cn}
	
\title[Nonlinear Schr\"{o}dinger equations]{ \bf Global well-posedness for the defocusing cubic nonlinear Schr\"odinger equation on $\Bbb T^3$  }
\begin{abstract}
	In this article, we investigate the global well-posedness for the defocusing, cubic nonlinear Schr\"{o}dinger equation posed on $\T^3$ with intial data lying in its critical space $H^\frac{1}{2}(\T^3)$. By  establishing the linear profile decomposition,  and applied this to the concentration-compactness/rigidity argument, we prove that if the solution remains bounded in the critical Sobolev space throughout the maximal lifespan, i.e.  $u\in L_t^\infty{H}^\frac{1}{2}(I\times\T^3)$, then $u$ is global.    
\end{abstract}
\maketitle
\begin{center}
	\begin{minipage}{100mm}
		{ \small {{\bf Key Words:}  Nonlinear Schr\"{o}dinger equation, critical norm, concentration-compactness, compact manifold.}
			{}
		}\\
		{ \small {\bf AMS Classification:}
			{35Q55, 35R01, 37K06, 37L50}
		}
	\end{minipage}
\end{center}
\section{Introduction}
In this article, we study the global well-posedness for the defocusing cubic nonlinear Schr\"odinger equation posed on three-dimensional tori, which is given by
\begin{align}\label{cubic-intro}
	\begin{cases}
		i\partial_tu+\Delta u=|u|^2u, & (t,x)\in\R\times\T_\theta^3, \\
		u(0,x)=u_0,
	\end{cases}
\end{align}
where $u:\R\times\T^3\to\Bbb C$ is the unknown function and $\T_\theta^3=\R^3/(\theta_1\Bbb Z\times\theta_2\Bbb{Z}\times\theta_3\Bbb Z)$ with $\theta=(\theta_1,\theta_2,\theta_3)$ and $\theta_i\in\R_+$. The tori $T_\theta^3$ is called  irrational tori if there exists at least one $\theta_i$ is not rational number. Otherwise, we call it rational tori. Since the analysis will not depends on the rationality of the tori, we abbreviate it to $\T^3$.
\subsection{The nonlinear Schrodinger equation at critical regularity}
In general, the defocusing nonlinear Schrodinger equation with power-type nonlinearity is given by
\begin{align}\label{general-NLS}
	\begin{cases}
		i\partial_tu+\Delta_M u=|u|^{\alpha-1}u, & (t,x)\in\R\times M, \\
		u(0,x)=u_0,
	\end{cases}
\end{align}
where $M=\R^d$ or $M=\T^d$.
The solution to \eqref{general-NLS} satisfies the following two conservation laws:
\begin{align*}
	\mbox{Mass}:& M(u)=\int_{M}|u|^2dx=M(u_0), \\
	\mbox{Energy}:& E(u)=\frac{1}{2}\int_{M}|\nabla u|^2dx+\frac{1}{\alpha+1}\int_{M}|u|^{\alpha+1}dx.
\end{align*}

When $M=\R^d$, the equation is invariant under the following scaling transformation,
\begin{align*}
	u_\lambda(t,x)=\lambda^{-\frac{2}{\alpha-1}}u(\lambda^{-2}t,\lambda^{-1}x).
\end{align*}
Moreover, we have
\begin{align*}
	\|u_\lambda(0)\|_{\dot{H}^{s_c}(\R^d)}=\|u(0)\|_{\dot{H}^{s_c}(\R^d)}\Longleftrightarrow s_c=\frac{d}{2}-\frac{2}{\alpha-1}.
\end{align*}
When $s=s_c$, we call the equation is at critical regularity.
Associated to the two conservation laws, when $s_c\in(0,1)$, we call (\ref{general-NLS}) as inter-critical; when $s_c>1$, we call it energy-supercritical. When $s_c=0$ or $s_c=1$, we call it mass-critical and energy-critical. It is worth to note that \eqref{cubic-intro} is $\dot{H}^\frac{1}{2}$-critical. When $M=\T^d$, the geometry breaks down the scaling symmetry. However, we still use the $s_c$ as the classification of the critical regularity.

For the Euclidean spaces, i.e. $M=\R^d$, the local well-posedness follows from the fixed-point theorem and the following Strichartz estimates
\begin{align*}
	\big\|e^{it\Delta}f\big\|_{L_t^qL_x^r(\R\times\R^d)}\lesssim\|f\|_{L_x^2(\R^d)},
\end{align*} 
where $(q,r)$ satisfies
\begin{align*}
	\frac{2}{q}=d(\frac{1}{2}-\frac{1}{r}),\quad 2\leq q,r\leq\infty,\quad (q,r,d)\neq(2,\infty,2).
\end{align*}
For the case of $s_c=1$, Bourgain\cite{Bourgain1999} first developed the induction on energy method and combined with the localized Morawetz estimates to prove the global well-posedness and scattering in dimension three.  Colliander-Keel-Staffilani-Takaoka-Tao\cite{Colliander2008} exploited the interaction Morawetz estimates and established the  more complicated energy-induction argument to show the scattering for non-radial data in $3D$. Later, Ryckman-Visan\cite{RyckmanVisan2007} and Visan\cite{Visan2007} proved the scattering on four-dimension and higher-dimensions $d\geq5$ respectively.

For the case of $s_c=0$, Killip-Tao-Visan\cite{KillipTaoVisan} and Tao-Visan-Zhang\cite{TaoVisanZhang2007} proved the global well-posedness and scattering for the radial initial data with $d\geq2$. Dodson\cite{Dodson2012}
developed the long-time Strichartz estimates and applied this to establish the frequency-localized Morawetz estimates to prove the global well-posedness and scattering for general initial data with $d\geq3$. For the lower diemnsions $d=1,2$, since the endpoint Strichartz estimates fail, he further introduced the atomic spaces and the bilinear estimates associated with the minimal blow-up solutions to show the scattering in \cite{Dodson2016a} and \cite{Dodson2016b}. For the related mass critical problem, we refer to \cite{YZ, YZ2}.   

For the non-conserved critical regularity $s_c\neq0,1$, the global well-posedness is only known for the small data. Due to the lack of conserved quantities, there is no uniformly control in the growth of Sobolev norm throught the time direction. Therefore, the unconditional global well-poseness and scattering for the large data is an open problem. Kenig-Merle first solved the following conditional global well-posedness and scattering for 3D cubic NLS with general data, which is now called the critical norm conjecture.
\begin{conjecture}[Critical norm, Kenig-Merle\cite{KeMeR3GWP2010}]
	Let $d\geq 1$ and $ \alpha > 1$ such that $s_c \geq0$. Let $u : I \times \R^d\to\Bbb C$ be
	a maximal-lifespan solution to (\ref{general-NLS}) such that $u \in L_t^\infty\dot{H}_x^{s_c}
	(I\times\R^d)$. Then $u$ is global  and scatters to a free solution.
\end{conjecture}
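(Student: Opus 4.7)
The plan is to follow the concentration-compactness/rigidity paradigm of Kenig--Merle. First I would argue by contradiction: assume there exists a solution $u:I\times\R^d\to\C$ to \eqref{general-NLS} with $\norm{u}_{L_t^\infty\dot{H}_x^{s_c}(I\times\R^d)}<\infty$ that does not scatter. Define the threshold
\begin{equation*}
E_c=\inf\bracket{A:\text{every }u\text{ with }\norm{u}_{L_t^\infty\dot{H}^{s_c}}\le A\text{ scatters}}.
\end{equation*}
By small-data theory $E_c>0$, and failure of the conjecture would give $E_c<\infty$. I then take an almost-optimizing sequence $u_n$ of solutions and apply a linear profile decomposition to the sequence of initial data in $\dot{H}^{s_c}(\R^d)$, decomposing them into a sum of concentrating linear profiles at distinct scales/positions/frequencies, plus a negligible remainder. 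Combining this with the nonlinear profile decomposition (solve NLS with each profile's initial data via Strichartz theory) and a stability/perturbation lemma for \eqref{general-NLS}, I would show that only one profile can be present in the limit, yielding a minimal blow-up solution $u_c$ (the ``critical element'') saturating $E_c$.

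The next step is the compactness extraction: by a pigeonholing argument applied to the profile decomposition along any sequence of times, the orbit of $u_c$ modulo the symmetries of the equation must be precompact. Concretely, there exist continuous $N(t)>0$, $x(t)\in\R^d$, $\xi(t)\in\R^d$ such that the family
\begin{equation*}
\bracket{N(t)^{-\f{2}{\alpha-1}}u_c\parenthese{t,N(t)^{-1}x+x(t)}e^{-ix\cdot\xi(t)}}_{t\in I_{\max}}
\end{equation*}
is precompact in $\dot{H}^{s_c}(\R^d)$. This compactness is the analytic content of ``minimality'' and is the main reason one can upgrade qualitative statements into quantitative ones.

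The rigidity step is where the main obstacle lies. One has to exclude the existence of any nontrivial almost periodic solution $u_c$ under the standing hypothesis $\norm{u_c}_{L_t^\infty\dot{H}^{s_c}}<\infty$. Following Kenig--Merle, I would carry out a case analysis based on the long-time behavior of $N(t)$: a self-similar/finite-time-blowup scenario, a quasi-soliton regime where $N(t)$ stays bounded, and a low-to-high frequency cascade. The self-similar and cascade scenarios can be ruled out by showing that the compactness forces additional regularity, pushing $u_c$ into the energy space (or even $L_x^2$), then invoking conservation of energy/mass together with the decay $N(t)\to\infty$ or $N(t)\to 0$ to force $u_c\equiv 0$. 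The soliton-like case is the hardest: one needs a frequency-localized interaction Morawetz inequality adapted to the $\dot{H}^{s_c}$ scale, from which the compactness of the orbit and the spacetime integrability of $u_c$ yield a contradiction with the nontriviality of $u_c$.

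The chief difficulty throughout is that $s_c\in(0,1)$ lies strictly between the conserved regularities ($\dot{L}^2$ mass and $\dot{H}^1$ energy), so none of the standard Morawetz estimates directly controls the critical norm. The key technical device is therefore to exploit the almost periodicity to bootstrap decay and regularity around the critical scale and then to prove a Morawetz-type bound that is simultaneously compatible with $\dot{H}^{s_c}$. Once this is available, combining it with the precompactness of the rescaled orbit closes the argument by contradiction and yields $E_c=\infty$, proving the conjecture.
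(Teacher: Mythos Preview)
The statement you are attempting to prove is labeled a \emph{Conjecture} in the paper, not a theorem; the paper does not prove it and does not claim to. It is presented as the general critical norm conjecture of Kenig--Merle, and the surrounding discussion makes clear that only partial cases are known (specific ranges of $s_c$ and $d$, often with radial assumptions), while other cases---notably $d=1$---remain completely open. So there is no ``paper's own proof'' to compare against.

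Your outline is a faithful summary of the Kenig--Merle road map and correctly identifies the rigidity step as the crux. But the proposal is not a proof: the sentence ``one needs a frequency-localized interaction Morawetz inequality adapted to the $\dot{H}^{s_c}$ scale'' is precisely the missing ingredient, and no such estimate is currently available for general $d\ge 1$ and $s_c\ge 0$. You also tacitly restrict to $s_c\in(0,1)$ in your discussion of the obstruction, whereas the conjecture covers the energy-supercritical range $s_c>1$ as well, where the nonlocality of $|\nabla|^{s_c}$ and the absence of any coercive conserved quantity above the solution's regularity create additional, unresolved difficulties. In low dimensions the interaction Morawetz machinery degenerates further (endpoint Strichartz fails, the bilinear virial identity loses positivity), which is why even the $s_c=\frac12$ case on $\R$ is open. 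In short, your plan is the right template for the cases that have been solved, but the step you flag as ``the hardest'' is genuinely unsolved in the generality of the conjecture, so the proposal cannot be completed as written.
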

For the inter-critical case $0<s_c<1$, Jason\cite{Murphy2014,Murphy2014b}  proved the long-time Strichartz estimates in for the inter-critical regularity level and show the case that $s_c\in[\frac{1}{2},1)$ for $d=4,5$; $s_c\in[\frac{1}{2},\frac{3}{4})$ for $d=3$ and $s_c=\frac{1}{2}$ for all $d\geq5$. Later, by further assuming the radial initial data, he fills the gap when $d=3$ in \cite{Murphy2015}.  For $s_c\in(\frac{1}{2},1)$ in $d\geq4$, we refer to \cite{GaoMiaoYang2019,GaoZhao2019}. Inspired by the work of \cite{Dodson2016b} who solve the mass-critical problem in $d=2$, Yu\cite{Yu2021} proved the scattering for quintic NLS on $\R^2$. For $d=1$, it is still a challenging problem.

For the supercritical regime, Killip-Visan\cite{KillipVisan2010} first proved the partial result by overcoming the difficulties brought by the non-locality of $|\nabla|^{s_c}.$ Miao-Murphy-Zheng\cite{MiaoMurphyZheng2014} utilized the long-time Strichartz estimate in the energy-supercritical level to show the scattering for $s_c\in(1,\frac{3}{2})$ for $d=4$. Afterwards, by using the strategy in Dodson\cite{D5} who overcomes the logarithmic failure of the double Duhamel formula, Dodson-Miao-Murphy-Zheng\cite{Dodson2017} proved the case  $s_c=\frac{3}{2}$ in dimension four. For the case of $s_c>\frac{3}{2}$, Lu-Zheng\cite{LuZheng2017} proved the scattering with radial initial data. For higher dimensional case, Zhao\cite{Tengfei} proved the scattering for $s_c\geq1$ and $\alpha$ is an odd integer. For other topic associated to the energy-supercritical NLS, we refer to \cite{Zheng2016,Zheng2014}. 

When $M=\T^d$, Bourgain\cite{Bour1993,BD} established the local-in-time Strichartz estimates 
\begin{align*}
	\big\|e^{it\Delta_{\T^d}}P_Nf\big\|_{L_{t,x}^p([0,T]\times\T^d)}\lesssim N^{\frac{d}{2}-\frac{d+2}{p}+\varepsilon}\|P_Nf\|_{L^2(\T^d)}.
\end{align*}
We note that $T<\infty$ and there exists an additional derivative loss, which are main differences between the tori case and Euclidean case. Due to this derivative loss, the local well-posedness  for NLS on tori is much more complicated than that on Euclidean space. By introducing the $X^{s,b}$-space, one can obtain the local well-posedness for \eqref{general-NLS} in $H^s$ with $s>s_c$. In order to obtain the critical local well-posedness, Herr-Tataru-Tzvetkov\cite{HerrT32011} established the trilinear estimate on the associated atomic space. The method formulated in \cite{HerrT32011} is now the standard arugment to treat the local well-posedness for NLS posed on the tori/compact manifold  in the critical space. Lee\cite{Lee} combined the Bony paradifferential technique with the atomic space to deduce the local well-posedness for \eqref{general-NLS} in $H^s(\T^3)$ with $s\geq s_c$.  Inspired by the concentration-compactness developed in \cite{KM}, Ionescu-Pausader\cite{IPT3} first proved the global well-posedness for energy-critical NLS on $\T^3$. Later, they generalized it to the waveguide manifold setting in \cite{IPRT3}. We refer to \cite{Luo-JMPA,Z,Yue,Z2,Z3} for more results on the long-time dynamics for energy-critical  NLS on waveguides.

For the nonlinear Schr\"odinger equation on tori at the non-conserved regularity, there are several interesting problems that remains unsolved. It is worth to point out that the scattering cannot occurs since the geometry is compact, see \cite{Chabert,CKSTT2010}  for more details. Recently, Yu-Yue\cite{YuYue} proved the global well-posedness of the quintic NLS on $\T^2$ under the assumption of the critical norm. Our main aim of this paper is to study the global well-posedness of cubic NLS on $\T^3$. 
\subsection{Main results}
\begin{theorem}\label{thm}
	Let $u_0\in H^\frac{1}{2}(\T^3)$ and let $u:I\times\T^3\to\Bbb C$ be a solution to \eqref{cubic-intro} on its maximal lifespan such that
	\begin{align*}
		\sup_{t\in I}\|u(t)\|_{H^\frac{1}{2}(\T^3)}<\infty.
	\end{align*} 
	Then $u$ is global, i.e. $I=\R$. Moreover, the solution map $u_0\to u$ is continuous from $H^\frac{1}{2}(\T^3)$ to $ X^{\frac{1}{2}}([-T,T])$.  
\end{theorem}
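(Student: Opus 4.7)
My plan is to follow the Kenig--Merle concentration--compactness/rigidity strategy. I would first set up a local well-posedness theory for \eqref{cubic-intro} in $H^{\frac{1}{2}}(\T^3)$ using the atomic-space framework ($U^p$, $V^p$, and the associated $X^{\frac{1}{2}}$ space) of Herr--Tataru--Tzvetkov \cite{HerrT32011}. This provides an $X^{\frac{1}{2}}$-based blow-up criterion together with a long-time stability/perturbation theorem: any near-solution with small error and bounded critical norm is shadowed by a true solution. It then suffices to show that $\|u\|_{X^{1/2}(J)}<\infty$ on every compact $J\subset I$ under the standing hypothesis $u\in L^\infty_t H^{\frac{1}{2}}$.

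\textbf{Step 2: Linear profile decomposition.} Following the strategy of Ionescu--Pausader \cite{IPT3}, I would establish a linear profile decomposition for bounded sequences $\{f_n\}\subset H^{\frac{1}{2}}(\T^3)$, of the form
\begin{equation*}
  e^{it\Delta}f_n = \sum_{j=1}^{J}\Phi_n^j + w_n^J,
\end{equation*}
with the remainder $w_n^J$ becoming small (as $J\to\infty$) in a suitable dispersive norm, and with the profiles $\Phi_n^j$ pairwise asymptotically orthogonal in $\dot H^{1/2}$. Two kinds of profile arise: (i) Euclidean profiles concentrated at scales $N_n^j\to\infty$ about points $x_n^j\in\T^3$ with time shifts $t_n^j\in\R$, which after rescaling converge to $\R^3$ Schr\"odinger data; and (ii) torus profiles at unit scale, which are simply space/time translates of a fixed function in $H^{\frac{1}{2}}(\T^3)$. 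Because $\T^3$ is compact, no large-scale profiles occur.

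\textbf{Step 3: Nonlinear profiles and extraction of a minimal counterexample.} To each linear profile I associate a nonlinear profile. For Euclidean profiles the limiting equation is the $\dot H^{1/2}$-critical cubic NLS on $\R^3$, whose scattering under a bounded critical norm is precisely the theorem of Kenig--Merle \cite{KeMeR3GWP2010}; the resulting scattering solution is transplanted back to $\T^3$ as an approximate solution of \eqref{cubic-intro}. Torus profiles are evolved directly by the local theory of Step~1. Assuming Theorem~\ref{thm} fails, a Palais--Smale-type induction on the size of the critical norm, combined with the stability theorem, then produces a minimal-norm almost-periodic counterexample $u_c:I_c\times\T^3\to\C$. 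Since Euclidean profiles scatter by the input from \cite{KeMeR3GWP2010}, $u_c$ must reduce to a single torus profile, so its orbit $\{u_c(t):t\in I_c\}$ is precompact in $H^{\frac{1}{2}}(\T^3)$ (the translation freedom being absorbed by compactness of $\T^3$).

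\textbf{Step 4: Rigidity, and main obstacle.} The last step is to preclude such a $u_c$. Since $u_c$ lives at a bounded frequency scale and has a precompact orbit in $H^{\frac{1}{2}}(\T^3)$, I would bootstrap additional regularity by combining the precompactness with the multilinear estimates in the atomic spaces, yielding $u_c\in L^\infty_t H^1(\T^3)$; conservation of mass and energy then provide a uniform $H^1$ bound, and the subcritical local theory extends $u_c$ globally and furnishes the $X^{\frac{1}{2}}$ control needed to contradict blow-up of the critical norm. The continuity statement $u_0\mapsto u\in X^{\frac{1}{2}}([-T,T])$ in Theorem~\ref{thm} is a direct consequence of the stability theorem applied along any convergent sequence of initial data. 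I expect the hardest step to be Step~2: extracting Euclidean profiles from $H^{\frac{1}{2}}(\T^3)$ data requires pulling back frequency-localized pieces via Poisson summation and fine partitioning, then verifying asymptotic orthogonality and the correct $L^p$-type smallness of the remainder in the atomic norm adapted to \eqref{cubic-intro}; the regularity upgrade in Step~4 is a close second, because the hypothesis $u\in L^\infty_t H^{\frac{1}{2}}$ is strictly weaker than energy and must be exploited through almost-periodicity together with Morawetz-type considerations.
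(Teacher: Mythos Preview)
Your Steps 1--3 are broadly in line with the paper: atomic-space local theory and stability, a linear profile decomposition with a scale-one profile $g$ and Euclidean (concentrating) profiles, and the use of the Kenig--Merle $\R^3$ scattering theorem as a black box to control the nonlinear Euclidean profiles. Where your proposal diverges is in how the scale-one/torus profile is eliminated, and this is precisely where the paper is simpler and your plan has a gap.

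The paper avoids any rigidity step altogether by a ``shrinking interval'' device. One defines
\[
\Lambda(L,\tau)=\sup\{\|u\|_{Z(I)}:\|u\|_{L^\infty_tH^{1/2}}\le L,\ |I|\le\tau\},\qquad \Lambda_*(L)=\lim_{\tau\to 0}\Lambda(L,\tau),
\]
so a putative finite threshold $E_{\max}$ produces a sequence $u_k$ with $\|u_k\|_{Z(I_k)}\to\infty$ on intervals $|I_k|\to 0$. The point is that for any fixed $g\in H^{1/2}(\T^3)$ one has $\|e^{it\Delta}g\|_{Z(-\eta,\eta)}\to 0$ as $\eta\to 0$; hence if the decomposition has no Euclidean profiles the $Z$-norm of $e^{it\Delta}u_k(0)$ on $I_k$ is eventually small and the local theory already gives a contradiction. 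The only nontrivial scenario is then a \emph{single Euclidean} profile with $g=0$, and that is dispatched directly by the transplanted $\R^3$ scattering (your nonlinear-profile construction) plus stability. No almost-periodic critical element is ever extracted.

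By contrast, your Step~4 asserts that a precompact-orbit torus solution can be bootstrapped from $H^{1/2}$ to $H^1$ and then handled by energy methods. This inference is not justified: precompactness of $\{u_c(t)\}$ in $H^{1/2}(\T^3)$ only gives uniform $H^{1/2}$-tightness at high frequencies, not extra regularity (a single fixed function in $H^{1/2}\setminus H^1$ already has compact orbit). There is no local smoothing on $\T^3$ to gain half a derivative, and the multilinear $X^{1/2}$ estimates do not by themselves upgrade regularity. So as written, the rigidity step would fail. The fix is exactly the paper's observation: run the induction on $\Lambda_*$ so that the contradicting intervals shrink, which makes the scale-one profile harmless and removes the need for any rigidity argument.
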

\begin{remark}
	To aviod technicalities, we restrict ourselves to the cubic NLS on $\T^3$, but the critical norm also holds for \eqref{general-NLS} with $\alpha=2k$ and $k\geq1$. For the energy-supercritical case, one should notice that by using the Bony paradifferential technique similar to \cite{Lee}, one can obtain the nonlinear estimate and stability result in $Z$-norms. 
\end{remark}
\subsection{Outline of the proof}
In this part, we will summarize the main steps and ingredients in the
proof of Theorem \ref{thm}. In our proof, we will follow the concentration-compactness/rigidity argument developed in \cite{KM,KeMeR3GWP2010} and utilize the Euclidean scattering solution as the black-box as in \cite{IPT3,IPRT3}. We also use the atomic space $U^p$ and $V^p$ as introduced in \cite{HadacKp22009,HerrT32011}.

\textit{Step 1}. The local well-posedness and stability result.

In the first step, we will use the atomic $U^p$ and $V^p$ space to find the suitable functional space $X^s$ and $Y^s$ which is frequency-localized and can be regarded as the ``critical" Bourgain spaces. To do this, we modified the definition in \cite{HerrT32011} to suit our $H^\frac{1}{2}$-critical setting. To obtain the global well-posedness for large data, following  \cite{IPT3}, we define a weaker critical norm $Z$ of the
solution, which plays a same role to the $L_{t,x}^5$-norm in \cite{KeMeR3GWP2010}.  Then, we prove the bilinear
estimate for frequency-localized functions in this atomic-type spaces defined above, which
overcome the logarithmic loss appeared in the Strichartz estimates and hence helps to
close the Picard iteration in our local theory (Proposition \ref{prop-lwp-LWP}). Moreover, using
this $Z$-norm and the local well-posedness theory, we also obtain a stability result(Proposition \ref{prop-stability}). 

\textit{Step 2}. The existence of the minimal blow-up solution.

In this step, we will use the black-box trick and the concentration-compactness argument to derive the minimal blow-up solution to \eqref{cubic-intro}. In order to use the scattering result on $\R^3$ as the black-box, we need to compare the solution on $\R^3$ and that on $\T^3$. This kind of comparison is valid only
for the  data that highly-concentrated and on the very short time interval. When considering the time interval beyond this time interval,  we need to study the extinction lemma, which is crucial in the study of large data theory in the curved space, see \cite{IPT3}. Then combining this extinction lemma with stability theory and the properties of the scattering solution on $\R^3$, we can show that the solution to \eqref{cubic-intro} on tori can be compared to the linear evolution with initial data $T_N\phi^{\pm\infty}$ where $\phi^{\pm\infty}$ is the scattering state of the  solution to \eqref{cubic-intro} on $\R^3$. After these preparations, one can prove the linear and nonlinear profile decompositions.. Then by  contradiction,   the induction on energy method pioneered by Bouragin\cite{Bourgain1999} and the linear profile decomposition, we can obtain the existence of minimal blow-up solution. Indeed, one can prove that there exists at most one profiles which is Euclidean profile. Finally, we preclude the possible of such minimal blow-up solution by using the stability theorem directly.

\vspace{2ex}\textbf{Organization of the paper}. In Section 2, we collect the definition and properties for some type of function space and Strichartz estimate. In Section 3, we first prove the bilinear Strichartz estimates in $U^p$ and $V^p$ spaces. Then we show the local well-posedness and stability theory for \eqref{cubic-intro} in the critical space. In Section 4, we establish the linear profile decomposition for bounded sequences in $H^\frac{1}{2}(\T^3)$ and construct the nonlinear profile decomposition. In Section 5, we prove the existence of the critical element and show the impossibility of the critical element which completes the proof of Theorem \ref{thm}. 
	\subsection{Notations}
	In this paper, we use $A\lesssim B$ to mean that there exists a constant such that $A\leqslant CB$, where the constant is not depending on $B$. We will also use $s+$ or $s-$, which means that there exists a small positive number $\varepsilon$ such that it is equal to $s+\varepsilon$ or $s-\varepsilon$ respectively. We denote $\langle x\rangle$ by $(1+\left|x\right|^2)^\frac{1}{2}$.
\section{Preliminaries}\label{sec-pre}
In this section, we recall the Strichartz estimates on torus and give the definition and the properties of the atomic-type spaces.  

\subsection{Function spaces and linear Strichartz estimates}
In this subsection, we introduce the $X^s$ and $Y^s$ spaces based on the atomic spaces $U^p$ and $V^p$. These atomic spaces were originally developed by Tataru and now is the standard target spaces to study  PDEs in critical spaces, see \cite{HadacKp22009,HerrT32011,HerrStri2014}. For convenience, we denote $\mathcal{H}$ the separable Hilbert space on $\C$ and also denote $\mathcal{Z}$ the set of finite partitions
$-\infty = t_0<t_1<\ldots<t_{K} = \infty$ on the real line. Furthermore, we set $v(\infty) := 0$ for any function $v:\R \rightarrow \mathcal{H}$.

We adapt the definition in \cite{HerrT32011} in the following:

\begin{definition}[$U^p$ space \cite{HerrT32011}]
	Let $1\leq p < \infty$ and functions $u: \R \rightarrow \mathcal{H}$. For sequence $\{t_k\}_{k=0}^K \in \mathcal{Z}$ and a sequence of functions $\{\phi_k\}_{k=0}^{K-1} \subset \mathcal{H}$ with $\sum_{k=0}^{K-1} \|\phi_k\|_{\mathcal{H}}^p = 1$ and $\phi_0 = 0$. A $U^p$-atom If a piecewise defined function $a : \mathbb{R} \to \mathcal{H}$ has the form
	\begin{equation*}
		a = \sum_{k=1}^K 1_{[t_{k-1}, t_k)} \phi_{k-1},
	\end{equation*}
	then we refer it as an $U^p$-atom. Then, we can define $U^p(\mathbb{R}, \mathcal{H})$ spaces as follows
	\begin{align*}
		u = \sum_{j=1}^{\infty} \lambda_{j} a_j,\qquad  \text{for}\hspace{1ex} a_j \hspace{1ex} \text{are}\hspace{1ex}   U^p\hspace{1ex} \text{-atoms}\  ,\quad \{\lambda_j\}_j \in \ell^1,\quad \|u\|_{U^p}<\infty,
	\end{align*}
	where
	\begin{align*}
		\|u\|_{U^p} : = \inf \bigg\{\sum_{j=1}^{\infty} |\lambda_j| : u = \sum_{j=1}^\infty \lambda_j a_j,\  \lambda_j \in \mathbb{C}\ \text{and}\hspace{1ex} a_j \hspace{1ex} \text{an}\hspace{1ex} U^p\ \text{atom}\bigg\}.
	\end{align*}
	The notation $1_{I}$ denotes the indicator function over the time interval $I$.
\end{definition}

\begin{definition}[$V^p$ spaces \cite{HerrT32011}]
	Let $1\leq p < \infty$ and functions $v:\R\rightarrow \mathcal{H}$. We also further assume that $v(\infty):=0$ and $v(-\infty) := \lim_{t\to -\infty} v(t)$ exists. Then, we define $V^p(\R,\mathcal{H})$ space such that
	\begin{align*}
		\|v\|_{V^p} : = \sup_{\{t_k\}_{k=0}^K\in \mathcal{Z}} \bigg(\sum_{k=1}^K \|v(t_k)-v(t_{k-1})\|^p_\mathcal{H}\bigg)^\frac{1}{p},
	\end{align*} 
	is empty.
	Likewise, we denote $V_{-}^p$ the closed subspace of all $v \in V^p$ with $\lim_{t\to -\infty}v(t) = 0$. We also denote $V_{-, rc}^p$ means all right-continuous $V_{-}^p$ functions.
\end{definition}

\begin{proposition}[Embeding properties \cite{HerrT32011}]\label{fml-pre-emb}
	For $1\leq p \leq q < \infty$, we have
	\begin{equation}
		U^p(\mathbb{R}, \mathcal{H}) \hookrightarrow U^q(\mathbb{R}, \mathcal{H}) \hookrightarrow  L^{\infty}(\mathbb{R},\mathcal{H}),
	\end{equation}
	and
	\begin{equation}
		U^p(\mathbb{R}, \mathcal{H}) \hookrightarrow V^p_{-,rc} (\mathbb{R}, \mathcal{H}) \hookrightarrow U^q(\mathbb{R}, \mathcal{H}).
	\end{equation}
\end{proposition}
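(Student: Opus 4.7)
The proposition collects four standard embeddings from the Koch--Tataru / Hadac--Herr--Koch theory of $U^p$ and $V^p$ spaces. The three inclusions $U^p\hookrightarrow U^q$, $U^q\hookrightarrow L^\infty$, and $U^p\hookrightarrow V^p_{-,rc}$ are elementary and reduce to working at the level of atoms and then using the triangle inequality on an atomic decomposition; the embedding $V^p_{-,rc}\hookrightarrow U^q$ for $p<q$ is substantially deeper and is the one that needs real work. My plan is to dispatch the three easy embeddings first and then construct an atomic decomposition of a $V^p$ function via a dyadic stopping-time argument.

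For $U^p\hookrightarrow U^q$ I would observe that if $a=\sum_{k=1}^{K}\mathbf{1}_{[t_{k-1},t_k)}\phi_{k-1}$ is a $U^p$-atom, then $\|\phi_{k-1}\|_{\mathcal H}\le 1$ for every $k$ (since $\sum\|\phi_{k-1}\|_{\mathcal H}^{p}=1$), and therefore $\sum\|\phi_{k-1}\|_{\mathcal H}^{q}\le 1$, i.e.\ $a$ is itself a $U^q$-atom. Taking the infimum over atomic decompositions yields $\|u\|_{U^q}\le\|u\|_{U^p}$. For $U^q\hookrightarrow L^\infty$ one simply notes $\|a\|_{L^\infty}\le 1$ for each atom and sums. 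For $U^p\hookrightarrow V^p_{-,rc}$, right-continuity and the vanishing limit at $-\infty$ hold for every atom (the initial piece has value $\phi_0=0$) and extend to $U^p$ functions because the partial sums $\sum_{j\le N}\lambda_j a_j$ converge uniformly in $L^\infty$ by the previous embedding. The $V^p$-bound on an atom follows from a telescoping computation: any finer partition of $\mathbb R$ contributes differences $\phi_k-\phi_{k-1}$ at the break-points of $a$ together with a terminal jump $-\phi_{K-1}$ from the convention $v(\infty)=0$; the triangle inequality in $\ell^p$ combined with $\sum\|\phi_{k-1}\|_{\mathcal H}^{p}=1$ delivers $\|a\|_{V^p}\lesssim 1$.

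The main obstacle is the embedding $V^p_{-,rc}\hookrightarrow U^q$ for $p<q$. Given $v\in V^p_{-,rc}$ with $\|v\|_{V^p}=1$, I would build a sequence of piecewise-constant approximations $v_n$ via a stopping-time construction: set $t_0^{n}=-\infty$ and $t_{k+1}^{n}=\inf\{t>t_k^{n}:\|v(t)-v(t_k^{n})\|_{\mathcal H}\ge 2^{-n}\}$, using right-continuity to control the values at the stopping times, and let $v_n(t)=v(t_k^{n})$ on $[t_k^{n},t_{k+1}^{n})$. Since each increment of $v_n-v_{n-1}$ has $\mathcal H$-norm $\lesssim 2^{-n}$ and the number of stopping times at level $n$ is at most $2^{np}$ (apply the definition of $\|v\|_{V^p}$ to the partition $\{t_k^{n}\}$, each of whose jumps has norm $\ge 2^{-n}$), the function $v_n-v_{n-1}$ is $C_n$ times a $U^q$-atom with $|C_n|\lesssim 2^{-n}\bigl(2^{np}\bigr)^{1/q}=2^{-n(q-p)/q}$. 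Telescoping $v=v_0+\sum_{n\ge 1}(v_n-v_{n-1})$ in $L^\infty$ and using that $\sum_n 2^{-n(q-p)/q}<\infty$, which is exactly where the strict inequality $q>p$ enters, yields $\|v\|_{U^q}\lesssim_{p,q}\|v\|_{V^p}$.

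The delicate point of the stopping-time argument is verifying that the $\{t_k^{n}\}$ form a \emph{finite} partition in $\mathcal Z$, so that the $v_n$ genuinely give $U^q$-atoms after normalisation. This is where the $V^p$ hypothesis is essential: the lower bound $\|v(t_{k+1}^{n})-v(t_k^{n})\|_{\mathcal H}\ge 2^{-n}$ cannot be sustained at more than $2^{np}$ break-points without violating $\|v\|_{V^p}\le 1$, and right-continuity prevents accumulation of stopping times from below. Once this is established, the convergence of the telescoping series in $L^\infty$ identifies the limit with $v$ pointwise, completing the atomic decomposition and hence the embedding.
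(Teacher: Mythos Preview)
The paper does not prove this proposition at all; it is stated with a citation to \cite{HerrT32011} and no argument is given. Your proposal therefore goes well beyond what the paper does: you have reproduced the standard Koch--Tataru/Hadac--Herr--Koch proof, and the sketch is correct. The three easy embeddings are handled exactly as in the literature, and your stopping-time construction for $V^p_{-,rc}\hookrightarrow U^q$ is the canonical one, with the geometric series $\sum_n 2^{-n(q-p)/q}$ correctly identified as the place where the strict inequality $p<q$ is needed. One small remark: the proposition as stated in the paper allows $p=q$, but the embedding $V^p_{-,rc}\hookrightarrow U^p$ is false in general; you are right to insist on $p<q$ for that inclusion, and the imprecision is in the paper's statement rather than in your argument.
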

 
\begin{definition}[$U^p_{\Delta}H^s$ and $V^p_{\Delta}H^s$ spaces \cite{HerrT32011}]
	For $s\in \mathbb{R}$, we define the $U^p_{\Delta}H^s$ as the space of all functions $u : \mathbb{R}\to H^s(\mathbb{T}^3)$ such that
	\begin{equation*}
		\|u\|_{U^p_{\Delta} H^s} := \|e^{-it\Delta}u(t)\|_{U^p(\mathbb{R},H^s)}.
	\end{equation*}
	Similarly, we can define the $V^p_{\Delta}H^s$ as the space of all functions $u : \mathbb{R}\to H^s(\mathbb{T}^3)$ such that
	\begin{equation*}
		\|u\|_{V^p_{\Delta} H^s} := \|e^{-it\Delta}u(t)\|_{V^p(\mathbb{R},H^s)}.
	\end{equation*} 
\end{definition}

We need the following embedding properties of the $U^p$ and $V^p$ spaces.
\begin{proposition}[Embedding, Proposition 2.2, Proposition 2.4 and Corollary 2.6 in \cite{HadacKp22009}]\label{prop-pre-Embed}
	For $1\leq p \leq q < \infty$,
	\begin{align*}
		U_{\Delta}^p(\R, L^2) \hookrightarrow V_{\Delta}^p (\R, L^2) \hookrightarrow U_{\Delta}^q(\R, L^2) \hookrightarrow  L^{\infty}(\R,L^2) .
	\end{align*}
\end{proposition}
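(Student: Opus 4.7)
The plan is to transfer the already-established abstract embedding result Proposition \ref{fml-pre-emb} to the $\Delta$-twisted spaces $U_\Delta^p L^2$ and $V_\Delta^p L^2$. The point is that the definitions of these spaces are tailored precisely so that the Schr\"odinger propagator $e^{-it\Delta}$ acts as an isometric bijection between, say, $U^p_\Delta H^s$ and $U^p(\R, H^s)$, and likewise for $V^p$. Since $e^{it\Delta}$ is unitary on $L^2(\T^3)$ for each fixed $t$, all norms of interest are preserved and no further Strichartz input is needed.

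Concretely, I would proceed in three steps. First, observe directly from the definitions that the map $u \mapsto e^{-it\Delta}u(t)$ takes $U^p_\Delta L^2$ isometrically onto $U^p(\R, L^2)$, and similarly for $V^p_\Delta L^2 \to V^p(\R, L^2)$. Second, apply Proposition \ref{fml-pre-emb} with the separable Hilbert space $\mathcal{H} = L^2(\T^3)$ to obtain the chain
\begin{equation*}
U^p(\R, L^2) \hook V^p_{-,rc}(\R, L^2) \hook U^q(\R, L^2) \hook L^\infty(\R, L^2)
\end{equation*}
on the abstract level. Third, pull everything back by applying $e^{it\Delta}$: the first three embeddings are immediate from isometry, while the last, $U^q_\Delta L^2 \hook L^\infty(\R, L^2)$, uses that $\norm{u(t)}_{L^2} = \norm{e^{-it\Delta}u(t)}_{L^2}$ pointwise in $t$, so the $L^\infty_t L^2_x$ norm of $u$ coincides with that of $e^{-it\Delta}u$, which in turn is dominated by the $U^q$ norm by Proposition \ref{fml-pre-emb}.

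There is no real obstacle here beyond bookkeeping. The only minor subtlety is the distinction between $V^p$ and the closed subspace $V^p_{-,rc}$ appearing in the abstract embedding; as is standard in this circle of ideas (and as in \cite{HerrT32011}), the intermediate embedding $V_\Delta^p L^2 \hook U_\Delta^q L^2$ is understood to be restricted to the right-continuous zero-limit subspace, which is the natural functional framework for the Picard iteration developed later in Section 3. Once this convention is fixed, the proof reduces to invoking the isometry property of $e^{-it\Delta}$ and quoting Proposition \ref{fml-pre-emb}.
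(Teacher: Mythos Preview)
Your proposal is correct and follows precisely the standard route: the paper does not give an independent proof but simply cites \cite{HadacKp22009}, and your argument---transferring the abstract embeddings of Proposition~\ref{fml-pre-emb} via the isometry $u\mapsto e^{-it\Delta}u(t)$---is exactly how one unwinds that citation. Your remark on the $V^p$ versus $V^p_{-,rc}$ convention is also the standard caveat and is handled correctly.
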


\begin{proposition}[Duality, Theorem 2.8 in \cite{HadacKp22009}]\label{prop-pre-dual}
	Let $DU_\Delta^p$ be the space of functions
	\begin{align*}
		DU_\Delta^p = \{(i\partial_t + \Delta) u : u \in U_\Delta^p \},
	\end{align*}
	and the $DU_\Delta^p = (V_\Delta^{p' })^{\ast}$, with $\frac{1}{p} + \frac{1}{p' } = 1$. Then for any interval $0 \in J$
	\begin{align*}
		\norm{\int_0^t e^{i(t-t' )\Delta}F(u)(t' )\, dt' }_{U_\Delta^p (J\times \R^d)} \lesssim \sup\bracket{\int_J \inner{ v, F}  \,  dt: \norm{v}_{V_\Delta^{p' } }=1 }.
	\end{align*}
\end{proposition}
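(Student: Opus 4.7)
The plan is to first establish the abstract duality $(U^p)^*\cong V^{p'}_{-,rc}$ for $\mathcal H$-valued functions, then transfer it to the Schr\"odinger-adapted spaces by conjugation with the unitary group $e^{it\Delta}$, and finally deduce the Duhamel bound by identifying the abstract pairing of $e^{-it\Delta}w$ against $e^{-it\Delta}v$ as $\int_J\inner{F,v}\,dt$. The key structural observation throughout is that multiplication by $e^{-it\Delta}$ is an isometric isomorphism $U^p_\Delta H^s\to U^p(\R,H^s)$ and likewise for the $V^{p'}$-versions, so the duality on adapted spaces follows mechanically from the flat one.

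For the abstract duality, I would define on $U^p$-atoms $u=\sum_{k=1}^K 1_{[t_{k-1},t_k)}\phi_{k-1}$ and on $v\in V^{p'}$ the Riemann--Stieltjes bilinear form
\[
B(u,v):=\sum_{k=1}^K \inner{u(t_k)-u(t_{k-1}),\,v(t_{k-1})}_{\mathcal H}=-\sum_{k=1}^K \inner{\phi_{k-1},\,v(t_k)-v(t_{k-1})}_{\mathcal H},
\]
where summation by parts uses the conventions $\phi_0=0$ and $v(\infty)=0$. H\"older in $k$ with exponents $(p,p')$ gives $|B(u,v)|\le\|u\|_{U^p}\|v\|_{V^{p'}}$, and extending linearly along atomic decompositions yields $\|B(\cdot,v)\|_{(U^p)^*}\le\|v\|_{V^{p'}}$. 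For the reverse inclusion, given $\ell\in(U^p)^*$ I would set $\inner{v(t),\phi}_{\mathcal H}:=-\ell\bigl(1_{[t,\infty)}\phi\bigr)$, verify that $v\in V^{p'}_{-,rc}$ (using right-continuity of $t\mapsto 1_{[t,\infty)}\phi$ in $U^p$ and the convention $v(-\infty)=0$), and recover $\|v\|_{V^{p'}}\le\|\ell\|$ by testing $\ell$ against a $U^p$-atom built from an optimizing partition in the supremum defining the $V^{p'}$-norm.

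Transfer to adapted spaces and the Duhamel inequality are then essentially formal. The flat duality yields $(V^{p'}_\Delta H^s)^*\cong DU^p_\Delta H^s$ under the integral bracket $\inner{F,v}:=\int_\R\inner{F(t),v(t)}_{H^s}\,dt$. For the stated bound, set $w(t):=1_J(t)\int_0^t e^{i(t-t')\Delta}F(t')\,dt'$; the function $e^{-it\Delta}w$ is absolutely continuous on $J$ with derivative $e^{-it\Delta}F$ there, so the Stieltjes pairing of $e^{-it\Delta}w$ against any $e^{-it\Delta}v$ collapses by unitarity of $e^{-it\Delta}$ to $\int_J\inner{v(t),F(t)}\,dt$, and the duality norm characterization gives
\[
\|w\|_{U^p_\Delta H^s}\lesssim\sup_{\|v\|_{V^{p'}_\Delta H^s}=1}\Big|\int_J\inner{v(t),F(t)}\,dt\Big|.
\]
The main obstacle will be the bookkeeping inside the abstract duality: checking that $B$ is well-defined and independent of partition refinement, that the $v$ reconstructed from $\ell$ genuinely lands in the closed subspace $V^{p'}_{-,rc}$ matching the convention $v(\infty)=0$, and that no boundary contributions appear at $t=0$, at $\partial J$, or at $\pm\infty$ when converting the Stieltjes sum into the integral pairing. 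Once these are handled, H\"older on the atomic decomposition, density of step functions in $U^p$, and the elementary conjugation via $e^{-it\Delta}$ reduce the Duhamel estimate to a direct corollary.
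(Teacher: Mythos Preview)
The paper does not give its own proof of this proposition; it is quoted as a preliminary fact with a citation to Theorem~2.8 of Hadac--Herr--Koch, so there is no in-paper argument to compare against. Your sketch is essentially the standard route taken in that reference and is sound.

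One remark on the internal logic. You first establish $(U^p)^*\cong V^{p'}_{-,rc}$ as an isometric isomorphism, which is correct and is the content of the cited theorem. Midway through, however, you assert ``the flat duality yields $(V^{p'}_\Delta H^s)^*\cong DU^p_\Delta H^s$''. That is the \emph{opposite} direction of duality and does not follow from the first, since $U^p$ is not reflexive. Fortunately you do not actually need it: once $(U^p)^*\cong V^{p'}$ is known isometrically, the Hahn--Banach characterization of the norm gives
\[
\|w\|_{U^p}=\sup_{\ell\in(U^p)^*,\ \|\ell\|\le1}|\ell(w)|=\sup_{\|v\|_{V^{p'}}\le1}|B(w,v)|,
\]
and then your computation identifying $B(w,v)$ with $\int_J\langle F,v\rangle\,dt$ for the absolutely continuous Duhamel primitive $e^{-it\Delta}w$ finishes the job. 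So the argument is fine; just drop or rephrase the line about $(V^{p'})^*\cong DU^p$, which is a stronger and unnecessary claim.
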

We also require the following property, which can transfer the map from $L^2$ to $U_\Delta^p$. 
\begin{proposition}[Transfer Principle, Proposition 2.19 in \cite{HadacKp22009}]\label{prop-pre-transpri}
	Let 
	\begin{align*}
		T_0 : L^2 \times \dots \times L^2 \to L_{loc}^{1}
	\end{align*}
	be an m-linear operator. Assume that for some $1\leq p,\ q \leq \infty$
	\begin{align*}
		\norm{ T_0 (e^{it\Delta} \phi_1,\dots, e^{it\Delta} \phi_m)}_{L^p(\R, L_x^q)} \lesssim \prod_{i=1}^m\norm{\phi_i}_{L^2}.
	\end{align*}
	Then, there exists an extension $T : U_{\Delta}^p \times \dots \times U_{\Delta}^p \to L^p(\R, L_x^q)$ satisfying
	\begin{align*}
		\norm{T(u_1, \dots, u_m)}_{L^p(\R, L_x^q)} \lesssim \prod_{i=1}^m \norm{u_i}_{U_{\Delta}^p},
	\end{align*}
	and  such that $T(u_1, \dots , u_m) (t, \cdot) = T_0 (u_1(t), \dots , u_m(t))(\cdot)$, a.e.
\end{proposition}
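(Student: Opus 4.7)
The plan is to exploit the atomic structure of $U^p_\Delta$ together with the disjointness of partition intervals, which is the standard Tataru--Hadac--Herr strategy. By the definition $\|u_i\|_{U^p_\Delta} = \|e^{-it\Delta}u_i\|_{U^p}$, each $u_i$ admits, for any $\varepsilon>0$, an atomic decomposition $u_i = \sum_j \lambda^{(i)}_j b^{(i)}_j$ with $\sum_j |\lambda^{(i)}_j| \leq (1+\varepsilon)\|u_i\|_{U^p_\Delta}$ and each $b^{(i)}_j$ of the form
\[
b^{(i)}_j(t) = \sum_{k=1}^{K_{i,j}} \mathbf{1}_{[t^{(i,j)}_{k-1},\, t^{(i,j)}_k)}(t) \, e^{it\Delta}\phi^{(i,j)}_{k-1}, \qquad \sum_{k} \|\phi^{(i,j)}_{k-1}\|_{L^2}^p = 1.
\]
By multilinearity and the triangle inequality it therefore suffices to prove $\|T(b_1, \ldots, b_m)\|_{L^p(\R, L^q_x)} \lesssim 1$ whenever each $b_i$ is such an atom.

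For atoms, the natural definition is the pointwise formula $T(b_1, \ldots, b_m)(t) := T_0(b_1(t), \ldots, b_m(t))$, which is meaningful because at each time $t$ every $b_i(t)$ reduces to a single free evolution $e^{it\Delta}\phi^{(i)}_{k_i(t)-1}$. Let $\mathcal{R}$ denote the common refinement of the $m$ partitions, so the real line decomposes into finitely many disjoint intervals $I_{\vec{k}}$ indexed by $\vec{k}=(k_1,\dots,k_m)$ on each of which $b_i \equiv e^{it\Delta}\phi^{(i)}_{k_i-1}$ for every $i$. Using disjointness and the hypothesis applied on all of $\R$ to each $m$-tuple of free solutions (restriction to $I_{\vec{k}}$ only shrinks the $L^pL^q$ norm),
\begin{align*}
\|T(b_1,\ldots,b_m)\|_{L^pL^q}^p
&= \sum_{\vec{k}} \int_{I_{\vec{k}}} \|T_0(e^{it\Delta}\phi^{(1)}_{k_1-1},\ldots,e^{it\Delta}\phi^{(m)}_{k_m-1})(t)\|_{L^q_x}^p \, dt \\
&\lesssim \sum_{\vec{k}} \prod_{i=1}^m \|\phi^{(i)}_{k_i-1}\|_{L^2}^p
= \prod_{i=1}^m \sum_{k_i} \|\phi^{(i)}_{k_i-1}\|_{L^2}^p = 1,
\end{align*}
where the factorization exploits the Cartesian structure of the index set together with the atom normalization. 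Resumming the atomic expansions and letting $\varepsilon \to 0$ yields $\|T(u_1,\ldots,u_m)\|_{L^pL^q} \lesssim \prod_i \|u_i\|_{U^p_\Delta}$ on the dense subspace of finite atomic combinations.

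To finish I would extend $T$ to all of $U^p_\Delta \times \cdots \times U^p_\Delta$ by multilinear continuity, and verify that the extension still satisfies $T(u_1,\ldots,u_m)(t,\cdot) = T_0(u_1(t),\ldots,u_m(t))(\cdot)$ almost everywhere. This last identity follows from the embedding $U^p_\Delta \hookrightarrow L^\infty_t L^2_x$ (Proposition \ref{prop-pre-Embed}), which provides a.e.\ convergence of approximating atoms in $L^2$ at each time $t$, combined with the $m$-linear continuity of $T_0 : (L^2)^m \to L^1_{\text{loc}}$ that is available in practice. The main obstacle I expect is simply the notational bookkeeping in the middle step --- ensuring that the common refinement of $m$ partitions, the Cartesian product of index sets, and the atom normalization line up so that the tensor structure factorizes cleanly into $1$. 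The case $p=\infty$ requires only the trivial modification of replacing the $\ell^p$ sum by a supremum and introduces no new ideas.
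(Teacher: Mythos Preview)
The paper does not prove this proposition; it is quoted verbatim from Hadac--Herr--Koch and merely cited. Your argument is precisely the standard proof from that reference: reduce by atomic decomposition and multilinearity to a single $m$-tuple of $U^p_\Delta$-atoms, pass to the common refinement of the partitions, and use disjointness together with the hypothesis to sum. One cosmetic point: the identity
\[
\sum_{\vec k}\prod_{i=1}^m \|\phi^{(i)}_{k_i-1}\|_{L^2}^p \;=\; \prod_{i=1}^m \sum_{k_i}\|\phi^{(i)}_{k_i-1}\|_{L^2}^p
\]
is literally an equality only if $\vec k$ ranges over the full Cartesian product; since many $I_{\vec k}$ are empty (the common refinement has roughly $\sum_i K_i$ pieces, not $\prod_i K_i$), it is cleaner to write $\leq$ there, which is all you need. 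With that adjustment the proof is correct.
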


Now we define the following two function spaces which will be crucial in establishing the local and global well-posedness.
\begin{definition}[$X^s$ and $Y^s$ spaces in \cite{HerrT32011}]\label{def-pre-Xs-Ys}
	For $s\in \mathbb{R}$, we define $X^s$ and $Y^s$ the spaces of all functions $u : \mathbb{R}\to H^s(\mathbb{T}^3)$ such that for every $n\in \mathbb{Z}^3$, the map $t\mapsto e^{it|n|^2}\widehat{u(t)}(n)$ is in $U^2(\mathbb{R}, \mathbb{C})$ and $V_{rc}^2(\mathbb{R}, \mathbb{C})$, respectively, with the norm
	\begin{align*}
		\norm{u}_{X^s} : = \parenthese{\sum_{\xi \in \Z^d} \inner{\xi}^{2s} \norm{\widehat{e^{-it\Delta}u(t)}(\xi)}_{U^2}^2}^{\frac{1}{2}},\\
		\norm{u}_{Y^s} : = \parenthese{\sum_{\xi \in \Z^d} \inner{\xi}^{2s} \norm{\widehat{e^{-it\Delta}u(t)}(\xi)}_{V^2}^2}^{\frac{1}{2}}.
	\end{align*}
	
	For intervals $I \subset \R$, we define $X^s (I)$, $s \in \R$ as the restriction norms of $X^s$; thus
	\begin{align*}
		X^s(I) : = \left\{ u \in C(I,H^s) : \norm{u}_{X^s (I)} : = \sup_{\substack{J\subseteq\R\\|J|\leq1}}  \inf_{v|_J (t) = u|_J (t)} \norm{v}_{{X}^s} < \infty\right\} .
	\end{align*}
	We can define the spaces $Y^s(I)$ in a similar way. Therefore, we have the embedding relation
	\begin{equation}\label{fml-pre-Embed-XY}
		U^2_{\Delta}H^s \hookrightarrow X^s \hookrightarrow Y^s \hookrightarrow V^2_{\Delta}H^s.
	\end{equation} 
	
\end{definition}

\begin{proposition}[\cite{HadacKp22009}]\label{prop-pre-Xs-Hs}
	Suppose $u :=  e^{it\Delta}\phi$, then for any $T>0$ we have
	\begin{equation*}
		\|u\|_{X^s([0,T])} \leq \|\phi\|_{H^s}.
	\end{equation*}
\end{proposition}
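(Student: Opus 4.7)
The plan is to exploit the fact that for $u=e^{it\Delta}\phi$ the twisted function $e^{-it\Delta}u(t)=\phi$ is constant in $t$, so on the Fourier side every coefficient $\widehat{e^{-it\Delta}u(t)}(\xi)=\hat\phi(\xi)$ is a time-independent scalar. By the definition of the restriction norm it suffices to show that for any $J\subset\R$ with $|J|\le 1$ there exists an extension $v$ of $u|_J$ satisfying $\|v\|_{X^s}\le\|\phi\|_{H^s}$. The subtle point is that one cannot simply take $v=u$ on all of $\R$: the convention $v(\infty)=0$ built into the $U^p$-atom definition forces a nonzero constant function to have infinite $U^2$ norm, so a one-sided cutoff past $J$ is needed.

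Given $J$, set $t_*:=\inf J$ and take the extension $v(t):=1_{[t_*,\infty)}(t)\,e^{it\Delta}\phi$, which clearly coincides with $u$ on $J$. A direct computation gives
$$\widehat{e^{-it\Delta}v(t)}(\xi)=1_{[t_*,\infty)}(t)\,\hat\phi(\xi).$$
For each fixed $\xi\in\Z^3$ this is precisely $|\hat\phi(\xi)|$ times a single $U^2$-atom: apply the atom definition with $K=2$, $t_0=-\infty$, $t_1=t_*$, $t_2=\infty$, $\phi_0=0$, and $\phi_1=\hat\phi(\xi)/|\hat\phi(\xi)|$, so that $\sum_{k=0}^{K-1}\|\phi_k\|^2=1$. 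Hence $\|\widehat{e^{-it\Delta}v(t)}(\xi)\|_{U^2}\le|\hat\phi(\xi)|$.

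Squaring, weighting by $\langle\xi\rangle^{2s}$, summing over $\xi\in\Z^3$, and applying Plancherel closes the estimate:
$$\|v\|_{X^s}^2\le\sum_{\xi\in\Z^3}\langle\xi\rangle^{2s}|\hat\phi(\xi)|^2=\|\phi\|_{H^s}^2.$$
Taking the infimum over admissible extensions $v$ and then the supremum over $J$ in the definition of $\|u\|_{X^s(I)}$ yields the claim. There is no real obstacle in this argument; the only point requiring care is the observation that the constant-in-time Fourier coefficient, once truncated on one side, is literally a single $U^2$-atom scaled by its amplitude, which is exactly what converts the $H^s$ bound on $\phi$ into the desired $X^s$ bound on $u$.
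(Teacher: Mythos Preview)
Your proof is correct and is essentially the standard argument behind this fact (the paper itself does not give a proof, merely citing \cite{HadacKp22009}). The key observation---that a one-sided truncation of a constant-in-time Fourier coefficient is exactly a rescaled $U^2$-atom---is precisely what is needed, and your care about the convention $v(\infty)=0$ forcing the truncation is on point.
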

We denote the norm $N(I)$ by the $X^\frac{1}{2}$-norm of the Duhamel term.
\begin{definition}[Nonlinear norm $N(I)$]\label{def-pre-Nnorm}
	Let $I=[0, T]$, we define the nonlinear norm $N(I)$ as
	\begin{equation*}
		\|f\|_{N(I)} := \left\| \int_0^t e^{i(t-t')\Delta} f(t') dt'\right\|_{X^{\frac{1}{2}}(I)}.
	\end{equation*}
\end{definition}

We also need a critical $Z$-norm, which will play a similar role as $L^5_{t,x}$. 
\begin{definition}[$Z$-norm]\label{def-pre-Z}
	\begin{align*}
		\norm{u}_{Z(I)} : = \sup_{J \subset I , \abs{J} \leq 1} \parenthese{ \sum_{N\in2^{\Bbb Z}} N^{5-p} \norm{P_N u(t)}_{L_{t,x}^p (J \times \T^3)}^p}^{\frac{1}{p}} .
	\end{align*}
	where $p$ will be chosen in the proof of refined Strichartz estimates(Lemma \ref{lem-ep-Ref-Stri}).
\end{definition}

We can find that $Z$-norm is weaker than the critical norm $X^{\frac{1}{2}}$:
\begin{remark}\label{rmk Z<X}
	Moreover,
	\begin{align*}
		\norm{u}_{Z(I) } \lesssim \norm{u}_{X^{\frac{1}{2}}(I)},
	\end{align*}
	thus $Z$ is indeed a weaker norm.  In fact, using Definition \ref{def-pre-Z} and Strichartz estimates below, we see that
	\begin{align*}
		\sup_{J \subset I , \abs{J} \leq 1} \parenthese{ \sum_N N^{5 -p} \norm{P_N u(t)}_{L_{t,x}^p (J \times \T^3)}^p}^{\frac{1}{p}}  \lesssim \sup_{J \subset I , \abs{J} \leq 1} \parenthese{ \sum_N N^{\frac{p}{2} }  \norm{P_N u(t)}_{X^{0} (J \times \T^3)}^p}^{\frac{1}{p}}  \lesssim \norm{u}_{X^{\frac{1}{2}}(I)}.
	\end{align*}
\end{remark}

\subsection{Linear Strichartz estimates for frequency-localized data}
In this part, we collect the linear Strichartz estimates in certain function spaces  with the frequency-localized initial data. 
\begin{proposition}[Strichartz estimates \cite{Bour1993,BD}]
	\label{prop-pre-Stri}
	If $p>\frac{10}{3}$, then
	\begin{equation*}
		\|P_N e^{it\Delta} f\|_{L^p_{t,x}([-1,1]\times\T^3)} \lesssim_p
		N^{\frac{3}{2}-\frac{5}{p}} \|f\|_{L^2_x}
	\end{equation*}
	and
	\begin{equation*}
		\|P_C e^{it\Delta} f\|_{L^p_{t,x}([-1,1]\times\T^3)} \lesssim_p
		N^{\frac{3}{2}-\frac{5}{p}} \|f\|_{L^2_x}
	\end{equation*}
	where $C$ is a cube of side length N and $f\in L^2(\T^3)$.
\end{proposition}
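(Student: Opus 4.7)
The plan is to deduce this as a direct consequence of the $\ell^2$-decoupling theorem of Bourgain--Demeter for the paraboloid in three spatial dimensions, which is the modern route to the sharp (no $\varepsilon$-loss) range $p>10/3$. First I would perform a parabolic rescaling $(t,x)\mapsto (N^{-2}t,N^{-1}x)$ to transfer the problem from frequencies of size $N$ on the short time interval $[-1,1]\times\T^3$ to frequencies of size $\sim 1$ on a space-time cylinder of radius $\sim N^2$. In the rescaled picture, $P_N e^{it\Delta}f$ becomes an extension operator for the unit paraboloid evaluated on a $\frac{1}{N}$-lattice, and the spatial periodicity of $\T^3$ is encoded as a translation invariance that allows us to localize to a single ball of radius $N^2$ in $\R^4$.

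Second, I would invoke $\ell^2$-decoupling at the critical exponent $p_c=2(d+2)/d=10/3$ (with $d=3$), which decouples the $N^{-1}$-caps of the unit paraboloid with a loss of $N^\varepsilon$. On each cap, the corresponding piece of the rescaled solution is a single exponential, so a trivial per-cap $L^p$ estimate combined with Plancherel in the cap index and Bernstein's inequality produces the bound $N^{\frac{3}{2}-\frac{5}{p}+\varepsilon}\|f\|_{L^2}$ at $p=p_c$. Undoing the rescaling then yields the claimed estimate at $p=10/3$ with an $\varepsilon$-loss.

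Third, to upgrade to the strict inequality $p>10/3$ without $\varepsilon$-loss, I would interpolate the endpoint estimate with the trivial Bernstein bound
\[
\|P_N e^{it\Delta}f\|_{L^\infty_{t,x}}\lesssim N^{\frac{3}{2}}\|f\|_{L^2},
\]
which absorbs the $N^\varepsilon$ once we spend an arbitrarily small amount of the $L^p$-exponent. The cube version with $P_C$ for $C$ a cube of side $N$ at arbitrary location is reduced to the ball version by a Galilean transformation $f(x)\mapsto e^{i\xi_0\cdot x}f(x)$ centering the cube at the origin, under which the Schr\"odinger flow picks up only a spatial translation and a phase; these leave the $L^p_{t,x}$ norm invariant.

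The main obstacle is that the entire argument rests on the $\ell^2$-decoupling theorem, which is itself a deep theorem; the rescaling, Galilean reduction, and endpoint interpolation that I outlined are essentially routine once decoupling is taken as a black box. Everything else in the argument is soft harmonic analysis.
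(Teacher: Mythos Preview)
Your outline is correct and follows the standard route via Bourgain--Demeter decoupling: parabolic rescaling to unit frequencies on a large ball, $\ell^2$-decoupling at the endpoint $p_c=10/3$ with an $N^\varepsilon$ loss, interpolation against the trivial $L^\infty$ bound to remove the $\varepsilon$ for $p>10/3$, and a Galilean shift to handle off-center cubes. Note, however, that the paper does not give its own proof of this proposition at all --- it is stated as a black box with citations to \cite{Bour1993,BD}, so there is no in-paper argument to compare against. Your sketch is precisely the content of those references (in particular the second), so in effect you have reproduced the proof the paper defers to.
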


By the transfer principle (Proposition \ref{prop-pre-transpri}) and Strichartz  estimate,
Lemma \ref{prop-pre-Stri}, we obtain the following corollary:
\begin{corollary}\label{cor-strichartz}
	If $p>\frac{10}{3}$, for any $v\in U^p_\Delta([-1, 1])$,
	\[
	\|P_{\le N} v\|_{L_{t,x}^p([-1,1]\times\T^3)} \lesssim_p N^{\frac{3}{2}-\frac{5}{p}}
	\|P_{\le N}v\|_{U^p_\Delta L^2}\lesssim \|P_{\le N}v\|_{Y^0([0,1])},
	\]
	and
	\[
	\|P_C v\|_{L^p([-1,1]\times\T^3)} \lesssim_p N^{\frac{3}{2}-\frac{5}{p}}
	\|v\|_{U^p_\Delta([-1, 1])},
	\]
	where $C$ is a cube of side length $N$.
\end{corollary}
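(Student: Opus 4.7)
The plan is to derive both displayed inequalities as a direct application of the transfer principle (Proposition \ref{prop-pre-transpri}) to the Euclidean Strichartz estimates of Proposition \ref{prop-pre-Stri}, combined with the atomic-space embeddings in Proposition \ref{prop-pre-Embed} and \eqref{fml-pre-Embed-XY}. The essential point is that for $p>\tfrac{10}{3}$ the exponent $\tfrac{3}{2}-\tfrac{5}{p}$ is strictly positive, which allows a dyadic-to-ball upgrade without losing powers of $N$.

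First I would upgrade the Littlewood--Paley Strichartz bound to its sub-dyadic (ball) form on the linear evolution. Writing $P_{\leq N}=\sum_{M\leq N}P_M$ dyadically and applying Minkowski together with Proposition \ref{prop-pre-Stri},
\begin{equation*}
\|P_{\leq N} e^{it\Delta} f\|_{L^p_{t,x}([-1,1]\times\T^3)} \lesssim \sum_{M\leq N} M^{\frac{3}{2}-\frac{5}{p}} \|P_M f\|_{L^2} \lesssim N^{\frac{3}{2}-\frac{5}{p}} \|f\|_{L^2},
\end{equation*}
where the last step uses that the geometric sum in $M$ is dominated by its largest term (equivalently, one applies Cauchy--Schwarz in $M$ with the weight $M^{2(3/2-5/p)}$ and uses Plancherel). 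I would then invoke Proposition \ref{prop-pre-transpri} with the single-argument linear map $T_0(\phi)=P_{\leq N}\phi$; this lifts the above bound to $\|P_{\leq N} v\|_{L^p_{t,x}}\lesssim N^{\frac{3}{2}-\frac{5}{p}}\|v\|_{U^p_\Delta L^2}$, and since $P_{\leq N}$ commutes with $e^{it\Delta}$ I may harmlessly replace $v$ by $P_{\leq N}v$ on the right, yielding the first claimed inequality.

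For the second inequality of the first display I would chain the embeddings already recorded: from \eqref{fml-pre-Embed-XY} one has $Y^0\hookrightarrow V^2_\Delta L^2$, and from Proposition \ref{prop-pre-Embed} the bound $V^2_\Delta L^2\hookrightarrow U^p_\Delta L^2$ holds for any $p\geq 2$ (in particular for our $p>\tfrac{10}{3}$). Composing the two gives $\|P_{\leq N}v\|_{U^p_\Delta L^2}\lesssim\|P_{\leq N}v\|_{Y^0([0,1])}$, and restricting in time is standard for the restriction norms. The cube version $\|P_C v\|_{L^p}\lesssim N^{\frac{3}{2}-\frac{5}{p}}\|v\|_{U^p_\Delta([-1,1])}$ is entirely analogous: apply the transfer principle directly to $T_0(\phi)=P_C\phi$ using the cube-localized Strichartz bound in Proposition \ref{prop-pre-Stri}, without needing any dyadic summation.

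There is no real analytic obstacle here; the corollary is essentially a bookkeeping exercise. The only place to be careful is the dyadic summation at the start, which converges to the largest frequency precisely because $\tfrac{3}{2}-\tfrac{5}{p}>0$ when $p>\tfrac{10}{3}$; at the endpoint $p=\tfrac{10}{3}$ a logarithmic loss would appear. This matches the well-known fact that one must stay strictly above the critical Strichartz exponent when moving from $P_N$ to $P_{\leq N}$ through a direct triangle inequality.
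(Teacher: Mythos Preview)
Your proposal is correct and follows essentially the same route as the paper, which simply invokes the transfer principle (Proposition~\ref{prop-pre-transpri}) together with the Strichartz estimates (Proposition~\ref{prop-pre-Stri}) and the embedding $Y^0\hookrightarrow V^2_\Delta L^2\hookrightarrow U^p_\Delta L^2$. One small simplification: the dyadic summation you perform to pass from $P_M$ to $P_{\leq N}$ is unnecessary, since the second estimate in Proposition~\ref{prop-pre-Stri} already handles projections to arbitrary cubes of side length $N$, and $P_{\leq N}$ is such a projector centered at the origin; thus the $P_{\leq N}$ bound on the linear flow is immediate without summing.
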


We also need the following type Strichartz estimate, which is crucial in the proof of bilinear estimates in atomic space.

Let $\mathcal{R}_M (N)$ be the collection of all sets in $\Z^3$ which are given as the intersection of a cube of side length $2N$ with strips of width $2M$, that is the collection of all sets of the form
\begin{align*}
	(\xi_0 + [-N, N]^3) \cap [\xi \in \Z^2 : \abs{a \cdot \xi -A} \leq M]
\end{align*}
where $\xi_0 \in \Z^2$, $a =\xi_0 |\xi_0|^{-1} $ and $A \in \R$.
\begin{proposition}\label{prop Strichartz L^infty}
	For all $1 \leq M \leq N$ and $R \in \mathcal{R}_M (N)$, we have
	\begin{align*}
		\norm{P_R e^{it\Delta} \phi}_{L_{t,x}^{\infty} (\T \times \T^3)} \lesssim M^{\frac{1}{2}} N \norm{P_R \phi}_{L_x^2 (\T^3)}.
	\end{align*}
\end{proposition}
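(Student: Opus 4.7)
The plan is to reduce the estimate to the Bernstein inequality on the torus together with the $L^2$ isometry property of $e^{it\Delta}$, so no oscillatory integral arguments are needed.

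First I would observe that for each fixed time $t$, the function $P_R e^{it\Delta}\phi$ has Fourier support contained in the finite set $R \subset \Z^3$, because the multiplier $e^{-it|\xi|^2}$ does not enlarge frequency support. Applying Cauchy--Schwarz in the Fourier series
\[
(P_R e^{it\Delta}\phi)(x) = \sum_{\xi \in R} e^{-it|\xi|^2}\,\widehat{\phi}(\xi)\,e^{i\xi\cdot x},
\]
yields the standard Bernstein-type bound
\[
\|P_R e^{it\Delta}\phi\|_{L_x^\infty(\T^3)} \lesssim (\#R)^{\frac{1}{2}}\,\|P_R e^{it\Delta}\phi\|_{L_x^2(\T^3)} = (\#R)^{\frac{1}{2}}\,\|P_R\phi\|_{L_x^2(\T^3)},
\]
using that $e^{it\Delta}$ is a unitary on $L^2$.

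Next I would count lattice points in $R$. By definition $R$ is contained in the intersection of a cube of side length $2N$ with a slab of width $2M$ perpendicular to a unit direction $a$. The Euclidean volume of such an intersection is at most $C M N^2$ (the cross-sectional area orthogonal to $a$ is $\lesssim N^2$ and the thickness in the $a$-direction is $\lesssim M$). Since $M,N \geq 1$, a routine lattice-point count gives $\#R \lesssim M N^2$, so $(\#R)^{1/2} \lesssim M^{1/2} N$.

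Combining the two bounds produces
\[
\|P_R e^{it\Delta}\phi\|_{L_x^\infty(\T^3)} \lesssim M^{\frac{1}{2}} N\,\|P_R\phi\|_{L_x^2(\T^3)},
\]
uniformly in $t$, and taking the supremum over $t \in \T$ yields the claim. There is no real obstacle here: the only point requiring mild care is verifying that the lattice-point count $\#R \lesssim M N^2$ holds uniformly for all admissible $\xi_0$, $a$ and $A$, which follows from covering the slab-cube intersection by $\lesssim M N^2$ unit cubes (or equivalently from the fact that the number of integer points in a convex body is comparable to its volume plus a lower-order surface term, both of which are dominated by $MN^2$ under $1\le M\le N$).
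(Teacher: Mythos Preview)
Your proposal is correct and follows essentially the same route as the paper: fix $t$, apply Cauchy--Schwarz on the Fourier side to obtain the factor $(\#R)^{1/2}$, and then bound $\#R \lesssim MN^2$. The only cosmetic difference is that the paper counts lattice points by covering the slab--cube intersection with $\lesssim N^2/M^2$ cubes of side $M$ (each containing $\lesssim M^3$ points), whereas you invoke a volume-plus-surface argument; both yield the same bound under the hypothesis $1\le M\le N$.
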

\begin{proof}
	For the fixed  time $t$, we have the following bound
	\begin{align*}
		\norm{P_R e^{it\Delta} \phi}_{L_{t,x}^{\infty} (\T^3)} \leq \sum_{\xi \in \Z^2 \cap R} \abs{\widehat{\phi} (\xi)} \leq (\# (\Z^3 \cap R))^{\frac{1}{2}} \norm{P_R \phi}_{L_x^2 (\T^3)}.
	\end{align*}
	Then the rectangle contains at most $\frac{N^2}{M^2}$ almost disjoint cubes of side length $M$, and each cube contains approximately $M^3$ lattice points. Hence, we obtain the upper bound
	\begin{align*}
		\# (\Z^2 \cap R) \lesssim MN^2 .
	\end{align*}
	So far, we have finished the proof.
\end{proof}

\begin{corollary}\label{cor-pre-Stri-M}
	Let $p > \frac{10}{3}$ and $0 < \kappa < \frac{1}{2} - \frac{5}{3p}$. For all $1 \leq M \leq N$ and $R \in \mathcal{R}_M (N)$ we have 
	\begin{align*}
		\norm{P_R e^{it\Delta} \phi}_{L_{t,x}^p (\T \times \T^3)} \lesssim N^{\frac{3}{2}-\frac{5}{p}} \bigg(\frac{M}{N}\bigg)^{\kappa} \norm{P_R \phi}_{ L_x^2 (\T^3)}.
	\end{align*}
\end{corollary}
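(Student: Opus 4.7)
The statement is the interpolation consequence of the two Strichartz-type bounds already established: the cube-localized $L^q_{t,x}$ estimate of Proposition \ref{prop-pre-Stri} and the $L^\infty_{t,x}$ bound of Proposition \ref{prop Strichartz L^infty}. My plan is simply to interpolate them in the spatial--temporal $L^p$ scale and then optimize over the auxiliary exponent.

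Concretely, fix any $q$ with $10/3 < q \leq p$. Since the set $R \in \mathcal{R}_M(N)$ is contained in a cube of side length $2N$, the cube version of Proposition \ref{prop-pre-Stri} (applied to $P_R \phi$, which is also supported in that cube) yields
\[
\|P_R e^{it\Delta}\phi\|_{L^q_{t,x}(\T\times\T^3)} \lesssim N^{\frac{3}{2}-\frac{5}{q}} \|P_R \phi\|_{L^2_x(\T^3)},
\]
while Proposition \ref{prop Strichartz L^infty} gives
\[
\|P_R e^{it\Delta}\phi\|_{L^\infty_{t,x}(\T\times\T^3)} \lesssim M^{\frac{1}{2}} N \|P_R \phi\|_{L^2_x(\T^3)}.
\]
H\"older's inequality (equivalently, log-convexity of $L^p$ norms on the finite measure space $\T\times\T^3$) with $\theta = q/p \in (0,1]$ so that $\frac{1}{p} = \frac{\theta}{q}$ produces
\[
\|P_R e^{it\Delta}\phi\|_{L^p_{t,x}} \leq \|P_R e^{it\Delta}\phi\|_{L^q_{t,x}}^{\theta} \|P_R e^{it\Delta}\phi\|_{L^\infty_{t,x}}^{1-\theta}.
\]

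Plugging in and simplifying, the right-hand side becomes a constant times $N^{\alpha} M^{\beta} \|P_R \phi\|_{L^2}$, where
\[
\beta = \frac{1}{2}\left(1-\frac{q}{p}\right), \qquad \alpha = \left(\frac{3}{2}-\frac{5}{q}\right)\frac{q}{p} + \left(1-\frac{q}{p}\right) = \frac{3}{2}-\frac{5}{p} - \beta.
\]
So writing $\kappa := \beta = \tfrac{1}{2} - \tfrac{q}{2p}$, the resulting estimate is exactly of the form $N^{\frac{3}{2}-\frac{5}{p}}(M/N)^{\kappa} \|P_R \phi\|_{L^2}$ claimed in the corollary. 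As $q$ ranges over $(10/3, p]$, the parameter $\kappa$ ranges over $[0, \tfrac{1}{2}-\tfrac{5}{3p})$, so for every $0<\kappa<\tfrac{1}{2}-\tfrac{5}{3p}$ we can choose $q$ accordingly, which covers the full range in the statement.

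There is no real obstacle: once the two endpoint bounds are in hand, this is just H\"older interpolation and solving the linear system in the exponents. The only minor point worth stating cleanly in the write-up is that $R \subset \xi_0 + [-N,N]^3$ legitimately falls under the cube hypothesis of Proposition \ref{prop-pre-Stri}, so no extra frequency localization argument is needed before interpolating.
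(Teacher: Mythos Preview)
Your proof is correct and is exactly the interpolation argument the paper intends: the corollary is stated immediately after Proposition~\ref{prop Strichartz L^infty} and Proposition~\ref{prop-pre-Stri} with no separate proof, precisely because it follows by H\"older interpolation between the $L^q_{t,x}$ cube estimate and the $L^\infty_{t,x}$ strip estimate, as you carried out. The exponent bookkeeping is accurate and the range of $\kappa$ matches.
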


\section{Local well-posedness and stability theory}
To obtain local well-posedness in critical space with large initial data, we need to establish a trilinear estimate to deal with nonlinearity $|u|^2u$. To get this, we introduce a frequency localized bilinear estimate first
\begin{lemma}[Bilinear estimates]\label{lem-bilinear}
	Assuming $|I|$ is a bounded interval and dyadic numbers $N_1\geq N_2 \ge 1$, then we get that
	\begin{equation}
		\|P_{N_1} u_1 P_{N_2} u_2\|_{L^2_{x,t}(\T^3\times I)}
		\lesssim N_2^{\frac{1}{2}} (\frac{N_2}{N_1}+\frac{1}{N_2})^\kappa \|P_{N_1} u_1\|_{Y^0(I)}
		\|P_{N_2}u_2\|_{Y^0(I)}
	\end{equation}
	for some $\kappa >0$.
\end{lemma}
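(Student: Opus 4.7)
The plan is to reduce the estimate to a bilinear Strichartz bound for free evolutions via the transfer principle, then establish the linear bound through a Herr--Tataru--Tzvetkov style slab decomposition. Since the $Y^0(I)$-norm in Definition~\ref{def-pre-Xs-Ys} is built from a supremum over unit-length time subintervals, I may restrict attention to a subinterval $J \subseteq I$ with $|J| \leq 1$. Chaining the embedding $Y^0 \hookrightarrow V^2_{\Delta} L^2 \hookrightarrow U^p_{\Delta} L^2$ from Proposition~\ref{prop-pre-Embed} and \eqref{fml-pre-Embed-XY} (valid for every $p > 2$) together with the transfer principle (Proposition~\ref{prop-pre-transpri}) applied in each slot of the bilinear form, matters reduce to proving, for $\phi_j \in L^2(\T^3)$ frequency-localized at $N_j$,
\begin{equation*}
\|P_{N_1}e^{it\Delta}\phi_1 \cdot P_{N_2}e^{it\Delta}\phi_2\|_{L^2_{t,x}(J\times\T^3)} \lesssim N_2^{\frac{1}{2}}\Big(\frac{N_2}{N_1}+\frac{1}{N_2}\Big)^{\kappa} \|\phi_1\|_{L^2}\|\phi_2\|_{L^2},
\end{equation*}
at the cost of an $\varepsilon$-loss from the embedding that will be absorbed into a slightly smaller $\kappa$.

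For the linear estimate I would pick a scale $M$ (chosen to be either $N_2$ or $1$, each producing one of the two summands in the final factor) and decompose the higher-frequency factor $P_{N_1}\phi_1 = \sum_{R \in \mathcal{R}_M(N_1)} P_R \phi_1$ into slabs. The spatial Fourier support of each product $P_R e^{it\Delta}\phi_1 \cdot P_{N_2}e^{it\Delta}\phi_2$ is contained in a translate of $R$ thickened by $O(N_2)$. In the regime $M \sim N_2$ this thickening is of the same order as the slab width, so only $O(1)$ choices of $R'$ yield an overlapping output support for each fixed $R$, and almost-orthogonality gives
\begin{equation*}
\|P_{N_1}u_1 \cdot P_{N_2}u_2\|_{L^2_{t,x}}^2 \lesssim \sum_{R\in\mathcal{R}_M(N_1)} \|P_R u_1 \cdot P_{N_2}u_2\|_{L^2_{t,x}}^2.
\end{equation*}

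For each single slab I would apply H\"older in space-time with exponents $p,q$ satisfying $\frac{1}{p}+\frac{1}{q}=\frac{1}{2}$ and $p,q$ slightly larger than $10/3$, invoking Corollary~\ref{cor-pre-Stri-M} on the first factor (gaining $(M/N_1)^\kappa$) and Proposition~\ref{prop-pre-Stri} on the second. Along this admissible line the Strichartz exponents obey $(3/2-5/p)+(3/2-5/q)=1/2$, so the combined Strichartz prefactor is $N_1^{a}N_2^{\frac{1}{2}-a}$ with $a = 3/2-5/p$ small; Plancherel orthogonality $\sum_R \|P_R\phi_1\|_{L^2}^2 = \|P_{N_1}\phi_1\|_{L^2}^2$ then assembles the single-slab bounds, yielding the $(N_2/N_1)^\kappa$ term after taking $M = N_2$. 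The alternate $(1/N_2)^\kappa$ contribution arises from the choice $M = 1$, using Proposition~\ref{prop Strichartz L^infty} interpolated against the $L^2$ conservation law in place of Corollary~\ref{cor-pre-Stri-M}; this route is advantageous precisely when $N_2 \lesssim N_1^{1/2}$.

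The main obstacle is the joint tuning of $(p,\kappa)$: Corollary~\ref{cor-pre-Stri-M} restricts $\kappa < \frac{1}{2}-\frac{5}{3p}$, a range that collapses to zero as $p \to (10/3)^+$, which is exactly the regime in which the $N_1$-loss $N_1^a$ is small. One must therefore balance these against each other so that a genuine positive power of $\min(M/N_1,\,1/N_2)$ survives in the final bound; a related subtlety is verifying the $O(1)$-overlap in the slab orthogonality when $M$ is small compared with $N_2$, which may require either refining the slab family or repartitioning the low-frequency factor before the H\"older step.
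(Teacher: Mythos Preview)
Your proposal contains a genuine gap: the spatial--orthogonality route does not close numerically, and the obstacle you flag at the end is fatal rather than merely technical.

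Concretely, with slabs $R\in\mathcal{R}_M(N_1)$ and $M=N_2$ (the minimal width compatible with your spatial almost--orthogonality), Corollary~\ref{cor-pre-Stri-M} on the high--frequency factor yields $N_1^{3/2-5/p}(N_2/N_1)^\kappa\|P_R\phi_1\|_{L^2}$, and Proposition~\ref{prop-pre-Stri} on the low factor yields $N_2^{3/2-5/q}\|\phi_2\|_{L^2}$. Since $(3/2-5/p)+(3/2-5/q)=1/2$, the product is $N_2^{1/2}(N_2/N_1)^{\kappa-a}$ with $a=3/2-5/p$. But the hypothesis of Corollary~\ref{cor-pre-Stri-M} forces $\kappa<1/2-5/(3p)$, and one computes $\bigl(1/2-5/(3p)\bigr)-\bigl(3/2-5/p\bigr)=-1+10/(3p)<0$ for every $p>10/3$. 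Hence $\kappa<a$ always, and no positive power of $N_2/N_1$ survives. The $M=1$ branch is worse: spatial orthogonality fails outright, as you acknowledge. The ``balancing'' you propose is therefore impossible along this line.

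What the paper does differently is twofold. First, it localizes $P_{N_1}u_1$ to cubes $C$ of side $N_2$ (so that subsequent Strichartz estimates on the high factor run at scale $N_2$, not $N_1$, eliminating the $N_1^a$ loss). Second, and crucially, it takes strip width $M=\max\{N_2^2/N_1,1\}\ll N_2$ and proves that the pieces $P_{R_k}v_1\cdot v_2$ are almost orthogonal in \emph{time frequency}: for $\xi_1\in R_k$ one has $|\xi_1|^2$ determined by $k$ up to $O(N_2^2)$, so the space--time Fourier supports separate in the $\tau$ variable even though the spatial supports overlap heavily after convolution with $\{|\xi|\sim N_2\}$. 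With $p=q=4$ the single--strip bound is then $N_2^{1/4}(M/N_2)^\kappa\cdot N_2^{1/4}$, giving exactly $N_2^{1/2}(N_2/N_1+1/N_2)^\kappa$ with some $\kappa\in(0,1/12)$. A secondary point: the passage from the $U^2_\Delta$ estimate (which is all the transfer principle gives for $L^2_{t,x}$ output) to $Y^0$ is not the embedding $V^2\hookrightarrow U^p$ you cite, but the $U^2$--$U^4$ interpolation of \cite[Proposition~2.20]{HadacKp22009}, which is why the paper separately proves the lossless $U^4$ bound~\eqref{fml-bilinear-U4}.
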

\begin{proof}
	We restrict $P_{N_1} u_1$ to a cube $C\in\mathcal{R}(N_2)$ with side-length $N_2$, and find that the support of $\widehat{P_{N_2} u}$ intersects with finite number of cubes $C$. Hence, by the almost orthogonality it is sufficient to prove
	\begin{equation}\label{fml-bilinear-cube}
		\|P_C P_{N_1} u_1 P_{N_2} u_2\|_{L^2_{x,t}(\T^3\times I)}
		\lesssim N_2^{\frac{1}{2}} (\frac{N_2}{N_1}+\frac{1}{N_2})^\kappa \|P_{N_1} u_1\|_{Y^0(I)}
		\|P_{N_2}u_2\|_{Y^0(I)}.
	\end{equation}
	Then, by the interpolation [\cite{HadacKp22009}, Proposition 2.20] we can find that $\eqref{fml-bilinear-cube}$ follows from the two estimates
	\begin{equation}\label{fml-bilinear-U2}
		\|P_C P_{N_1} u_1 P_{N_2} u_2\|_{L^2_{x,t}(\T^3\times I)} \lesssim N_2^{\frac{1}{2}} (\frac{N_2}{N_1}+\frac{1}{N_2})^{\kappa} \|P_{N_1} u_1\|_{U^2_{\Delta}(I) L^2}
		\|P_{N_2}u_2\|_{U^2_{\Delta}(I) L^2} ,
	\end{equation}
	and
	\begin{equation}\label{fml-bilinear-U4}
		\|P_C P_{N_1} u_1 P_{N_2} u_2\|_{L^2_{x,t}(\T^3\times I)} \lesssim N_2^{\frac{1}{2}} \|P_{N_1} u_1\|_{U^4_{\Delta}(I) L^2}
		\|P_{N_2}u_2\|_{U^4_{\Delta}(I) L^2} .
	\end{equation}
	Indeed, by H\"older's inequality and Strichartz estimate $\eqref{cor-strichartz}$, we get
	\begin{align*}
		\norm{P_C P_{N_1} u_1 P_{N_2} u_2 }_{L_{t,x}^2(I \times \T^3)} & \leq \norm{P_C P_{N_1} u_1}_{L_{t,x}^4(I \times \T^3)} \norm{ P_{N_2} u_2}_{L_{t,x}^4(I \times \T^3)}  \\
		& \lesssim N_2^{\frac{1}{4}} \norm{P_{N_1} u_1}_{U_{\Delta}^4(I) L^2} N_2^{\frac{1}{4}} \norm{ P_{N_2} u_2}_{U_{\Delta}^4(I) L^2} \\
		& = N_2^{\frac{1}{2}}\norm{P_{N_1} u_1}_{U_{\Delta}^4(I) L^2} \norm{ P_{N_2} u_2}_{U_{\Delta}^4(I) L^2},
	\end{align*}
	then $\eqref{fml-bilinear-U4}$ follows.
	
	It remains to prove $\eqref{fml-bilinear-U2}$. We only need to consider the case that $N_1 \gg N_2$, since $\eqref{fml-bilinear-U4}$ and the embedding relation $U^2_{\Delta}L^2 \hookrightarrow U^4_{\Delta}L^2$ implies $\eqref{fml-bilinear-U2}$ immediately. By Proposition \ref{prop-pre-transpri}, it is enough to prove the corresponding estimate for solutions to the linear equation:
	\begin{align}\label{eq Interpolation22}
		\norm{P_C e^{it\Delta} \phi_1 e^{it\Delta} \phi_2}_{L_{t,x}^2(I \times \T^3)}  \lesssim N_2^{\frac{1}{2}} \left( \frac{N_2}{N_1} + \frac{1}{N_2} \right)^{\kappa} \norm{\phi_1}_{ L^2 (\T^3)}  \norm{\phi_2}_{L^2 (\T^3)}.
	\end{align}
	The initial data $\phi_1,\phi_2$ are frequency localized
	\begin{equation*}
		supp \widehat{\phi}_1 \subset \{ |\xi|\sim N_1 \},\quad supp \widehat{\phi}_2 \subset \{ |\xi|\sim N_2 \}.
	\end{equation*}
	We extend $e^{it\Delta} \phi_j, j=1,2$ to the real line, and then consider a Schwartz function $\psi$ which is frequency localized in $[0,2\pi]$, to define that
	\begin{equation*}
		v_j(t) := \psi(t) e^{it\Delta} \phi_j,\quad j=1,2.
	\end{equation*}
	Then, we prove a stronger estimate 
	\begin{equation*}
		\|P_C u_1 u_2\|_{L^2(\R\times \T^3)} \lesssim N_2^{\frac{1}{2}}\bigg( \frac{N_2}{N_1} + \frac{1}{N_2} \bigg)^{\kappa}.
	\end{equation*}
	
	Let $\xi_0$ be the center of $C$, then we can find
	\begin{equation*}
		C := (\xi_0 + [-N_2,N_2]^3),
	\end{equation*}
	where $|\xi_0| \sim N_1$. We partition $C = \cup R_k$ into almost disjoint strips of width $M = \max \{ \frac{N_2^2 }{N_1} ,1 \}$, which are orthogonal to $\xi_0$
	\begin{equation*}
		C = \bigcup_{k\in\Z:|k|\simeq N_1/N_2} R_k
	\end{equation*}
	where
	\begin{align*}
		R_k = \{ \xi \in C : |\xi\cdot a - Mk - |\xi_0||\le M \}, \quad a=\xi_0|\xi_0|^{-1}.
	\end{align*}
	We decompose $P_C v_1\cdot v_2$ as follows 
	\begin{align*}
		P_C v_1 \cdot v_2 = \sum_{k\in\Z:|k|\simeq N_1/N_2} P_{R_k} P_{C} v_1 \cdot v_2
	\end{align*}
	and claim that the summands are almost orthogonal in $L^2 (\T \times \T^3)$. We will find that the orthogonality comes from the time frequency instead of the spacial frequency. Indeed, for $(\tau_1,\xi_1) \in supp \widehat{P_{R_k} v_1}$ we have $\xi_1 \in R_k$ and $|\tau_1 + \xi_1^2| \le 1$. Furthermore, basic computation yield
	\begin{align*}
		\xi_1^2 = (\xi_1\cdot a)^2 + |\xi_1 - \xi_0|^2 - ((\xi_1-\xi_0)\cdot a)^2 = M^2 k^2 + O(M^2 k),
	\end{align*}
	which implies
	\begin{equation*}
		|\tau_1 + M^2k^2| = |\tau_1 + |\xi_1|^2| + O(M^2 |k|) = O(M^2 |k|).
	\end{equation*}
	We conclude that the time frequency of $P_{R_k} v_1$ is supported in the interval 
	\begin{equation*}
		[ -M^2k^2 - cM^2 |k| , -M^2k^2 + cM^2 |k| ],
	\end{equation*} 
	where constant $c>0$. We can find the time frequency of $v_2$ is supported in the interval $[-N_2^2,N_2^2] \subset [-M^2|k|,M^2|k|]$. Therefore, the functions $\{P_{R_k}v_1\cdot v_2\}_k$ are almost orthogonal in $L^2(\R\times M)$, that is
	\begin{align*}
		\| P_C v_1\cdot v_2 \|_{L^2}^2 \simeq \sum_{k\in\Z:|k|\simeq N_1/N_2} \|P_{R_k}v_1\cdot v_2\|_{L^2}^2.
 	\end{align*}
 	By H\"older's inequality and Corollary \ref{cor-pre-Stri-M}, we have
 	\begin{align*}
 		\|P_{R_k}v_1\cdot v_2\|_{L^2} \lesssim & \|P_{R_k}v_1\|_{L^4} \|v_2\|_{L^4}\\
 		\lesssim & N_2^{\frac{1}{4}} \bigg(\frac{M}{N_2}\bigg)^{\kappa} \|P_{R_k}\phi_1\|_{U^4_{\Delta}L^2} N_2^{\frac{1}{2}} \|\phi_2\|_{U^4_{\Delta}L^2}\\
 		\lesssim &  N_2^{\frac{1}{2}} \bigg(\frac{N_2}{N_1} + \frac{1}{N_2}\bigg)^{\kappa}\|P_{R_k}\phi_1\|_{U^4_{\Delta}L^2}  \|\phi_2\|_{U^4_{\Delta}L^2}.
 	\end{align*}
We  	notice that the last inequality comes from the fact $\frac{M}{N_2} \le \frac{N_2}{N_1} + \frac{1}{N_2}$.
	Therefore, we have finished the proof of this proposition.
\end{proof}

\subsection{Nonlinear estimate}
Before starting nonlinear estimate, we introduce a $Z^{\prime}$ norm first
\begin{align*}
	\norm{u}_{Z' (I)} = \norm{u}_{Z(I)}^{\frac{3}{4}} \norm{u}_{X^{\frac{1}{2}} (I)}^{\frac{1}{4}}.
\end{align*}

\begin{proposition}\label{prop-nonlinear}
	For $u_l\in X^{\frac{1}{2}}(I), k=1,2,3$ and $ I $ is a bounded interval, we have
	\begin{align}\label{fml-trilinear}
		\norm{  \prod_{l=1}^{3} \widetilde{u_l}  }_{N(I)}  \lesssim \sum_{\{i,j,k\} = \{1,2,3\}} \norm{u_i}_{X^{\frac{1}{2}} (I)} \norm{u_j}_{Z^{\prime}(I)} \norm{u_k}_{Z^{\prime}(I)},
	\end{align}
	where $\widetilde{u_l}=u_l$ or $\widetilde{u_l}=\overline{u_l}$ for $l=1,2,3$. Furthermore, if there exist $A,B>0$ such that $P_{>A}u_1=u_1,P_{>A}u_2=u_2$ and $P_{<B}u_3=u_3$, we also have
	\begin{equation}\label{fml-trilinear-2}
		\norm{  \prod_{l=1}^{3} \widetilde{u_l}  }_{N(I)}  \lesssim \|u_1\|_{X^{\frac{1}{2}}(I)}\|u_2\|_{Z^{\prime}(I)}\|u_3\|_{Z^{\prime}(I)} + \|u_2\|_{X^{\frac{1}{2}}(I)}\|u_1\|_{Z^{\prime}(I)}\|u_3\|_{Z^{\prime}(I)}.
	\end{equation}
\end{proposition}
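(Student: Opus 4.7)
The plan is to dualize via Proposition~\ref{prop-pre-dual}: since the $N(I)$-norm is a $DU^2_{\Delta}H^{\frac{1}{2}}$-type norm and $Y^{-\frac{1}{2}} \hookrightarrow V^2_{\Delta}H^{-\frac{1}{2}}$, it suffices to control
\[
\Big| \int_I \int_{\T^3} v \cdot \widetilde{u_1}\, \widetilde{u_2}\, \widetilde{u_3} \, dx \, dt \Big|
\]
by the right-hand side of \eqref{fml-trilinear}, uniformly over test functions $v$ with $\|v\|_{Y^{-\frac{1}{2}}} \leq 1$. Next, perform a Littlewood--Paley decomposition, writing $v=\sum_N v_N$ and $u_l = \sum_{N_l} P_{N_l} u_l$. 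Frequency support forces the two largest among $\{N,N_1,N_2,N_3\}$ to be comparable; by symmetry we may assume $N_1\geq N_2\geq N_3$, leaving the regimes $N \sim N_1 \geq N_2 \geq N_3$ and $N \lesssim N_1 \sim N_2 \geq N_3$.

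For each fixed dyadic quadruple, I would apply Cauchy--Schwarz to split the space-time integral into two $L^2_{t,x}$ bilinear factors and then invoke Lemma~\ref{lem-bilinear}. Pairing $(v_N, P_{N_2}u_2)$ with $(P_{N_1}u_1, P_{N_3}u_3)$ produces a bound of the form
\[
N_2^{\frac{1}{2}} N_3^{\frac{1}{2}} \Big(\frac{N_2}{N} + \frac{1}{N_2}\Big)^\kappa \Big(\frac{N_3}{N_1} + \frac{1}{N_3}\Big)^\kappa \|v_N\|_{Y^0} \|P_{N_1}u_1\|_{Y^0} \|P_{N_2}u_2\|_{Y^0} \|P_{N_3}u_3\|_{Y^0}.
\]
Re-expressing the $Y^0$-norms in terms of the weighted $Y^{-\frac{1}{2}}$ norm for $v$ and the $Y^{\frac{1}{2}}$ norms for the $u_l$ produces an overall prefactor of $(N/N_1)^{\frac{1}{2}}$ which, combined with the $\kappa$-gain, yields a decaying geometric envelope in every dyadic direction and makes the quadruple sum absolutely convergent after a final Cauchy--Schwarz in the dyadic indices.

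To distribute the norms among the three factors, I would interpolate Lemma~\ref{lem-bilinear} (which lives at the $Y^0$ level) with the $L^p_{t,x}$-type Strichartz control built into the $Z$-norm from Corollary~\ref{cor-strichartz}. The convex combination $Z'=Z^{\frac{3}{4}}(X^{\frac{1}{2}})^{\frac{1}{4}}$ is calibrated so that two of the three $u_l$ factors may be placed in the weaker $Z'$ norm while only one carries the full $X^{\frac{1}{2}}$-weight, and the dyadic sum still remains $\ell^2$-summable. Symmetrizing over which factor receives the $X^{\frac{1}{2}}$-weight produces the symmetric sum in \eqref{fml-trilinear}. For the refinement \eqref{fml-trilinear-2}, the cutoffs $P_{>A}u_1=u_1$, $P_{>A}u_2=u_2$, $P_{<B}u_3=u_3$ force $u_3$ to be the low-frequency factor; the would-be third term (with $u_3 \in X^{\frac{1}{2}}$) is absorbed by the bilinear gain $(N_3/N_1)^\kappa$, leaving only the two displayed terms.

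\textbf{Main obstacle.} The delicate regime is the high-high-low interaction $N_1\sim N_2\gg N_3$, where a naive $L^\infty_t L^2_x$ estimate would produce a logarithmic divergence in the sum over $N$. Closing this sum requires simultaneously exploiting the $(N_2/N_1+1/N_2)^\kappa$ gain from Lemma~\ref{lem-bilinear} and the interpolation against the weaker $Z'$ norm; one must check that the exponent $p$ in Definition~\ref{def-pre-Z} lies in the correct range so that the Strichartz-side bound meshes with the bilinear $Y^0$-side bound without sacrificing summability in the dyadic frequencies.
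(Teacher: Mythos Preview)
Your proposal is correct and follows essentially the same route as the paper: duality against $Y^{-\frac{1}{2}}$, Littlewood--Paley decomposition into the two frequency regimes $N\sim N_1$ and $N\lesssim N_1\sim N_2$, and interpolation between the bilinear estimate of Lemma~\ref{lem-bilinear} and a Strichartz-type bound producing the $Z$-norm, with the $\kappa$-gain providing dyadic summability. One point to make explicit when you write it up: on the Strichartz side the paper does not apply Corollary~\ref{cor-strichartz} directly to the high-frequency factors $P_{N}v$ and $P_{N_1}u_1$ at their own scales (which would cost $N^{\frac14}N_1^{\frac14}$ and destroy summability), but first decomposes them into cubes of side $N_2$ and $N_3$ respectively, uses almost orthogonality of the products in $L^2_{t,x}$, and only then applies the cube version of Strichartz to recover the correct $Z$-weights on $u_2,u_3$.
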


\begin{proof}
	Suppose that $N_0,N_1,N_2,N_3$ are dyadic numbers. Without loss of generality, we further assume that $N_1\ge N_2\geq N_3$. By  duality, we have that
	\begin{align*}
		&\norm{  \prod_{l=1}^{3} \widetilde{u_l}  }_{N(I)} \lesssim \sup\limits_{\|u_0\|_{Y^{-\frac{1}{2}}}} \bigg| \int_{\T^3\times I} \overline{u_0} \prod_{l=1}^3 \widetilde{u_l} dxdt \bigg|\\
		\le & \sup\limits_{\|u_0\|_{Y^{-\frac{1}{2}}}} \sum_{N_0,N_1\ge N_2\ge N_3} \bigg| \int_{\T^3\times I} \overline{P_{N_0}u_0} \prod_{l=1}^3 \widetilde{P_{N_l}u_l} dxdt \bigg|.
	\end{align*}
	By the spatial frequency orthogonality, we further assume that $N_1\sim max\{N_0,N_2\}$. Hence, we need to consider the following two cases:
	\begin{enumerate}
		\item
		$N_0 \sim N_1\geq N_2\geq N_3$,
		\item
		$N_0\le N_2 \sim N_1 \ge N_3$.
	\end{enumerate}
	
	\textbf{Case 1: $N_0 \sim N_1\geq N_2\geq N_3$}: On one hand, by Cauchy-Schwarz's inequality and bilinear estimate Lemma \ref{lem-bilinear}, we can find 
	\begin{align}\label{fml-lwp-nonl-int1}
		&\bigg| \int_{\T^3\times I} \overline{P_{N_0}u_0} \prod_{l=1}^3P_{N_l}\widetilde{u_l} dxdt  \bigg| \lesssim \|P_{N_0}u_0 P_{N_2}u_2\|_{L^2_{t,x}}\|P_{N_1}u_1 P_{N_3}u_3\|_{L^2_{t,x}}\nonumber\\
		\lesssim & \left( \frac{N_3}{N_1} + \frac{1}{N_3} \right)^{\kappa} \left( \frac{N_2}{N_0} + \frac{1}{N_2} \right)^{\kappa}\|P_{N_0}u_0\|_{Y^0(I)}\|P_{N_1}u_1\|_{Y^0(I)}\|P_{N_2}u_2\|_{X^{\frac{1}{2}}(I)}\|P_{N_3}u_3\|_{X^{\frac{1}{2}}(I)}.
	\end{align}
	On the other hand, we set $\{ C_j \}_{j \in \Z}$ the cube partition of size $N_2$ and $\{ C_k \}_{k \in \Z}$ the cube partition of size $N_3$. We can verify that $\{P_{C_j} P_{N_0}u_0 P_{N_2}u_2\}_j$ and $\{P_{C_k} P_{N_1}u_1 P_{N_3}u_3\}_k$ are both almost orthogonal in $L^2(\T^3\times I)$. Therefore, by H\"older's inequality and Strichartz estimates Lemma \ref{prop-pre-Stri}, we obtain 
	\begin{align}\label{fml-lwp-nonl-int2}
		& \bigg| \int_{\T^3\times I} \overline{P_{N_0}u_0} \prod_{l=1}^3P_{N_l}\widetilde{u_l} dxdt  \bigg| \le \|P_{N_0}u_0 P_{N_2}u_2\|_{L^2_{t,x}}\|P_{N_1}u_1 P_{N_3}u_3\|_{L^2_{t,x}}\nonumber \\
		& \lesssim \big( \sum_j \norm{(P_{C_j} P_{N_0} u_0) P_{N_2} u_2}_{L_{t,x}^2(I \times \T^3)}^2 \big)^{\frac{1}{2}} \big( \sum_k \norm{(P_{C_k} P_{N_1} u_1) P_{N_3} u_3}_{L_{t,x}^2(I \times \T^3)}^2 \big)^{\frac{1}{2}}\nonumber \\
		& \lesssim \big( \sum_j \norm{P_{C_j} P_{N_0} u_0}_{L_{t,x}^4(I \times \T^3)}^2 \norm{P_{N_2}u_2}_{L_{t,x}^4(I \times \T^3)}^2 \big)^{\frac{1}{2}}  \big( \sum_j \norm{P_{C_k} P_{N_1} u_1}_{L_{t,x}^4(I \times \T^3)}^2 \norm{P_{N_3}u_3}_{L_{t,x}^4(I \times \T^3)}^2 \big)^{\frac{1}{2}}\nonumber \\
		&\lesssim \big( \sum_j \norm{P_{C_j} P_{N_0} u_0}_{Y^0(I)}^2 N_2^{\frac{1}{2}}\norm{P_{N_2}u_2}_{L_{t,x}^4(I \times \T^3)}^2 \big)^{\frac{1}{2}}  \big( \sum_j \norm{P_{C_k} P_{N_1} u_1}_{Y^0(I)}^2 N_3^{\frac{1}{2}}\norm{P_{N_3}u_3}_{L_{t,x}^4(I \times \T^3)}^2 \big)^{\frac{1}{2}} \nonumber\\
		& \lesssim  \norm{P_{N_0} u_0}_{Y^0(I)}\norm{P_{N_1} u_1}_{Y^0(I)} \norm{P_{N_2} u_2}_{Z(I)}\norm{P_{N_3} u_3}_{Z(I)}.
	\end{align}
	Interpolating between $\eqref{fml-lwp-nonl-int1}$ and $\eqref{fml-lwp-nonl-int2}$, for some $\kappa_1 > 0$ we get
	\begin{align}\label{fml-lwp-nonlinear}
		&\bigg| \int_{\T^3\times I} \overline{P_{N_0}u_0} \prod_{l=1}^3P_{N_l}\widetilde{u_l} dxdt  \bigg| \le \|P_{N_0}u_0 P_{N_2}u_2\|_{L^2_{t,x}}\|P_{N_1}u_1 P_{N_3}u_3\|_{L^2_{t,x}}\nonumber\\
		\lesssim & \big( \frac{N_3}{N_1} + \frac{1}{N_3} \big)^{\kappa_1} \big( \frac{N_2}{N_0} + \frac{1}{N_2} \big)^{\kappa_1}\|P_{N_0}u_0\|_{Y^{-\frac{1}{2}}(I)}\|P_{N_1}u_1\|_{X^{\frac{1}{2}}(I)}\|P_{N_2}u_2\|_{Z^{\prime}(I)}\|P_{N_3}u_3\|_{z^{\prime}(I)}.
	\end{align}
	The coefficient $\big( \frac{N_3}{N_1} + \frac{1}{N_3} \big)^{\kappa_1} \big( \frac{N_2}{N_0} + \frac{1}{N_2} \big)^{\kappa_1}$ makes it possible to sum over $N_0\sim N_1 \ge N_2 \ge N_3$. In conclusion, we have
	\begin{align*}
		&\sum_{N_0\sim N_1 \ge N_2 \ge N_3} \left( \frac{N_3}{N_1} + \frac{1}{N_3} \right)^{\kappa_1} \left( \frac{N_2}{N_0} + \frac{1}{N_2} \right)^{\kappa_1}\|P_{N_0}u_0\|_{Y^{-\frac{1}{2}}(I)}\|P_{N_1}u_1\|_{X^{\frac{1}{2}}(I)}\|P_{N_2}u_2\|_{Z^{\prime}(I)}\|P_{N_3}u_3\|_{z^{\prime}(I)} \\
		\lesssim & \|u_0\|_{Y^{-\frac{1}{2}}(I)}\|u_1\|_{X^{\frac{1}{2}}(I)}\|u_2\|_{Z^{\prime}(I)}\|u_3\|_{z^{\prime}(I)}.
	\end{align*}
	
	\textbf{Case 2: $N_0\le N_2 \sim N_1 \ge N_3$}: Similar argument yields
	\begin{align}\label{fml-lwp-non-int3}
		&\bigg| \int_{\T^3\times I} \overline{P_{N_0}u_0} \prod_{l=1}^3P_{N_l}\widetilde{u_l} dxdt  \bigg| \lesssim \|P_{N_0}u_0 P_{N_2}u_2\|_{L^2_{t,x}}\|P_{N_1}u_1 P_{N_3}u_3\|_{L^2_{t,x}}\nonumber\\
		\lesssim & \left( \frac{N_3}{N_1} + \frac{1}{N_3} \right)^{\kappa} \left( \frac{N_2}{N_0} + \frac{1}{N_0} \right)^{\kappa}\|P_{N_0}u_0\|_{Y^0(I)}\|P_{N_1}u_1\|_{Y^0(I)}\|P_{N_2}u_2\|_{X^{\frac{1}{2}}(I)}\|P_{N_3}u_3\|_{X^{\frac{1}{2}}(I)},
	\end{align}
	and
	\begin{align}\label{fml-lwp-non-int4}
		& \bigg| \int_{\T^3\times I} \overline{P_{N_0}u_0} \prod_{l=1}^3P_{N_l}\widetilde{u_l} dxdt  \bigg| \le \|P_{N_0}u_0 P_{N_2}u_2\|_{L^2_{t,x}}\|P_{N_1}u_1 P_{N_3}u_3\|_{L^2_{t,x}}\nonumber \\
		& \lesssim  \norm{P_{N_0} u_0}_{Y^0(I)}\norm{P_{N_1} u_1}_{Y^0(I)} \norm{P_{N_2} u_2}_{Z(I)}\norm{P_{N_3} u_3}_{Z(I)}.
	\end{align}
	Interpolating between $\eqref{fml-lwp-non-int3}$ and $\eqref{fml-lwp-non-int4}$, summing over $N_0\le N_2 \sim N_1 \ge N_3$, we get
	\begin{align*}
		&\sum_{N_0\le N_2 \sim N_1 \ge N_3} \left( \frac{N_3}{N_1} + \frac{1}{N_3} \right)^{\kappa_1} \left( \frac{N_2}{N_0} + \frac{1}{N_2} \right)^{\kappa_1}\|P_{N_0}u_0\|_{Y^{-\frac{1}{2}}(I)}\|P_{N_1}u_1\|_{X^{\frac{1}{2}}(I)}\|P_{N_2}u_2\|_{Z^{\prime}(I)}\|P_{N_3}u_3\|_{z^{\prime}(I)} \\
		\lesssim & \|u_0\|_{Y^{-\frac{1}{2}}(I)}\|u_1\|_{X^{\frac{1}{2}}(I)}\|u_2\|_{Z^{\prime}(I)}\|u_3\|_{z^{\prime}(I)}.
	\end{align*}
	We summarize the other five cases $N_1\ge N_3 \ge N_2$, $N_2\ge N_1 \ge N_3$, $N_2\ge N_3 \ge N_1$, $N_3\ge N_1 \ge N_2$ and $N_3\ge N_2 \ge N_1$ and obtain $\eqref{fml-trilinear}$.
	
	Particularly, if exist $A,B>0$ such that $P_{>A}u_1=u_1,P_{>A}u_2=u_2$ and $P_{<B}u_3=u_3$, we only need to consider when $N_1\ge N_2 \gtrsim N_3$ and $N_2 \ge N_1 \gtrsim N_3$ and $\eqref{fml-trilinear-2}$ follows.
\end{proof}
By combining the bilinear estimate above and the Picard iteration, we have the following local well-posedness for \eqref{cubic-intro}.
\begin{proposition}[Local well-posedness]\label{prop-lwp-LWP}
	\begin{enumerate}
		\item\label{1} For the given $E>0$,  if  $u_0\in H^{\frac{1}{2}}(\T^3)$ satisfying $\|u_0\|_{H^{\frac{1}{2}}(\T^3)} \le E$, and there exist $\delta_0(E) > 0$ such that
		\begin{equation*}
			\|e^{it\Delta}u_0\|_{Z^{\prime}(I)} < \delta
		\end{equation*}
		for some $0<\delta < \delta_0$ and bounded interval $0\in I$. Then, there exist a unique solution $u\in X^{\frac{1}{2}}(I)$ to $\eqref{cubic-intro}$ with initial data $u_0$ satisfies
		\begin{equation*}
			\|u - e^{it\Delta}u_0\|_{X^{\frac{1}{2}}} \lesssim \delta^2.
		\end{equation*}
		
		\item\label{2} If $u \in X^{\frac{1}{2}}(I)$ is a strong solution of $\eqref{cubic-intro}$ on some interval $I$ and enjoys the bound
		\begin{align*}
			\|u\|_{L_t^\infty H^\frac{1}{2}(I\times\T^3)}\leq E.
		\end{align*}
		Moreover, if
		\begin{equation*}
			\|u\|_{Z(I)} < +\infty,
		\end{equation*} 
		then $u$ can be extended as a strong solution to an interval $J$, where $\overline{I} \subset J$, and
		\begin{equation*}
			\|u\|_{X^{\frac{1}{2}}(J)} \le C\big( E(u) , \|u\|_{Z(I)} \big).
		\end{equation*}
	\end{enumerate}
\end{proposition}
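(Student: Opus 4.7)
The plan divides into two parts, matching the statement.

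\textbf{Part (1) (small-data existence via Picard iteration).} Set $u_{\mathrm{lin}} := e^{it\Delta}u_0$. By Proposition~\ref{prop-pre-Xs-Hs}, $\|u_{\mathrm{lin}}\|_{X^{\frac{1}{2}}(I)} \leq E$, and by hypothesis $\|u_{\mathrm{lin}}\|_{Z'(I)} < \delta$. I search for $u = u_{\mathrm{lin}} + v$ with $v$ in the closed ball
\[
B := \bigl\{v \in X^{\frac{1}{2}}(I) : v(0)=0,\ \|v\|_{X^{\frac{1}{2}}(I)} \leq K\delta^2\bigr\}, \qquad K=K(E) \text{ large},
\]
via Banach fixed point for the Duhamel map $\Phi(v)(t) := -i\int_0^t e^{i(t-s)\Delta}|u_{\mathrm{lin}}+v|^2(u_{\mathrm{lin}}+v)(s)\,ds$. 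The trilinear estimate \eqref{fml-trilinear}, together with the elementary bound $\|w\|_{Z'(I)} \lesssim \|w\|_{X^{\frac{1}{2}}(I)}$ (which follows from Remark~\ref{rmk Z<X} and the definition of $Z'$), yields
\[
\|\Phi(v)\|_{X^{\frac{1}{2}}(I)} \lesssim (E+K\delta^2)(\delta + C_1 K\delta^2)^2,
\]
which is $\leq K\delta^2$ once $K=K(E)$ is chosen sufficiently large and $\delta$ then sufficiently small. Expanding $|u_1|^2 u_1 - |u_2|^2 u_2$ as three trilinear pieces each containing $v_1-v_2$ (or its conjugate) as one factor and applying \eqref{fml-trilinear} produces a contraction constant $\lesssim C_2 E\delta$, which is $<1/2$ for $\delta < \delta_0(E)$. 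The fixed point is the desired solution, with $\|u-u_{\mathrm{lin}}\|_{X^{\frac{1}{2}}(I)} \leq K\delta^2 \lesssim_E \delta^2$.

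\textbf{Part (2) (a priori bound and extension).} Exploiting $\|u\|_{Z(I)}<\infty$ and the $\ell^p$-summability built into the $Z$-norm, partition $I$ into $M = M(\|u\|_{Z(I)},\eta)$ subintervals $I_j = [t_j, t_{j+1}]$, each of length at most $1$, on which $\|u\|_{Z(I_j)} < \eta$. On each $I_j$, Duhamel combined with \eqref{fml-trilinear} gives
\[
\|u\|_{X^{\frac{1}{2}}(I_j)} \leq \|u(t_j)\|_{H^{\frac{1}{2}}} + C\|u\|_{X^{\frac{1}{2}}(I_j)}\|u\|_{Z'(I_j)}^2 \leq E + C\eta^{3/2}\|u\|_{X^{\frac{1}{2}}(I_j)}^{3/2},
\]
after using $\|u\|_{Z'(I_j)}^2 = \|u\|_{Z(I_j)}^{3/2}\|u\|_{X^{\frac{1}{2}}(I_j)}^{1/2}$. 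A continuity-in-right-endpoint bootstrap then forces $\|u\|_{X^{\frac{1}{2}}(I_j)} \leq 2E$ once $\eta = \eta(E)$ is sufficiently small. Summing over $j$ yields $\|u\|_{X^{\frac{1}{2}}(I)} \leq 2EM$. Finally, because $u(\sup I)\in H^{\frac{1}{2}}$ has norm $\leq E$ and its free evolution has arbitrarily small $Z'$-norm on short forward intervals, Part~(1) extends $u$ strictly past $\sup I$.

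\textbf{Main obstacle.} The delicate point is the continuity-in-right-endpoint bootstrap of Part~(2): because the restriction $X^{\frac{1}{2}}$-norm is a supremum over subintervals of length $\leq 1$, one must verify that $\tau \mapsto \|u\|_{X^{\frac{1}{2}}([t_j,t_j+\tau])}$ is continuous and vanishes as $\tau \to 0^+$ in order to close the bootstrap. This rests on the right-continuity property of $V^2$-functions together with the embedding \eqref{fml-pre-Embed-XY}; similarly, the partition step requires absolute continuity of the $L^p_{t,x}$-norms inside the definition of $Z$ to ensure $M$ is finite once $\eta$ is fixed. Once these soft facts are in place, the analytic heart of the proof is entirely carried by Proposition~\ref{prop-nonlinear} and the interpolation identity $\|w\|_{Z'} = \|w\|_{Z}^{3/4}\|w\|_{X^{\frac{1}{2}}}^{1/4}$.
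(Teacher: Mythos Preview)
Your proof is correct and, for Part~(1), essentially identical to the paper's: both run a contraction using the trilinear estimate of Proposition~\ref{prop-nonlinear}, the only cosmetic difference being that you iterate on $v=u-e^{it\Delta}u_0$ in a ball of radius $K\delta^2$, whereas the paper iterates on $u$ in a set constrained simultaneously by the $X^{\frac12}$- and $Z'$-norms.

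For Part~(2) the paper organizes the argument slightly differently. Rather than partitioning all of $I$ into pieces with small $Z$-norm and bootstrapping on each, it works only near the right endpoint: it chooses a single $T_1$ close to $T=\sup I$ with $\|u\|_{Z'(T_1,T)}<\varepsilon$ and runs the continuity argument directly on $h(s)=\|e^{i(t-T_1)\Delta}u(T_1)\|_{Z'(T_1,T_1+s)}$, obtaining $h(s)\le\tfrac12\delta_0$ for all $s\le T-T_1$, after which Part~(1) supplies the extension past $T$. Your partition-and-bootstrap route has the advantage of yielding the quantitative a~priori bound $\|u\|_{X^{\frac12}(I)}\le C(E,\|u\|_{Z(I)})$ directly (the paper's proof leaves this implicit). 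One small caveat: the sentence ``summing over $j$ yields $\|u\|_{X^{\frac12}(I)}\le 2EM$'' is not literal, since $X^{\frac12}(I)$ is a supremum over unit subintervals that may straddle several $I_j$; the standard subadditivity of the $N$-norm over adjacent intervals, together with Duhamel, supplies the missing step and still gives a bound of the form $C(E,M)$.
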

\begin{proof}
We prove \eqref{1} first. Denote  $\Phi$ by the nonlinear mapping
	\begin{equation*}
		\Phi(u) := e^{it\Delta}u_0 - i \int_{0}^{t}e^{i(t-s)\Delta} u(s)|u(s)|^2 ds,
	\end{equation*}
	and the work space
	\begin{equation*}
		S:= \{ u\in X^{\frac{1}{2}}(I): \|u\|_{X^{\frac{1}{2}}(I)} \le 2 E, \|u\|_{Z^{\prime}(I)}< c \},
	\end{equation*}
	where constant $c>0$ is to be chosen later.
	
	If $u_1,u_2 \in S$, by the nonlinear estimate Proposition \ref{prop-nonlinear}, we can verify that
	\begin{align*}
		&\|\Phi(u_1) - \Phi(u_2)\|_{X^{\frac{1}{2}}(I)}\\
		\le& C\big( \|u_1\|_{X^{\frac{1}{2}}(I)} + \|u_2\|_{X^{\frac{1}{2}}(I)} \big)\big( \|u_1\|_{Z^{\prime}(I)} + \|u_2\|_{Z^{\prime}(I)} \big) \|u_1 - u_2\|_{X^{\frac{1}{2}}(I)}\\
		\le & 2CcE \|u_1 - u_2\|_{X^{\frac{1}{2}}(I)}.
	\end{align*}
	Furthermore, by Proposition \ref{prop-pre-Xs-Hs} and nonlinear estimates Proposition \ref{prop-nonlinear}, then for any $u\in S$ we get
	\begin{align*}
		\|u\|_{X^{\frac{1}{2}}(I)} \le & \|\Phi(0)\|_{X^{\frac{1}{2}}(I)} + \|\Phi(u) - \Phi(0)\|_{X^{\frac{1}{2}}(I)}\\
		\le & \|u_0\|_{H^{\frac{1}{2}}} + Cc^2E,
	\end{align*}
	and 
	\begin{align*}
		\|u\|_{Z^{\prime}(I)} \le & \|\Phi(0)\|_{Z^{\prime}(I)} + \|\Phi(u) - \Phi(0)\|_{Z^{\prime}(I)}\\
		\le & \delta + Cc^2E.
	\end{align*}
	Then we take $c = 2\delta$ and $\delta_0(E) = \frac{1}{8CE} \ll 1$. Therefore, $\Phi$ is a contraction map and maps $S$ to itself. Consequently, we obtain a unique solution $u\in X^{\frac{1}{2}}(I)$ with initial data $u_0$. Using Proposition \ref{prop-nonlinear}, we can verify that
	\begin{equation*}
		\|u - e^{it\Delta}u_0\|_{X^{\frac{1}{2}}(I)} \lesssim \|u\|_{X^{\frac{1}{2}}(I)}\|u\|_{Z^{\prime}(I)}^3 \le E \delta^3 \lesssim \delta^2. 
	\end{equation*} 
	Thus, we have finished the proof of the first part.
	
Next, we prove \eqref{2}. Without loss of generality, we assume that $I = (0,T)$ for some $T>0$. We take $\varepsilon > 0$ sufficient small and $T_1 > T-1$ such that 
	\begin{equation*}
		\|u\|_{Z^{\prime}(T_1,T)} < \varepsilon.
	\end{equation*}
	Then, we set
	\begin{equation*}
		h(s) := \bigg\| e^{i(t-T_1)\Delta}u(T_1) \bigg\|_{Z^{\prime}(T_1,T_1+s)},
	\end{equation*}
	and $h(s)$ is continuous function with $h(0) = 0$. In the proof of {\bf (1)}, we can find $h(s) \leq \delta_0$, which implies the following:
	\begin{align*}
		\norm{u(t) - e^{i (t- T_1)\Delta} u(T_1)}_{X^{\frac{1}{2}} (T_1 , T_1 +s)} \leq h(s)^2 .
	\end{align*}
	By Duhamel's principle, we also obtain
	\begin{align*}
		\norm{ e^{i (t- T_1)\Delta} u(T_1)}_{Z(T_1 , T_1 +s)} & \leq \norm{u}_{Z (T_1 , T_1 +s)} + C \norm{u(t) - e^{i (t- T_1)\Delta} u(T_1)}_{X^{\frac{1}{2}} (T_1 , T_1 +s)} \\
		& \leq \varepsilon + C h(s)^2 .
	\end{align*}
	Thus gives
	\begin{align*}
		h(s) \leq \norm{ e^{i (t- T_1)\Delta} u(T_1)}_{Z(T_1 , T_1 +s)}^{\frac{3}{4}} \norm{ e^{i (t- T_1)\Delta} u(T_1)}_{X^{\frac{1}{2}}(T_1 , T_1 +s)}^{\frac{1}{4}} \leq (\varepsilon + C h(s)^3)^{\frac{3}{4}} \norm{u(T_1)}_{H^{\frac{1}{2}}}^{\frac{1}{4}} .
	\end{align*}
	Therefore, if $\varepsilon >0$ is sufficiently small, by continuity argument, we get
	\begin{align*}
		h(s) \lesssim \varepsilon^{\frac{3}{4}} E^{\frac{1}{4}} \le \frac{1}{2} \delta_0 ,
	\end{align*}
	for all $s \leq T- T_1$. Consequently, there exist $T_2 > T$ such that
	\begin{align*}
		\norm{e^{i(t -T_1)\Delta} u(T_1)}_{Z' (T_1 ,T_2)} \leq \frac{3}{4} \delta_0  .
	\end{align*}
	By the conclusion we have proved in {\bf (1)}, we see that $u$ can be extended to  $(0, T_2)$. 
\end{proof}

	Next, we will show the stability of equation \eqref{cubic-intro}. 
\begin{proposition}[Stability]\label{prop-stability}
Assume that $I$ is an open bounded interval, $\mu \in \{0, 1\}$, and $\widetilde{u} \in X^{\frac{1}{2}} (I)$ satisfies the approximate equation
\begin{align}\label{fml-lwp-stability-app}
(i\partial_t + \Delta_{\T^3}) \widetilde{u} = \abs{\widetilde{u}}^2 \widetilde{u} + e \quad \text{ on } I \times \T^3.
\end{align}
We assume that 
\begin{align*}
\norm{\widetilde{u}}_{Z(I)} + \norm{\widetilde{u}}_{L_t^{\infty} (I , H^{\frac{1}{2}} (\T^3))} 
\leq M,
\end{align*}
for some $0\le M<+\infty$ and there exists $t_0 \in I$, $u_0 \in H^{\frac{1}{2}}(\T^3)$ such that
\begin{align*}
\norm{u_0 -\widetilde{u}(t_0)}_{H^{\frac{1}{2}} (\T^3)} + \norm{e}_{N(I)} \leq \varepsilon,
\end{align*}
where $0 < \varepsilon < \varepsilon_1$ and $0 < \varepsilon_1 = \varepsilon_1 (M) \leq 1$ is sufficiently small.
	
Then there exists a strong solution $u \in X^{\frac{1}{2}} (I)$ of the Schr\"odinger equation
\begin{align}\label{fml-lwp-NLS}
(i\partial_t + \Delta_{\T^3}) u = \abs{u}^2 u ,
\end{align}
with initial data $u(t_0) = u_0$. Furthermore, $u$ satisfies
\begin{align*}
\norm{u}_{X^{\frac{1}{2}}(I)} + \norm{\widetilde{u}}_{X^{\frac{1}{2}} (I)} \leq C(M) ,\\
\norm{u-\widetilde{u}}_{X^{\frac{1}{2}}(I)} \leq C(M)\varepsilon .
\end{align*}
\end{proposition}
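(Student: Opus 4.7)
The strategy is the standard stability/bootstrap argument adapted to the atomic spaces $X^{1/2}$ and $Z$: partition $I$ into finitely many subintervals on which $\widetilde{u}$ has small critical norm, and then control the difference $w := u - \widetilde{u}$ inductively on these subintervals by means of Propositions \ref{prop-nonlinear} and \ref{prop-lwp-LWP}.

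\textbf{Step 1 (Upgrade $\widetilde{u}$ to $X^{1/2}$).} Given a small threshold $\eta = \eta(M) > 0$ to be chosen, partition $I = \bigcup_{j=1}^J I_j$ into consecutive subintervals on which $\|\widetilde{u}\|_{Z(I_j)} \leq \eta$; the number $J$ of pieces is controlled by $M$ and $\eta$ alone. On each $I_j$, Duhamel's formula applied to \eqref{fml-lwp-stability-app}, Proposition \ref{prop-pre-Xs-Hs} for the linear part, Proposition \ref{prop-nonlinear} for the cubic part, together with the interpolation
\begin{align*}
\|\widetilde{u}\|_{Z'(I_j)} \leq \|\widetilde{u}\|_{Z(I_j)}^{3/4} \|\widetilde{u}\|_{X^{1/2}(I_j)}^{1/4} \leq \eta^{3/4} \|\widetilde{u}\|_{X^{1/2}(I_j)}^{1/4},
\end{align*}
yield
\begin{align*}
\|\widetilde{u}\|_{X^{1/2}(I_j)} \leq \|\widetilde{u}(t_{j-1})\|_{H^{1/2}} + C\eta^{3/2}\|\widetilde{u}\|_{X^{1/2}(I_j)}^{3/2} + \|e\|_{N(I_j)}.
\end{align*}
Choosing $\eta$ small (depending on $M$), a continuity argument gives $\|\widetilde{u}\|_{X^{1/2}(I_j)} \leq C(M)$, hence $\|\widetilde{u}\|_{Z'(I_j)} \leq C(M)\eta^{3/4}$ is as small as we wish.

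\textbf{Step 2 (Difference equation).} Writing $u = \widetilde{u} + w$, the function $w$ should solve
\begin{align*}
(i\partial_t + \Delta_{\T^3})w = \mathcal{N}(w,\widetilde{u}) - e, \qquad w(t_0) = u_0 - \widetilde{u}(t_0),
\end{align*}
where $\mathcal{N}(w,\widetilde{u})$ is a finite sum of trilinear monomials with factors in $\{w, \overline{w}, \widetilde{u}, \overline{\widetilde{u}}\}$ and at least one factor equal to $w$ or $\overline{w}$. Assume inductively that on $I_j$ one has $\|w(t_{j-1})\|_{H^{1/2}} \leq \varepsilon_{j-1}$, with $\varepsilon_0 = \varepsilon$. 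Applying Proposition \ref{prop-nonlinear} to each monomial, putting $w$ in the $X^{1/2}$ slot and the (small) $\widetilde{u}$ in the two $Z'$-slots whenever possible, gives
\begin{align*}
\|w\|_{X^{1/2}(I_j)} \leq C(\varepsilon_{j-1} + \varepsilon) + C(M)\eta^{3/4}\|w\|_{X^{1/2}(I_j)} + C\|w\|_{X^{1/2}(I_j)}^2 \|w\|_{Z'(I_j)}.
\end{align*}
Existence of $u$ on $I_j$ is ensured by the perturbative regime of Proposition \ref{prop-lwp-LWP}(1), and for $\eta = \eta(M)$ small and $\varepsilon < \varepsilon_1(M)$ small enough, a continuity argument closes this bound as
\begin{align*}
\|w\|_{X^{1/2}(I_j)} \leq C(M)(\varepsilon_{j-1} + \varepsilon).
\end{align*}
By the embedding \eqref{fml-pre-Embed-XY}, $\varepsilon_j := \|w(t_j)\|_{H^{1/2}} \leq \|w\|_{X^{1/2}(I_j)}$, so the induction proceeds on $I_{j+1}$.

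\textbf{Step 3 (Iteration and conclusion).} Iterating Step 2 over the $J = J(M)$ subintervals yields $\varepsilon_j \leq C(M)^j\varepsilon$ and, summing, $\|u - \widetilde{u}\|_{X^{1/2}(I)} \leq C(M)\varepsilon$. Combined with Step 1 this also gives $\|u\|_{X^{1/2}(I)} \leq C(M)$. Since the perturbative bound is maintained on every subinterval, no blowup occurs before the right endpoint of $I$, so $u$ exists on all of $I$.

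\textbf{Main obstacle.} The principal delicacy is ensuring that the inductive bound on $w$ does not degrade uncontrollably across the $J(M)$ subintervals, since each application of the trilinear estimate potentially multiplies the error by a factor $C(M)$. This forces $\varepsilon_1(M)$ to be chosen extremely small (essentially $\varepsilon_1 \sim C(M)^{-J(M)}$) and requires the smallness threshold $\eta(M)$ to be fixed once in Step 1 and then re-used throughout Step 2. The other technical point is that Proposition \ref{prop-nonlinear} hands out $Z'$-norms, not $Z$-norms; the interpolation identity $\|\cdot\|_{Z'} \leq \|\cdot\|_Z^{3/4}\|\cdot\|_{X^{1/2}}^{1/4}$ is exactly what converts the $Z$-smallness obtained from the partition into the $Z'$-smallness required by the trilinear estimate, which is why Step 1 is needed before Step 2 can be run.
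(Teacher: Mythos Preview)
Your proposal is correct and follows essentially the same route as the paper's proof: both arguments partition $I$ into $J=J(M)$ subintervals on which $\norm{\widetilde{u}}_{Z}$ is small, upgrade this via the interpolation $\norm{\cdot}_{Z'}\le\norm{\cdot}_Z^{3/4}\norm{\cdot}_{X^{1/2}}^{1/4}$ and a continuity argument to $X^{1/2}$-control and $Z'$-smallness of $\widetilde{u}$, then run the trilinear estimate (Proposition~\ref{prop-nonlinear}) on the difference $w=u-\widetilde{u}$ inductively across the pieces. The paper organizes the same ingredients into four labeled steps (with an explicit $h(s)$ continuity function for the free evolution of $\widetilde{u}$), but the logic is identical; your identification of the two delicate points (exponential loss across $J(M)$ pieces forcing $\varepsilon_1\sim C(M)^{-J(M)}$, and the need to convert $Z$-smallness to $Z'$-smallness before invoking Proposition~\ref{prop-nonlinear}) matches what drives the paper's argument.
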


\begin{proof}
This proof is adapted from Proposition 3.4 in \cite{IPT3} and Proposition 3.5 in \cite{IPRT3} and we will prove it in several steps:
\begin{enumerate}
\item [\textit{Step1}:] From the proof of Proposition \ref{prop-lwp-LWP}, there exists $\delta_1 = \delta_1 (M)$ such that if for some interval $t_0 \in J \subset I$
\begin{align*}
	\norm{e^{i (t- t_0)\Delta} \widetilde{u}(t_0)}_{Z' (J)} + \norm{e}_{N(J)} \leq \delta_1,
\end{align*} 
then there exists a unique solution $\widetilde{u}$ to \eqref{fml-lwp-stability-app} on $J$ and
\begin{align*}
	\norm{\widetilde{u} - e^{i(t-t_0) \Delta} \widetilde{u} (t_0)}_{X^{\frac{1}{2}} (J)} \leq \norm{e^{i(t-t_0) \Delta} \widetilde{u} (t_0)}_{Z'(J)}^2 + 2 \norm{e}_{N(J)}.
\end{align*}

\item [\textit{Step2}:] If there exists $\varepsilon_1 = \varepsilon_1 (M)$ such that
\begin{align}\label{fml-lwp-small1}
	\begin{aligned}
		& \norm{e}_{N(I_k)} \leq \varepsilon_1 , \\
		& \norm{\widetilde{u}}_{Z(I_k)} \leq \varepsilon \leq \varepsilon_1 ,
	\end{aligned}
\end{align}
hold on $I_k = ( T_k ,T_{k+1} )$, we claim that
\begin{align}\label{fml-lwp-claim}
	\begin{aligned}
		& \norm{e^{i(t-t_0) \Delta} \widetilde{u} (T_k)}_{Z' (I_k)} \leq C (1 +M ) ( \varepsilon + \norm{e}_{N(I_k)})^{\frac{1}{2}} ,\\
		&\norm{\widetilde{u}}_{Z' (I_k)} \leq C (1 +M ) ( \varepsilon + \norm{e}_{N(I_k)})^{\frac{1}{2}} .
	\end{aligned}
\end{align}
We denote $h(s) = \norm{e^{i(t-T_k) \Delta} \widetilde{u}(T_k)}_{Z' (T_k ,T_k +s)}$ and let $J_k = [ T_k, T') \subset I_k$ be the largest interval such that $h(s) \leq \frac{1}{2} \delta_1(M)$. On one hand, Duhamel principle gives
\begin{align*}
	\norm{e^{i(t - T_k) \Delta} \widetilde{u}(T_k)}_{Z(T_k ,T_k + s)} & \leq \norm{\widetilde{u}}_{Z(T_k, T_k +s)} +  \norm{\widetilde{u} - e^{i(t-T_k) \Delta} \widetilde{u}(T_k)}_{X^{\frac{1}{2}} (T_k, T_k +s)} \\
	& \leq \varepsilon + h(s)^3 + 2 \norm{e}_{N(I_k)}.
\end{align*}
On the other hand, direct computation shows
\begin{align*}
	h(s) & \leq \norm{e^{i(t - T_k) \Delta} \widetilde{u}(T_k)}_{Z(T_k ,T_k + s)}^{\frac{3}{4}} \norm{e^{i(t - T_k) \Delta} \widetilde{u}(T_k)}_{X^{\frac{1}{4}} (T_k ,T_k + s)}^{\frac{1}{2}} \\
	& \leq \bigg( {\varepsilon + h(s)^2 + 2 \norm{e}_{N(I_k)}} \bigg)^{\frac{3}{4}} M^{\frac{1}{4}} \\
	& \leq C (1 + M) (\varepsilon + \norm{e}_{N(I_k)})^{\frac{3}{4}} + C(1+M)h(s)^{\frac{3}{2}} .
\end{align*}
By taking $\varepsilon_1$ sufficiently small, then claim $\eqref{fml-lwp-claim}$ follows immediately from continuous argument.

\item [\textit{Step3}:] On time interval $I_k = (T_k ,T_{k+1})$, we assume that
\begin{align}\label{fml-lwp-small-S3}
	\begin{aligned}
		&\norm{e^{i(t -T_k) \Delta}\widetilde{u} (T_k)}_{Z' (I_k)} \leq \varepsilon \leq \varepsilon_0 ,\\
		&\norm{\widetilde{u}}_{Z' (I_k)} \leq \varepsilon \leq \varepsilon_0 ,\\
		&\norm{e}_{N(I_k)} \leq \varepsilon_0, 
	\end{aligned}
\end{align}
where $\varepsilon_0$ is sufficiently small. Hence, $\|\widetilde{u}\|_{X^{\frac{1}{2}}(I_k)}$ can be controlled as follows
\begin{align*}
	\norm{\widetilde{u}}_{X^{\frac{1}{2}}(I_k)} \leq \norm{e^{i(t-T_k) \Delta}\widetilde{u} (T_k)}_{X^{\frac{1}{2}}(I_k)}  + \norm{\widetilde{u} - e^{i(t-T_k) \Delta}\widetilde{u} (T_k)}_{X^{\frac{1}{2}}(I_k)}  \leq M+1 .
\end{align*}
From Proposition \ref{prop-lwp-LWP}, there exists time interval $J_u$ and a strong solution $u$ to \eqref{fml-lwp-NLS} on $J_u$ such that
\begin{align*}
	a_k := \norm{\widetilde{u} (T_k) -u(T_k)}_{H^{\frac{1}{2}} (\T^3)} \leq \varepsilon_0 .
\end{align*}
Let $J_k = [T_k ,T_k +s] \cap I_k \cap J_u$ be the maximal interval such that
\begin{align}\label{fml-lwp-diff}
	\norm{\omega}_{Z'(J_k)} \leq 10 C \varepsilon_0 \leq \frac{1}{10 (M+1)} ,
\end{align}
where $\omega : = u - \widetilde{u}$. Observing that $J_k$ exists and is nonempty, since $s \mapsto \norm{\omega}_{Z'(T_k , T_k +s)} $ is finite and continuous on $J_u$ and vanishes for $s=0$. We can verify that $\omega$ solves
\begin{align*}
	(i \partial_t  + \Delta ) \omega = \mu ( \abs{\widetilde{u} +\omega}^2 (\widetilde{u} +\omega) - \abs{\widetilde{u}}^2 \widetilde{u})-e .
\end{align*}
By Proposition \ref{prop-nonlinear}, we have
\begin{align}\label{fml-lwp-diff-X}
	\norm{\omega}_{X^{\frac{1}{2}} (J_k)} & \leq \norm{e^{i(t-T_k)\Delta} (u(T_k) - \widetilde{u}(T_k))}_{X^{\frac{1}{2}} (J_k)} + \norm{ \abs{\widetilde{u} +\omega}^2 (\widetilde{u} +\omega) - \abs{\widetilde{u}}^2 \widetilde{u}}_{N(J_k)} + \norm{e}_{N(J_k)} \notag\\
	& \leq \norm{u(T_k) - \widetilde{u}(T_k)}_{H^{\frac{1}{2}}} + C \norm{\omega}_{X^{\frac{1}{2}}(J_k)} \left( \norm{\widetilde{u}}_{X^{\frac{1}{2}}(J_k)} \norm{\widetilde{u}}_{Z'(J_k)}^2 +  \norm{\omega}_{X^{\frac{1}{2}}(J_k)} \norm{\omega}_{Z'(J_k)}^2\right) + \norm{e}_{N(J_k)} \notag\\
	& \leq \norm{u(T_k) - \widetilde{u}(T_k)}_{H^{\frac{1}{2}}} + C \varepsilon_0  \norm{\omega}_{X^{\frac{1}{2}}(J_k)} + \norm{e}_{N(J_k)},
\end{align}
where we have used $\eqref{fml-lwp-diff}$ in the last inequality. Thus, for sufficiently small $\varepsilon_0 > 0$
\begin{align*}
	\norm{\omega}_{Z'(J_k)} \leq C \norm{\omega}_{X^{\frac{1}{2}}(J_k)} \leq 4 C  (\norm{u(T_k) - \widetilde{u}(T_k)}_{H^{\frac{1}{2}}} + \norm{e}_{N(J_k)}) \leq 8 C \varepsilon_0 .
\end{align*}
Note that $J_k = I_k \cap J_u$ and \eqref{fml-lwp-diff-X} holds on $I_k \cap J_u$. Therefore, follows from Proposition \ref{prop-lwp-LWP}, the solution $u$ can be extended to the entire interval $I_k$, then inequalities \eqref{fml-lwp-diff} and  \eqref{fml-lwp-diff-X} also hold on entire $I_k$. 

\item [\textit{Step4}:] We take $\varepsilon_2 (M ) \leq \varepsilon_1 (M)$ sufficiently small and split $I$ into pieces
\begin{equation*}
	I = \bigcup_{k = 1}^K I_k,
\end{equation*}
where $K = O (\norm{\widetilde{u}}_{Z(I)} / \varepsilon_2)^6$. On each interval $I_k$, we can find 
\begin{align*}
	&\norm{\widetilde{u}}_{Z(I_k)} \leq \varepsilon_2 ,\\
	&\norm{e}_{N(I_k)} \leq \kappa \varepsilon_2 .
\end{align*}
Thus, we get $\eqref{fml-lwp-claim}$, $\eqref{fml-lwp-diff}$, $\eqref{fml-lwp-diff-X}$, $\eqref{fml-lwp-small-S3}$ and $\eqref{fml-lwp-small1}$ on $I_k$ for $1\le k \le K$. Then, Proposition \ref{prop-stability} follows.
\end{enumerate}
\end{proof}

\section{Euclidean profiles}
Our main goal in this section is to compare Euclidean and periodic solutions of both linear and nonlinear Schr\"odinger equations. For concentrated initial data, this comparison can be obtained in a very short time interval, it usually be called Euclidean window. However, beyond the Euclidean window, period solution can not be approached via proper transform to Euclidean solution. Thanks to the global well-posedness and scattering result, we can compare period solution with the linear Euclidean solution with initial data that are related to the Euclidean scattering data. In this process, we need a crucial lemma which controls the linear period solution with concentrated initial data.   
 
For a fixed radial function $\eta \in C^{\infty}_0(\R^3)$ with $\eta(x) = 1, |x| \le 1$ and $\eta(x) = 0, |x|\ge 2$. Then, we define
\begin{align}\label{fml-eprofile-def}
\begin{aligned}
Q_N \phi \in H^{\frac{1}{2}} (\R^3) , &\qquad (Q_N \phi)(x) = \eta(N^{-\frac{1}{2}} x) \phi (x),\\
\phi_N   \in H^{\frac{1}{2}} (\R^3),  & \qquad \phi_N (x)  = N^{\frac{1}{2}} (Q_N \phi) (Nx) ,\\
T_N \phi  \in H^{\frac{1}{2}}(\T^3) , & \qquad T_N \phi (y)  = \phi_N (\Psi^{-1} (y)),
\end{aligned}
\end{align}
where $\Psi : \{ x \in \R^3 : \abs{x} < 1 \} \to \T^3$, $\Psi (x) =x$ is the projection on the torus. Thanks to the cut off function $\eta(N^{-\frac{1}{2}} x)$, we can find $Q_N \phi$ is a compactly supported in the ball of radial $2N^{\frac{1}{2}}$, and equals to $\phi$ in the ball of radial $N^{\frac{1}{2}}$. We also note that $\phi_N$ is an $\dot{H}^{\frac{1}{2}}$-invariant rescaling of $Q_N \phi$ and $T_N \phi$ is obtained by transfer $\phi_N$ to a neighborhood of zero in $\T^3$. 

Now, we introduce $H^{\frac{1}{2}}(\R^3)$ global well-posedness of the defocusing cubic NLS.
\begin{theorem}[$H^{\frac{1}{2}}(\R^3)$ global well-posedness \cite{KeMeR3GWP2010}]\label{thm-ep-GWP}
	Suppose $v$ is a solution to Cauchy problem 
	\begin{equation}\label{cubic-R3}
		(i\partial_t +\Delta)v = v|v|^2, \qquad v(0) = \phi,
	\end{equation}
	with initial data $\phi \in \dot{H}^{\frac{1}{2}}(\R^3)$ in a maximal lifespan of $I$. Let $T_{+}(u_0) := \sup\limits_{x}\{ x\in \R: x\in I \}$, and assume that 
	\begin{equation*}
		\sup\limits_{0 < t < T_{+}(u_0)} \|v(t)\|_{\dot{H}^{\frac{1}{2}}(\R^3)} = A < \infty.
	\end{equation*}
	Then $T_{+}(u_0) = +\infty$ and exist $\phi^{+}$ and $\phi^{-}$ such that
	\begin{equation*}
		\lim\limits_{t \to \pm\infty} \| v(t) - e^{it\Delta}\phi^{\pm} \|_{\dot{H}^{\frac{1}{2}}(\R^3)} = 0.
	\end{equation*}	
	Moreover, we have
	\begin{equation}
		\| |\nabla_{\R^3}|^{\frac{1}{2}} v\|_{(L_t^\infty L_x^2\cap L^2_tL_x^6)(\R\times\R^3)}
		\leq C(E_{\R^3}(\phi)) < +\infty.
	\end{equation}
	If initial data is sufficiently smooth, that is $\phi\in H^5(\R^3)$, we also have $v\in C(\R: H^5(\R^3))$ and
	\begin{equation}
		\sup_{t\in\R} \|v(t)\|_{H^5(\R^3)} \lesssim_{\|\phi\|_{H^5(\R^3)}} 1.
	\end{equation}
\end{theorem}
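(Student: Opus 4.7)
The plan is to follow the concentration-compactness/rigidity roadmap of Kenig-Merle. First, I would set up the small-data theory in the critical space: using the Strichartz estimates on $\R^3$ with $1/2$ derivatives (e.g. in spaces built from $L_t^q L_x^r$ with $(q,r) = (\infty,2), (2,6)$, together with an appropriate Strichartz pair at $\dot H^{1/2}$ regularity such as $L_{t,x}^{5}$), establish local well-posedness of \eqref{cubic-R3} for data in $\dot H^{1/2}(\R^3)$, small-data scattering, and a stability/perturbation lemma analogous to Proposition~\ref{prop-stability}. This provides the basic functional framework.

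Second, I would establish a linear profile decomposition in $\dot H^{1/2}(\R^3)$ adapted to the non-compact symmetry group: translations in space, translations in time, and $\dot H^{1/2}$-invariant parabolic rescaling $\phi \mapsto \lambda^{1/2}\phi(\lambda\cdot)$. This requires a refined Strichartz inequality (e.g.\ in $L_{t,x}^{5}$) to harvest concentration cores from any bounded sequence in $\dot H^{1/2}$. Combined with the stability theory, one obtains nonlinear profiles and a Pythagorean-type decomposition of the $\dot H^{1/2}$ norm and of the scattering functional.

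Third, supposing the theorem fails, I would define the threshold
\begin{equation*}
A_* := \sup\{A : \text{every solution with }\sup_{t\in I}\|v(t)\|_{\dot H^{1/2}} \le A\text{ scatters}\} < \infty,
\end{equation*}
and use the profile decomposition together with a contradiction argument to extract a minimal non-scattering (critical) solution $v_*$ whose orbit $\{v_*(t)\}$ is precompact in $\dot H^{1/2}(\R^3)$ modulo the non-compact symmetries, i.e.\ almost periodic modulo $(x(t),\lambda(t))$. A further reduction classifies $v_*$ into a finite list of enemies: soliton-like ($\lambda(t)\equiv 1$), self-similar ($\lambda(t) \sim t^{-1/2}$), or a low-to-high frequency cascade.

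The hard part, and the real substance of Kenig-Merle's argument, is the rigidity step: showing each enemy is impossible. Because $s_c = 1/2$ is neither mass- nor energy-critical, no conservation law sits at the scaling level, and the $L_t^\infty \dot H^{1/2}$ hypothesis must shoulder the entire burden. My approach would be to develop long-time Strichartz estimates at the $\dot H^{1/2}$ level, which upgrade almost periodicity into quantitative frequency-localization bounds; then prove a frequency-localized interaction Morawetz estimate of Lin-Strauss type, cut off to the bulk frequencies $|\xi| \sim N(t)$ dictated by $\lambda(t)$. The output is a bound on a weighted $L_{t,x}^4$-type quantity that, when combined with a lower bound coming from the concentration of $v_*$, forces $v_* \equiv 0$, contradicting the failure of scattering. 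The soliton case is ruled out by showing the Morawetz bound grows linearly in $T$ while the solution's contribution is bounded; the cascade and self-similar cases are ruled out by additional regularity upgrades (using double Duhamel / frequency cascades) that force $v_* \in L^2$ and then $v_* = 0$ via mass conservation. The smoothness persistence claim for $\phi \in H^5$ follows by standard persistence-of-regularity once scattering is in hand. The main obstacle throughout is the lack of a conserved critical norm, which forces the Morawetz machinery to be frequency-localized and the regularity upgrades in the cascade/self-similar regimes to be carried out by hand.
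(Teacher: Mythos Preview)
The paper does not prove this theorem at all: Theorem~\ref{thm-ep-GWP} is quoted directly from Kenig--Merle \cite{KeMeR3GWP2010} and used as a black box in the construction of Euclidean profiles (Lemma~\ref{lem-window-profile} and Proposition~\ref{prop-ep-Ec-pro}). There is therefore no proof in the paper to compare against; the authors simply invoke the result and move on.

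That said, your sketch is in the right spirit but is more elaborate than what Kenig--Merle actually do in \cite{KeMeR3GWP2010}, and in places conflates their argument with later refinements. Kenig--Merle do run concentration-compactness/rigidity, but their rigidity step is considerably lighter than what you describe: after extracting a critical element whose orbit is precompact in $\dot H^{1/2}$ modulo symmetries, they use the classical Lin--Strauss Morawetz inequality (not a frequency-localized interaction Morawetz), together with a lower bound on the scale function $\lambda(t)$ obtained from compactness, to derive a contradiction. They do not invoke the soliton / self-similar / cascade trichotomy (that is the Killip--Visan taxonomy), nor do they need long-time Strichartz estimates or double Duhamel regularity upgrades --- those tools were developed later by Dodson, Murphy, and others for related problems. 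Your proposed machinery would also work, but it is overkill here and historically anachronistic relative to the cited reference.
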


For fixed $\phi \in \dot{H}^{\frac{1}{2}}(\R^3)$, we consider solution on tori with concentrated initial data $T_N \phi$. For sufficiently large $T > 0$ and $N > 0$, we divide time interval $(-T,T)$ into the Euclidean window $(-TN^{-2} , TN^{-2})$ and beyond the Euclidean window $(-T,T)/(-TN^{-2} , TN^{-2})$. On the Euclidean window, the comparison is stated as follows:
\begin{lemma}\label{lem-window-profile}
	For fixed $\phi \in \dot{H}^{\frac{1}{2}} (\R^3)$ and $T_0 \in (0, \infty)$ are given. Then we have following conclusions:
	\begin{enumerate}
		\item 
		There exists $N_0 = N_0(\phi, T_0)$ sufficiently large such that for any $N \geq N_0$ there is a unique solution $U_N \in C ( (-T_0 N^{-2} , T_0 N^{-2}) , H^{\frac{1}{2}} (\T^3) )$ to the Cauchy problem
		\begin{align}\label{fml-ep-eqT}
			(i\partial_t + \Delta_{\T^3} ) U_N = \abs{U_N}^2 U_N, \quad U_N(0) = T_N \phi.
		\end{align}
		Moreover, for any $N \geq N_0$
		\begin{align*}
			\norm{U_N}_{X^{\frac{1}{2}} ((-T_0 N^{-2} , T_0 N^{-2}))} \lesssim  1,
		\end{align*}
		uniformly in $N$ and $T_0$
		
		\item 
		Given $\phi^{\prime} \in H^s(\R^3)$ with $s \ge 5$ and denote $v^{\prime} \in C(\R,H^s(\R^3))$ the solution to the Cauchy problem
		\begin{align*}
			(i\partial_t + \Delta_{\R^2}) v'  = \abs{v' }^2 v' , \quad v' (0)= \phi' .
		\end{align*}
		For $N \ge R \ge 1$ we define
		\begin{align*}
			\begin{aligned}
				v_R'  (t,x)  = \eta(\frac{x}{R}) v' (t,x), &\qquad  (t,x) \in (-T_0 , T_0) \times \R^3 ,\\
				v_{R, N}'  (t,x) = N v_R' (Nx, N^2 t), & \qquad (t,x) \in (-T_0N^{-2} , T_0 N^{-2}) \times \R^3, \\
				V_{R,N} (t,y)  = v_{R,N}'  (t , \Psi^{-1} (y) ), &\qquad  (t,y) \in  (-T_0N^{-2} , T_0 N^{-2}) \times \T^3.
			\end{aligned}
		\end{align*}
		Then, there exists $\varepsilon_1 = \varepsilon_1(E(\phi)) \in ( 0,1]$ is sufficiently small, such that for all $0 < \varepsilon < \varepsilon_1$ and $\phi'  \in H^s (\R^3)$ with $\norm{\phi - \phi' }_{\dot{H}^{\frac{1}{2}} (\R^3)} \leq \varepsilon_1$ we can find $R_0 = R_0(T_0,\phi^{\prime}) \ge 1$ such that for any $R \geq R_0$
		\begin{equation*}
			\limsup_{N \to \infty} \norm{U_N - V_{R,N}}_{X^{\frac{1}{2}} ((-T_0 N^{-2} , T_0 N^{-2}))} \lesssim \varepsilon_1 ,
		\end{equation*}
		uniformly in $N,R$ and $T_0$.
	\end{enumerate}
	
\end{lemma}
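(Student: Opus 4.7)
The plan is to construct $V_{R,N}$ first as an approximate solution to the periodic NLS on the short window, then deduce both the existence of $U_N$ in (1) and the comparison in (2) simultaneously by invoking the stability result (Proposition \ref{prop-stability}). Throughout, I will treat part (2) as the main statement and recover part (1) at the end by density: given $\phi \in \dot H^{1/2}(\R^3)$, approximate it in $\dot H^{1/2}$ by $\phi'\in H^5(\R^3)$ with $\norm{\phi-\phi'}_{\dot H^{1/2}}\le \varepsilon_1$; the comparison from part (2) and the triangle inequality then yield the uniform $X^{1/2}$ bound for $U_N$.

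The first step is to invoke Theorem \ref{thm-ep-GWP} on $\R^3$ for the initial data $\phi'$: this gives a global solution $v'\in C(\R,H^5(\R^3))$ with $\norm{v'}_{L^\infty_t H^5}\lesssim 1$ and with $|\nabla|^{1/2}v'\in L^2_t L^6_x\cap L^\infty_tL^2_x$. Localizing, $v_R'=\eta(x/R)v'$ solves, on $\R^3$, the perturbed equation
\begin{equation*}
(i\partial_t+\Delta_{\R^3})v_R' \;=\; |v_R'|^2 v_R' + e_R',
\end{equation*}
where the error $e_R' = [\Delta,\eta(\cdot/R)]v' + \bigl(\eta(x/R)-\eta(x/R)^3\bigr)|v'|^2 v'$ contains at least one derivative of $\eta$ or a cutoff-difference, both of which are supported in $\{|x|\ge R\}$. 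Using $\phi'\in H^5$ and the higher regularity bounds from Theorem \ref{thm-ep-GWP}, one verifies that the relevant spacetime norms of $e_R'$ tend to $0$ as $R\to\infty$ (uniformly on $[-T_0,T_0]$, since $v'$ decays as $|x|\to\infty$ inside its energy bound). Rescaling by $v_{R,N}'(t,x)=Nv_R'(N^2 t,Nx)$ transforms the above identity to the corresponding one on the window $(-T_0N^{-2},T_0N^{-2})$ with a rescaled error, and for $N>2R$ the support is contained in $\{|x|<1\}$ so that the transfer map $\Psi$ is an isometry that identifies $\Delta_{\R^3}$ with $\Delta_{\T^3}$ on these functions. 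Hence $V_{R,N}$ satisfies
\begin{equation*}
(i\partial_t+\Delta_{\T^3})V_{R,N} \;=\; |V_{R,N}|^2 V_{R,N} + e_{R,N},
\end{equation*}
where $e_{R,N}$ is the transferred rescaled error.

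The remaining verifications are: (a) the size bound $\norm{V_{R,N}}_{X^{1/2}((-T_0N^{-2},T_0N^{-2}))}\lesssim 1$ uniformly in $N,R,T_0$, which follows from the $\dot H^{1/2}$-criticality of the rescaling together with the Strichartz-type embedding of $X^{1/2}$, combined with the $L^\infty_t H^{1/2}\cap L^2_t W^{1/2,6}$ control of $v'$ from Theorem \ref{thm-ep-GWP}; (b) the error bound $\limsup_{N\to\infty}\norm{e_{R,N}}_{N((-T_0N^{-2},T_0N^{-2}))}\to 0$ as $R\to\infty$, which reduces by duality and the $DU^2_\Delta$/$V^2_\Delta$ duality (Proposition \ref{prop-pre-dual}) to an estimate for $e_R'$ that is testable in $L^p$-type spacetime norms and that gains a factor of $R^{-1}$ from the commutator together with the spatial decay of $v'$; and (c) the initial data comparison $\norm{V_{R,N}(0)-T_N\phi}_{H^{1/2}(\T^3)}\le \varepsilon_1 + o_{R,N}(1)$, which is immediate from $\norm{\phi'-\phi}_{\dot H^{1/2}}\le \varepsilon_1$, the scale-invariance of the rescaling, and the fact that $\eta(N^{-1/2}x)$ and $\eta(x/R)$ both act as the identity on the bulk of a fixed $\dot H^{1/2}$ function once $R,N$ are large enough. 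With all three items in place, Proposition \ref{prop-stability} produces $U_N$ on the full window and yields $\norm{U_N-V_{R,N}}_{X^{1/2}}\lesssim \varepsilon_1$, which is the claim. The most delicate step is (b): one must establish that the $N$-norm control of the commutator and cutoff-difference errors is genuinely small, since these terms, though spatially supported far from the origin in the Euclidean picture, must be measured after rescaling and transfer in the critical atomic norm on $\T^3$; this is where the additional regularity $\phi'\in H^5$ is crucially exploited to absorb derivative losses from the Strichartz estimate on $\T^3$.
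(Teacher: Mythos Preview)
Your approach is essentially the paper's: build $V_{R,N}$ as an approximate solution, verify the three hypotheses of the stability proposition, and conclude. One point to correct: in item~(a) you ask for $\norm{V_{R,N}}_{X^{1/2}}\lesssim 1$ and say this follows from a ``Strichartz-type embedding of $X^{1/2}$'', but the embedding goes the other way ($X^{1/2}$ controls Strichartz/$Z$ norms, not conversely), and in fact you neither need nor can readily prove an $X^{1/2}$ bound on the approximate solution. What Proposition~\ref{prop-stability} actually requires is $\norm{V_{R,N}}_{Z}+\norm{V_{R,N}}_{L^\infty_t H^{1/2}}\le M$, and these are exactly the quantities the paper bounds directly from the $H^5$ regularity of $v'$ (the $Z$-norm via Littlewood--Paley and Sobolev, the $L^\infty_t H^{1/2}$ via scaling). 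The $X^{1/2}$ bound is an \emph{output} of stability, not an input. For item~(b), the paper makes the duality step concrete by reducing $\norm{E_{R,N}}_{N}$ to $\norm{|\nabla|^{1/2}e_R}_{L^1_tL^2_x([-T_0,T_0]\times\R^3)}$ and then interpolating between the $L^2$ and $\dot H^1$ norms of $e_R$, both of which vanish as $R\to\infty$ by the $H^5$ control; this is the specific form of the ``$L^p$-type'' estimate you allude to.
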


\begin{proof}
	It is sufficient to prove $(2)$, since stability result in Proposition \ref{prop-stability} and the fact that $V_{R,N}$ is an almost solution to $\eqref{fml-ep-eqT}$ implies $(1)$ immediately.  

	By Theorem \ref{thm-ep-GWP}, we have $v'$ exists globally and satisfies the following 
	\begin{align}\label{fml-ep-norm-es}
		\begin{aligned}
			\norm{ \abs{\nabla_{\R^3}}^{\frac{1}{2}} v'}_{L_t^{q} L_x^r  (\R \times \R^3)} \lesssim 1\\
			\sup_{t \in \R} \norm{v' (t)}_{H^5 (\R^3)} \lesssim_{ \norm{\phi'}_{H^5 (\R^3)}} 1 ,
		\end{aligned}
	\end{align}
	where $\frac{2}{q} = 3(\frac{1}{2} - \frac{1}{r})$.
	
	Note that $v^{\prime}(t,x)$ is the global solution to $\eqref{cubic-R3}$, hence $v'_{R} (t,x) = \eta(\frac{x}{R}) v'(t,x)$ solves the following approximate equation
	\begin{align*}
		(i \partial_t  + \Delta_{\R^2}) v'_{R} = \abs{v'_{R}}^2 v'_{R} + e_R (t,x) ,
	\end{align*}
	where
	\begin{align*}
		e_R (t,x) = \mu (\eta (\frac{x}{R}) - \eta^3 (\frac{x}{R}) ) \abs{v'}^2 v' + R^{-2} v'(t,x) (\Delta_{\R^2} \eta) (\frac{x}{R}) + 2R^{-1} \sum_{j=1}^2 \partial_j v'(t,x) \partial_j \eta (\frac{x}{R}) .
	\end{align*}
	Then $v'_{R,N} (t,x) $ solves
	\begin{align*}
		(i \partial_t  + \Delta_{\R^3}) v'_{R,N} =  \abs{v'_{R,N}}^2 v'_{R,N} + e_{R,N} (t,x) ,
	\end{align*}
	where
	\begin{align*}
		e_{R,N}(t,x) = N^3 e_R (N^2t, Nx) .
	\end{align*}
	Note that $V_{R,N} (t,y) = v'_{R,N} (t, \Psi^{-1} (y))$ if we take $N \ge 10R$. So, $V_{R,N}$ solves the following approximating equation
	\begin{align*}
		(i \partial_t  + \Delta_{\R^3}) V_{R,N} (t,y)=  \abs{V_{R,N}}^2  V_{R,N} + E_{R,N} (t,y) ,
	\end{align*}
	where
	\begin{align*}
		E_{R,N} (t,y) = e_{R,N} (t, \Psi^{-1} (y)).
	\end{align*}
	To apply Proposition $\ref{prop-stability}$, we need verity three conditions:
	\begin{enumerate}
		\item
		$\norm{V_{R,N}}_{L_t^{\infty} H_x^{\frac{1}{2}} ([-T_0 N^{-2},T_0 N^{-2}] \times \T^3)} + \norm{V_{R,N}}_{Z([-T_0 N^{-2},T_0 N^{-2}])} \lesssim 1$,
		\item
		$\norm{T_N \phi - V_{R,N}(0)}_{H_x^{\frac{1}{2}}} \leq \varepsilon$,
		\item
		$\norm{E_{R,N}}_{N([-T_0 N^{-2}, T_0 N^{-2}])} \leq \varepsilon$.
	\end{enumerate}
	
	\textbf{(1): $\norm{V_{R,N}}_{L_t^{\infty} H_x^{\frac{1}{2}} ([-T_0 N^{-2},T_0 N^{-2}] \times \T^3)} + \norm{V_{R,N}}_{Z([-T_0 N^{-2},T_0 N^{-2}])} \lesssim 1$} Thanks to the global existence of $v'(t,x)$, we can find that $V_{R,N}(t,y)$ also exists globally and satisfies
		\begin{align*}
			& \quad \sup_{t \in [-T_0 N^{-2} , T_0 N^{-2}]} \norm{V_{R,N} (t)}_{H_x^{\frac{1}{2}} (\T^3)} \leq \sup_{t \in [-T_0 N^{-2} , T_0 N^{-2}]} \norm{v'_{R,N} (t)}_{H_x^{\frac{1}{2}} (\T^3)} \\
			& = \sup_{t \in [-T_0 N^{-2} , T_0 N^{-2}]} \norm{N v'_R (N^2t, Nx)}_{H_x^{\frac{1}{2}} (\R^3)} \\
			& = \sup_{t \in [-T_0 N^{-2} , T_0 N^{-2}]} \frac{1}{N^{\frac{1}{2}}} \norm{v'_R (N^2t)}_{L_x^2 (\R^3)} + \norm{v'_R (N^2 t)}_{\dot{H}_x^{\frac{1}{2}} (\R^3)} \\
			& \leq \sup_{t \in [-T_0 , T_0 ]} \norm{v'_R}_{H_x^{\frac{1}{2}} (\R^3)} = \sup_{t \in [-T_0 , T_0 ]}  \norm{\eta (\frac{x}{R}) v'(t,x)}_{H_x^{\frac{1}{2}} (\R^3)}\\
			& \leq \sup_{t \in [-T_0 , T_0 ]} \norm{\eta(\frac{x}{R}) v'(t,x) }_{L_x^2(\R^3)} + \norm{\abs{\nabla}^{\frac{1}{2}} \eta(\frac{x}{R}) v'(t,x)}_{L_x^2 (\R^3)} + \norm{\eta (\frac{x}{R}) \abs{\nabla}^{\frac{1}{2}} v'(t,x)}_{L_x^2 (\R^3)} \\
			& \leq 2 \norm{v'(t,x)}_{H_x^{\frac{1}{2}} (\R^3)} \leq 2 \norm{\phi' }_{H^5 (\R^3)} .
		\end{align*}
		By Littlewood-Paley decomposition, Sobolev embedding and the fact that $\norm{v'(t)}_{L^{\infty}(\R,H_x^5)}$ is uniformly bounded, we obtain
		\begin{align*}
			\norm{V_{R,N}}_{Z([-T_0 N^{-2} ,T_0 N^{-2}])} & =  \sup_{J \subset [-T_0 N^{-2}, T_0 N^{-2}]} \parenthese{\sum_{M \, dyadic} M^{5 - p} \norm{P_M V_{R,N}}_{L_{t,x}^p(J \times \T^3)}^p}^{\frac{1}{p}} \\
			& \lesssim \sup_{J \subset [-T_0 N^{-2}, T_0 N^{-2}]} \norm{ \inner{\nabla}^{\frac{5}{p}-1} V_{R,N}}_{L_{t,x}^p(J \times \T^3)}\\
			& \lesssim \sup_{J \subset [-T_0 N^{-2}, T_0 N^{-2}]} \norm{ \inner{\nabla}^{\frac{1}{2}} V_{R,N}}_{L_t^p L_x^{\frac{6p}{3p-2}}(J \times \T^3)} \\
			& \lesssim \norm{\inner{\nabla}^{\frac{1}{2}} v^{\prime}_{R,N}}_{L_t^p L_x^{\frac{6p}{3p-2}}([-T_0 N^{-2}, T_0 N^{-2}] \times \R^3)} \\
			&\lesssim \sup_t \norm{v'(t)}_{H_x^5} \lesssim \norm{\phi^{\prime}}_{H^5_x},
		\end{align*}
		for any $p >\frac{10}{3}$. Then $(1)$ implies .

		\textbf{(2): $\norm{T_N \phi - V_{R,N}(0)}_{H_x^{\frac{1}{2}}} \leq \varepsilon$.} By definition and simple calculus, we get
		\begin{align*}
			\norm{T_N \phi - V_{R,N} (0)}_{H_x^{\frac{1}{2}} (\T^3)} & \leq \norm{\phi_N (\Psi^{-1} (y) - \phi'_{R,N} (\Psi^{-1} (y)}_{H_x^{\frac{1}{2}}(\T^3)} \\
			&\leq \norm{\phi_N -\phi'_{R,N}}_{\dot{H}_x^{\frac{1}{2}} (\T^3)}\\ 
			& = \norm{\eta(N^{-\frac{1}{2}} x) \phi (x) - \eta(N^{-\frac{1}{2}} x) \phi' (x) }_{\dot{H}_x^{\frac{1}{2}} (\R^3)} \\
			& \leq \norm{\eta(N^{-\frac{1}{2}} x) \phi (x) - \phi(x)}_{\dot{H}_x^{\frac{1}{2}} (\R^3)} + \norm{\phi - \phi'}_{\dot{H}_x^{\frac{1}{2}} (\R^3)} + \norm{\phi' - \eta(N^{-\frac{1}{2}} x) \phi' (x) }_{\dot{H}_x^{\frac{1}{2}} (\R^3)}.
		\end{align*}
		By taking $R > R_0$ with $R_0$ large enough and notice that $N \ge 10R$, we can make 
		\begin{equation*}
			\norm{\eta(N^{-\frac{1}{2}} x) \phi (x) - \phi(x)}_{\dot{H}_x^{\frac{1}{2}} (\R^3)} + \norm{\phi - \phi'}_{\dot{H}_x^{\frac{1}{2}} (\R^3)} + \norm{\phi' - \eta(N^{-\frac{1}{2}} x) \phi' (x) }_{\dot{H}_x^{\frac{1}{2}} (\R^3)},
		\end{equation*}
		sufficiently small.

		\textbf{(3): $\norm{E_{R,N}}_{N([-T_0 N^{-2}, T_0 N^{-2}])} \leq \varepsilon$.} By duality, we get
		\begin{align*}
			\norm{E_{R,N}}_{N([-T_0 N^{-2} ,T_0 N^{-2}])} & = \norm{\int_0^t e^{i(t-s) \Delta} E_{R,N} (s) \, ds}_{X^{\frac{1}{2}} ([-T_0 N^{-2} ,T_0 N^{-2}])} \\
			& \leq \sup_{\norm{u_0}_{Y^{-\frac{1}{2}}} =1} \abs{\int_{-T_0 N^{-2}}^{T_0N^{-2}} \int_{\T^3} \bar{u}_0 \cdot E_{R,N} \, dx dt} \\
			& \leq \sup_{\norm{u_0}_{Y^{-\frac{1}{2}}} =1} \norm{u_0}_{Y^{-\frac{1}{2}}} \norm{\abs{\nabla}^{\frac{1}{2}} E_{R,N}}_{L_t^1 L_x^2 ([-T_0 N^{-2} ,T_0 N^{-2}] \times \T^3)} \\
			& \leq \norm{\abs{\nabla_{\R^3} }^{\frac{1}{2}}e_{R,N}}_{L_t^1 L_x^2 ([-T_0 N^{-2} ,T_0 N^{-2}] \times \R^3)} = \norm{\abs{\nabla_{\R^3}}^{\frac{1}{2}} e_{R}}_{L_t^1 L_x^2 ([-T_0 ,T_0 ] \times \R^3)} .
		\end{align*}
		We claim that both
		\begin{equation}\label{fml-ep-L2-tend0}
			\lim_{R \to \infty} \norm{ e_R}_{L_t^1 L_x^2 ([-T_0, T_0] \times \R^3)} = 0,
		\end{equation}
		and
		\begin{equation}\label{fml-ep-H1-tend0}
			\lim_{R \to \infty} \norm{ |\nabla_{\R^3}| e_R}_{L_t^1 L_x^2 ([-T_0, T_0] \times \R^3)} = 0
		\end{equation}
		hold. Interpolating between then, we obtain
		\begin{equation*}
			\lim_{R \to \infty} \norm{ |\nabla_{\R^3}|^{\frac{1}{2}} e_R}_{L_t^1 L_x^2 ([-T_0, T_0] \times \R^3)} = 0,
		\end{equation*}
		which gives
		\begin{equation*}
			\lim_{R \to \infty} \norm{ |\nabla|^{\frac{1}{2}} E_{R,N}}_{L_t^1 L_x^2 ([-T_0N^{-2}, T_0N^{-2}] \times \T^3)} = 0.
		\end{equation*}
		Therefore, $(3)$ holds. 
		
		Then, we prove \eqref{fml-ep-L2-tend0} and \eqref{fml-ep-H1-tend0}. It is not hard to find $e_R$ has the bound:
		\begin{align*}
			|e_{R} (t,x)| & = \abs{   \mu (\eta (\frac{x}{R}) - \eta^3 (\frac{x}{R}) ) \abs{v'}^2 v' + R^{-2} v'(t,x) (\Delta_{\R^3} \eta) (\frac{x}{R}) + 2R^{-1} \sum_{j=1}^2 \partial_j v'(t,x) \partial_j \eta (\frac{x}{R}) }\\
			& \lesssim \abs{\mu (\eta (\frac{x}{R}) - \eta^3 (\frac{x}{R}) ) v'} + R^{-2} \abs{(\Delta_{\R^2} \eta) (\frac{x}{R}) v'} + R^{-1} \sum_{j=1}^2 \abs{\partial_j v'(t,x) \partial_j \eta (\frac{x}{R})},
		\end{align*}
		which implies $\eqref{fml-ep-L2-tend0}$. Note that $\partial_{x_k}e_R(t,x),k=1,2,3$ can be written as linear combination of the form
		\begin{equation*}
			1_{[R,2R]}(|x|)\bigg( |v^{\prime}(t,x)| + \sum_{i}^3|\partial_{x_i} v^{\prime}(t,x)| + \sum_{i}^3\sum_{j}^3|\partial_{x_i}\partial_{x_j} v^{\prime}(t,x)| \bigg),
		\end{equation*}
		therefore, $\eqref{fml-ep-H1-tend0}$ holds.

\end{proof}

We first introduce the extinction lemma, which shows the linear evolution beyond the Euclidean window can be arbitrarily small.
\begin{lemma}[Extinction Lemma]\label{lem-ep-extic} 
		Let $\phi\in\dot{H}^{\frac{1}{2}}(\R^3)$. For any $\varepsilon > 0$, there exists $T = T(\phi,\varepsilon) > 0$ and $N_0 = N_0(\phi,\varepsilon)$ such that for all $N > N_0$
		\begin{equation*}
			\|e^{it\Delta} T_N \phi\|_{Z(TN^{-2},T^{-1})} < \varepsilon.
		\end{equation*}
\end{lemma}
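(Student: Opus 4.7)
The plan is to approximate $\phi$ by a Schwartz function, compare the torus linear evolution with a rescaled Euclidean evolution for the smooth part, and exploit the fact that a long-time tail of the $\dot H^{1/2}$-critical Strichartz norm on $\R^3$ vanishes. The key scaling is $\tilde t=N^2t$, $\tilde y=Ny$, under which the interval $t\in(TN^{-2},T^{-1})$ becomes $\tilde t\in(T,N^2/T)$; taking $T$ large thus forces a long-time Strichartz tail on $\R^3$, which can be made arbitrarily small.

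\textbf{First,} given $\varepsilon>0$, choose $\psi\in\mathcal{S}(\R^3)$ with $\|\phi-\psi\|_{\dot H^{1/2}(\R^3)}<\varepsilon_0$, where $\varepsilon_0$ is to be determined. The embedding $X^{1/2}\hookrightarrow Z$ from Remark \ref{rmk Z<X}, Proposition \ref{prop-pre-Xs-Hs}, and the uniform bound $\|T_Nf\|_{H^{1/2}(\T^3)}\lesssim\|f\|_{\dot H^{1/2}(\R^3)}$ (which follows from the scaling in \eqref{fml-eprofile-def} combined with the Sobolev embedding $\dot H^{1/2}(\R^3)\hookrightarrow L^3(\R^3)$ to absorb the $L^2$-part of $H^{1/2}$) will give
\[
\|e^{it\Delta}T_N(\phi-\psi)\|_{Z((TN^{-2},T^{-1}))}\lesssim\|T_N(\phi-\psi)\|_{H^{1/2}(\T^3)}\lesssim\varepsilon_0.
\]

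\textbf{Next,} for the smooth part set $v(t,x)=e^{it\Delta_{\R^3}}\psi(x)$ and introduce the linear analog of the construction in Lemma \ref{lem-window-profile}, namely $W_{R,N}(t,y)=N\eta(\Psi^{-1}(y)/R)v(N^2t,N\Psi^{-1}(y))$. A Poisson-summation argument shows that $\|e^{it\Delta_{\T^3}}T_N\psi - W_{R,N}\|_{X^{1/2}((TN^{-2},T^{-1}))}\to 0$ as first $R\to\infty$ and then $N\to\infty$, since in the regime $|t|\le T^{-1}\ll 1$ the image-correction terms in the periodization are negligible because the Euclidean wave remains localized on a scale $\ll 1$. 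Under the change of variables $\tilde t=N^2t$, $\tilde y=Ny$, the $L^p_{t,x}$-norms of $W_{R,N}$ on $(TN^{-2},T^{-1})$ reduce to $L^p_{\tilde t,\tilde y}$-norms of a spatial cutoff of $v$ on $(T,N^2/T)\times\R^3$. The $\dot H^{1/2}$-critical Strichartz inequality
\[
\|v\|_{L^5_{t,x}(\R\times\R^3)}\lesssim\|\psi\|_{\dot H^{1/2}(\R^3)}
\]
combined with absolute continuity of the $L^5$-norm yields $\|v\|_{L^5((T,\infty)\times\R^3)}\to 0$ as $T\to\infty$. The extension to the other $L^p_{t,x}$-norms with $p>10/3$ appearing in the definition of $Z$ follows by H\"older interpolation against the uniform $L^\infty_tH^s_x$-bound on the smooth profile, and the dyadic weights $M^{5-p}$ in the $Z$-norm sum are summable because $T_N\psi$ is essentially frequency-localized at scale $M\lesssim N$. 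Choosing $T=T(\phi,\varepsilon)$ sufficiently large and then $N_0=N_0(\phi,\varepsilon)$ sufficiently large will complete the proof.

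\textbf{The main obstacle} will be rigorously justifying the torus-Euclidean comparison on the full interval $(TN^{-2},T^{-1})$, which extends well beyond the narrow Euclidean window $|t|\lesssim N^{-2}$ of Lemma \ref{lem-window-profile}. Since $T^{-1}\ll 1$, the image terms from the Poisson representation of the torus kernel are evaluated at distances of order $1$, and with $N\gg R$ their contribution is rapidly suppressed by the spatial decay of the Euclidean wave multiplied by the cutoff $\eta(\cdot/R)$; but tracking this decay through the Littlewood-Paley structure of the $Z$-norm, and combining it with the Strichartz tail, is where most of the technical care is required. The fact that $T_N\psi$ is frequency-localized at scale $N$ up to Schwartz tails ensures that only dyadic pieces $M\lesssim N$ contribute significantly, so the combined bound ultimately reduces to two quantities both made small by taking $T$ and $N_0$ large in terms of $\phi,\varepsilon$.
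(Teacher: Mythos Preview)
Your plan has a genuine gap at precisely the step you flag as ``the main obstacle''. The comparison $\|e^{it\Delta_{\T^3}}T_N\psi - W_{R,N}\|_{X^{1/2}((TN^{-2},T^{-1}))}\to 0$ is false, and your justification that ``the Euclidean wave remains localized on a scale $\ll 1$'' is incorrect. The data $T_N\psi$ sits at frequency $\sim N$, so the Euclidean wave packets travel at group velocity $\sim N$; after time $t\sim T^{-1}$ they have moved distance $\sim N/T\gg 1$, filling many fundamental domains. In the exact Poisson identity $e^{it\Delta_{\T^3}}T_N\psi(y)=\sum_{m\in\Z^3}(e^{it\Delta_{\R^3}}\phi_N)(\Psi^{-1}(y)+2\pi m)$, the image terms $m\neq 0$ are therefore not negligible once $t\gg N^{-1}$, and periodization can \emph{increase} $L^p$ norms by constructive interference. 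This is not a technicality: the torus linear flow is quasi-periodic and does not inherit the Euclidean long-time Strichartz decay, so you cannot deduce smallness of the $Z$-norm on $(TN^{-2},T^{-1})$ from the Euclidean $L^5$ tail on $(T,\infty)$.

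The paper's proof avoids any such comparison. It works directly on the torus via Bourgain's Weyl-sum bound for the kernel $K_M=e^{it\Delta}P_{\le M}\delta_0$: Dirichlet's lemma on the rationals $a_j/q_j$ approximating $t/2\pi$ forces either $q_j\gtrsim T$ or $a_j=0$ on the interval $(TM^{-2},T^{-1})$, yielding the genuinely torus dispersive estimate $\|K_M\|_{L^\infty_{t,x}((TM^{-2},T^{-1}))}\lesssim T^{-3/2}M^3$. One then combines this with $\|T_N\phi\|_{L^1(\T^3)}\lesssim N^{-5/2}$ via Young's inequality to get an $L^\infty_{t,x}$ bound carrying a negative power of $T$, interpolates against the frequency-localized Strichartz estimate at the endpoint $p=\frac{10}{3}+$, and sums over dyadic pieces (splitting into $K\le NT^{-1/100}$, $K\ge NT^{1/100}$, and the middle range). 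The smallness comes from the number-theoretic dispersive bound, not from any Euclidean tail; your reduction misses this essential ingredient.
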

To prove the extinction lemma, we introduce the famous Dirichlet's lemma first
\begin{lemma}[Dirichlet's lemma]\label{lem-ep-Diri}
	For any real number $\alpha$, and positive integer $N$, there exist integers $p$ and $q$ such that $1 \leq q \leq N$ and $\abs{q \alpha - p} < \frac{1}{N}$.
\end{lemma}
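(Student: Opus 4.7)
The plan is to prove this classical result by the pigeonhole principle, which is the standard and essentially only elementary route. No structure from the PDE portion of the paper is needed.

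First I would consider the $N+1$ real numbers
\begin{equation*}
\{0\cdot\alpha\},\ \{1\cdot\alpha\},\ \{2\cdot\alpha\},\ \ldots,\ \{N\cdot\alpha\},
\end{equation*}
where $\{x\} := x - \lfloor x\rfloor \in [0,1)$ denotes the fractional part. All of these numbers lie in the half-open interval $[0,1)$. I would then partition $[0,1)$ into the $N$ disjoint subintervals
\begin{equation*}
I_k := \bigl[\tfrac{k}{N},\tfrac{k+1}{N}\bigr),\qquad k = 0,1,\ldots,N-1,
\end{equation*}
each of length $\tfrac{1}{N}$.

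Next, since we have $N+1$ numbers distributed among $N$ intervals, the pigeonhole principle furnishes two indices $0 \le i < j \le N$ with $\{i\alpha\}$ and $\{j\alpha\}$ in the same subinterval $I_k$. Consequently
\begin{equation*}
\bigl|\{j\alpha\} - \{i\alpha\}\bigr| < \tfrac{1}{N}.
\end{equation*}
Writing out the fractional parts gives
\begin{equation*}
\bigl|(j-i)\alpha - (\lfloor j\alpha\rfloor - \lfloor i\alpha\rfloor)\bigr| < \tfrac{1}{N}.
\end{equation*}
Setting $q := j-i$ and $p := \lfloor j\alpha\rfloor - \lfloor i\alpha\rfloor$ then yields integers with $1 \le q \le N$ and $|q\alpha - p| < \tfrac{1}{N}$, as claimed.

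There is no real obstacle here; the only small point to be careful about is to include the endpoint $0$ among the pigeons so that one genuinely has $N+1$ values distributed among $N$ boxes (otherwise the strict inequality $<\tfrac{1}{N}$ could fail). With that convention the argument is immediate, and the proof is at most a few lines in the final write-up.
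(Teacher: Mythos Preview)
Your pigeonhole argument is correct and is exactly the classical proof of Dirichlet's approximation theorem. The paper does not actually prove this lemma at all; it merely states it as a well-known result and immediately moves on to the proof of the extinction lemma, so there is nothing to compare your approach against.
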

\begin{proof}[Proof of Lemma \ref{lem-ep-extic}]
	For $M\ge1$, we define the kernel function
	\begin{equation*}
		K_M(t,x) := \sum_{\xi \in \Z^3} e^{ -[it|\xi|^2 + x\cdot \xi] }\eta(\xi/M) = e^{it\Delta}P_{\le M} \delta_0.
	\end{equation*}
	From lemma 3.18 of \cite{Bour1993}, $K_M$ enjoys the bound
	\begin{equation*}
		|K_M(t,x)| \lesssim \prod_{j=1}^3 \bigg( \frac{M}{\sqrt{q_j}} (1 + M\left|\frac{t}{2\pi} - \frac{a_j}{q_j}\right|^{\frac{1}{2}}) \bigg),
	\end{equation*}
	if $a_j,q_j$ satisfying $\frac{t}{2\pi} = \frac{a_j}{q_j} + \beta_j$, where $q_j \in \{1,2,\cdots,M\}, a_j\in \Z, (a_j,q_j) = 1$ and $|\beta_j| \le (Mq_j)^{-1}$ for all $j=1,2,3$.

	Let $|t|\le S^{-1}$, then Lemma \ref{lem-ep-Diri} gives $|\beta_j| \le (Mq_j)^{-1}\le M^{-1} \le S^{-1}$ for any $j=1,2,3$. So, we can find $\abs{\frac{a_j}{q_j}} \ge \frac{2}{S} \Rightarrow q_j \ge \frac{Sa_j}{2}$, that is either $a_j = 0$ or $q_j > \frac{S}{2}$ hold. 
	
	If $a_j \ne 0$, we get
	\begin{equation*}
		\frac{M}{\sqrt{q_j}} \left(1 + M\left|\frac{t}{2\pi} - \frac{a_j}{q_j}\right|^{\frac{1}{2}}\right) \lesssim \frac{M}{\sqrt{q_j}} \lesssim S^{-\frac{1}{2}} M.
	\end{equation*}
	
	If $a_j = 0$, we also have
	\begin{equation*}
		\frac{M}{\sqrt{q_j}} \left(1 + M\left|\frac{t}{2\pi} - \frac{q_j}{a_j}\right|^{\frac{1}{2}}\right) \lesssim \frac{M}{\sqrt{q_j}} + M|t|^{\frac{1}{2}} \lesssim |t|^{-\frac{1}{2}} \le S^{-\frac{1}{2}} M.
	\end{equation*}
	We conclude that for any $1 \le S \le M$
	\begin{equation}\label{fml-ep-K_M-Linf}
		\|K_M\|_{L^{\infty}_{t,x}((SM^{-2},S^{-1}))} \lesssim S^{-\frac{3}{2}}M^3.
	\end{equation}
	
	From definition \eqref{fml-eprofile-def}, we find that
	\begin{equation}\label{fml-ep-extin-int-bound}
	\begin{gathered}
		\|T_N \phi\|_{L^1(\T^3)} \lesssim _{\phi} N^{-\frac{5}{2}},\\
		\|P_{K} T_N \phi \|_{L^2(\T^3)} \lesssim _{\phi} N^{-\frac{1}{2}} \left( 1 + \frac{K}{N} \right)^{-5} \|\phi\|_{H^5_x(\R^3)} .
	\end{gathered}
	\end{equation}
	Combining those with Proposition \ref{prop-pre-Stri}, we obtain a refined Strichartz estimate
	\begin{equation}\label{fml-ep-Re-Str}
		\|e^{it\Delta}P_K T_N\phi\|_{L^p_{t,x}([-1,1]\times \T^3)} \lesssim K^{\frac{3}{2} - \frac{5}{p}} N^{-\frac{1}{2}} \left( 1 + \frac{K}{N} \right)^{-5} \|\phi\|_{H^5_x(\R^3)}.
	\end{equation}
	
	Then, we are devote to estimate $\|e^{it\Delta} T_N \phi\|_{Z(TN^{-2},T^{-1})}$ via $\eqref{fml-ep-Re-Str}$. By definition, we decompose $\|e^{it\Delta} T_N \phi\|_{Z(TN^{-2},T^{-1})}$ to three parts:
	\begin{align*}
	&	\|e^{it\Delta}T_N \phi\|_{Z([TN^{-2},T^{-1}])}^p =  \sup_{J\subset[TN^{-2},T^{-1}]}
		\sum_K K^{\frac{3}{2}p - 5} \|P_K e^{it\Delta}T_N \phi\|^p_{L^p(J\times\T^3)}  \\
		&=  \sup_{J\subset[TN^{-2},T^{-1}]} \bigg(\sum_{K\leq NT^{-\frac{1}{100}}} +
		\sum_{K\geq NT^{\frac{1}{100}}}+
		\sum_{NT^{-\frac{1}{100}}\leq K\leq NT^{\frac{1}{100}}} \bigg)
		K^{\frac{3}{2}p - 5} \|P_K e^{it\Delta}f_N\|^p_{L^p([TN^{-2}, T^{-1}]\times\T^3)}\\
		&= J_1 + J_2 + J_3.
	\end{align*}
	
First, apply $\eqref{fml-ep-Re-Str}$, we estimate term $J_1$ and $J_2$ as follows:
	\begin{align*}
		J_1 \lesssim & \sum_{K\leq NT^{-\frac{1}{100}}} K^{3p-10} \left(1+\frac{K}{N}\right)^{-5p}
		N^{-\frac{p}{2}} \|\phi\|_{H^{5}}\\
		\lesssim & N^{\frac{5p}{2} - 10} T^{\frac{3p - 10}{100}} \|\phi\|_{H^{5}},
	\end{align*}
	and
	\begin{align*}
		J_2 \lesssim & \sum_{K\leq NT^{-\frac{1}{100}}} K^{3p-10} \left(1+\frac{K}{N}\right)^{-5p}
		N^{-\frac{p}{2}}\|\phi\|_{H^{5}}\\
		\lesssim & \sum_{K\leq NT^{-\frac{1}{100}}} \bigg(\frac{K}{N}\bigg)^{-2p - 10} N^{3p - 10} \|\phi\|_{H^{5}}\\
		\lesssim & T^{-\frac{p + 5}{50}} \|\phi\|_{H^{5}}.
	\end{align*}
	
	Now, it remains to estimates $J_3$. We set $M \sim max(K,N)$ and $S \sim T$. Therefore, $K \in [NT^{-\frac{1}{100}} , NT^{\frac{1}{100}}]$ implies $M \le KT^{\frac{1}{100}} \le N T^{\frac{1}{50}}$. Then, we apply $\eqref{fml-ep-K_M-Linf}$ and Young's inequality to obtain
	\begin{equation}\label{fml-ep-inf}
		\begin{aligned}
			\|e^{it\Delta} P_K T_N \phi\|_{L^{\infty}_{t,x}([TN^{-2},T^{-1}] \times \T^3)} = & \|K_M * (T_N \phi)\|_{L^{\infty}_{t,x}([TN^{-2},T^{-1}] \times \T^3)} \\
			\lesssim & \|K_M\|_{L^{\infty}_{t,x}([TN^{-2},T^{-1}] \times \T^3)} \|T_N \phi\|_{L^1_{x}(\T^3)}\\
			\lesssim & T^{-\frac{3}{2}} M^3 N^{-\frac{5}{2}} \|\phi\|_{H^5} \le T^{-\frac{3}{2}+\frac{3}{50}} N^{\frac{1}{2}} \|\phi\|_{H^5}.
		\end{aligned}
	\end{equation} 
	We use $\eqref{fml-ep-Re-Str}$ to get
	\begin{equation}\label{fml-ep-10/3}
		\begin{aligned}
			\|e^{it\Delta} P_K T_N \phi\|_{L^{\frac{10}{3}+}_{t,x}([TN^{-2},T^{-1}] \times \T^3)} \lesssim & K^{\varepsilon} \bigg( 1 + \frac{K}{N} \bigg)^{-5} N^{-\frac{1}{2}} \\
			\lesssim & N^{-\frac{1}{2} + \varepsilon} T^{-\frac{\varepsilon}{100}} \|\phi\|_{H^5}.
		\end{aligned}
	\end{equation}
	Interpolating $\eqref{fml-ep-10/3}$ with $\eqref{fml-ep-inf}$, we have that
	\begin{equation*}
		\|e^{it\Delta} P_K T_N \phi\|_{L^{p}_{t,x}([TN^{-2},T^{-1}] \times \T^3)} \lesssim N^{\frac{1}{2} - \frac{5}{3p}} T^{-\frac{12(3p - 10)}{25p}} \|\phi\|_{H^5},
	\end{equation*} 
	which gives
	\begin{align*}
		J_3 \lesssim & \sum_{NT^{\frac{1}{100}} \le K\le NT^{\frac{1}{100}}} K^{\frac{3p}{2} - 5} \|e^{it\Delta} P_K T_N \phi\|_{L^{p}_{t,x}([TN^{-2},T^{-1}] \times \T^3)} \|\phi\|_{H^5} \\
		\lesssim & \sum_{NT^{\frac{1}{100}} \le K\le NT^{\frac{1}{100}}} K^{\frac{3p}{2} - 5} N^{\frac{p}{2} - \frac{10}{3}} \|\phi\|_{H^5} \\
		\lesssim & \sum_{NT^{\frac{1}{100}} \le K\le NT^{\frac{1}{100}}} \bigg( \frac{K}{N} \bigg)^{\frac{3p}{2} - 5} N^{2p - \frac{25}{3}} T^{-\frac{12(3p - 10)}{25}} \|\phi\|_{H^5} \\
		\lesssim & T^{-\frac{95(3p - 10)}{200}} \|\phi\|_{H^5} .
	\end{align*}
	Summing over $J_1,J_2$ and $J_3$, we have obtained the desired  estimate.
\end{proof}

We then introduce some notations and definitions of renormalized Euclidean frames. For fixed $f \in L^2(\T^3), t_0 \in \R$ and $x_0 \in \T^3$, we define
\begin{equation*}
	\begin{gathered}
		(\pi_{x_0} f)(x):= f(x-x_0),\\
		(\Pi_{t_0,x_0}) f(x):= (\pi_{x_0}e^{-it_0\Delta} f)(x).
	\end{gathered}
\end{equation*} 
\begin{definition}[Renormalized Euclidean frames]
	We define the set of renormalized Euclidean frames as follows
	\begin{align*}
		\widetilde{\mathcal{F}_e}:=&
		\{
		(N_k, t_k, x_k)_{k\geq 1}: N_k \ge 1, N_k \to +\infty \ t_k\to 0,\ x_k\in\T^3,\
		N_k\to\infty \\
		&\text{ and either } t_k=0 \text{ for any } k\geq 1 \text{ or }
		\lim_{k\to \infty} N_k^2|t_k| =\infty \}.
	\end{align*}
\end{definition}

\begin{proposition}[Euclidean profiles]\label{prop-ep-Ec-pro}
	Assume that $\mathcal{O} = (N_k, t_k, x_k)_k \in \widetilde{\mathcal{F}_e}$, $\phi \in \dot{H}^{\frac{1}{2}}(\R^3)$,
	$\phi\in \dot{H}^{\frac{1}{2}}(\R^3)$ and we set $U_k(0) := \Pi_{t_k,x_k}(T_{N_k}\phi)$. Then, we have the following properties 
	\begin{enumerate}
		\item For sufficient large $k = k(\mathcal{O},\phi)$, there exists $\tau =\tau(\phi)$ and
		$U_k\in X^{\frac{1}{2}}(-\tau, \tau)$ solve the initial value problem \eqref{cubic-intro} with initial data $U_k(0) = \Pi_{t_k, 0} (T_{N_k} \phi)$. Moreover, $U_k$ enjoys the bound
		\begin{equation}\label{fml-pro-Uk-bound}
			\|U_k\|_{X^{\frac{1}{2}}(-\tau, \tau)}\lesssim 1,
		\end{equation}
		uniformly in $k$.
		
		\item There exists $u\in C(\R: \dot{H}^{s}(\R^3))$ solve the equation
		\begin{equation}\label{fml-pro-NLS-R3}
			i\partial_t +\Delta_{\R^3}u = u|u|^2
		\end{equation}
		with scattering data $\phi^{\pm \infty}$ such
		that up to a subsequence: for any $\varepsilon >0$, there exists
		$T_0(\phi, \varepsilon)$ such that for all $T\geq T_0(\phi, \varepsilon)$, there exists
		$R_0(\phi, \varepsilon, T)$ such that for all $R\geq R_0(\phi, \varepsilon, T)$,
		there holds that
		\begin{equation}\label{fml-ep-in-wind}
			\|U_k -\widetilde{u_k}\|_{X^{\frac{1}{2}}(\{|t-t_k|\leq TN_k^{-2}\}\cap\{|t|<T^{-1}\})}
			\leq \varepsilon,
		\end{equation}
		for $k$ large enough, where
		\begin{equation}
			(\pi_{-x_k} \widetilde{u_k})(x, t) = N_k
			\eta(N_k \Psi^{-1}(x)/R) u(N_k\Psi^{-1}(x), N_k^2(t-t_k)).
		\end{equation}
		In addition, up to a subsequence,
		\begin{equation}\label{fml-ep-out-wind}
			\|U_k(t) -\Pi_{t_k-t,x_k}
			T_{N_k}\phi^{\pm\infty}\|_{X^{\frac{1}{2}}(\{ \pm(t-t_k) \geq TN_k^{-2}\}\cap\{|t|<T^{-1}\})}
			\leq \varepsilon,
		\end{equation}
		for $k = k(\phi,\varepsilon,T,R)$ large enough.
	\end{enumerate}
\end{proposition}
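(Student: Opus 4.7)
The plan is to decompose $(-T^{-1},T^{-1})$ into the Euclidean window $I_k^0=[t_k-T_0N_k^{-2},t_k+T_0N_k^{-2}]$ and its two complementary pieces $I_k^\pm$ lying on either side inside $(-T^{-1},T^{-1})$, then construct $U_k$ by gluing the nonlinear piece on $I_k^0$ obtained from Lemma \ref{lem-window-profile} with two almost-linear pieces on $I_k^\pm$ obtained via the stability theorem, Proposition \ref{prop-stability}. By translation invariance of \eqref{cubic-intro} I may set $x_k=0$, and by time reflection I may assume $t_k\ge0$. Let $u\in C(\R,\dot H^{1/2}(\R^3))$ be the Kenig--Merle scattering solution from Theorem \ref{thm-ep-GWP}, chosen so that $u(0)=\phi$ when $t_k=0$, and with backward scattering state $\phi^{-\infty}=\phi$ (constructed by the backward wave operator and then globalized via Theorem \ref{thm-ep-GWP}) when $N_k^2t_k\to\infty$; in either case denote the scattering states of $u$ by $\phi^{\pm\infty}$. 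Fix $\varepsilon>0$, pick $T_0=T_0(\phi,\varepsilon)$ large, and choose $\phi'\in H^5(\R^3)$ that is $\varepsilon_1$-close to $u(0)$ in $\dot H^{1/2}$.

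Time-translating Lemma \ref{lem-window-profile} so that its reference time is $t_k$ and applying it with data $T_{N_k}u(0)$ produces $\tilde U_k\in X^{1/2}(I_k^0)$ satisfying the uniform bound \eqref{fml-pro-Uk-bound} and the approximation \eqref{fml-ep-in-wind} by $\widetilde{u_k}$. On the outer intervals define the linear reference profiles
\begin{equation*}
L_k^{\pm}(t):=e^{i(t-t_k)\Delta_{\T^3}}T_{N_k}\phi^{\pm\infty}.
\end{equation*}
Lemma \ref{lem-ep-extic}, applied in the shifted variable $s=\pm(t-t_k)$, gives $\|L_k^{\pm}\|_{Z(I_k^{\pm})}\to 0$ as $k\to\infty$ and $T_0,T\to\infty$, and then the nonlinear estimate Proposition \ref{prop-nonlinear} yields $\||L_k^{\pm}|^2L_k^{\pm}\|_{N(I_k^{\pm})}\le\varepsilon$, so each $L_k^{\pm}$ is an admissible approximate solution of \eqref{cubic-intro} on its interval.

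To match $\tilde U_k$ to $L_k^\pm$ at the endpoints $t_k\pm T_0N_k^{-2}$, use the forward/backward scattering of $u$: $\|u(\pm T_0)-e^{\pm iT_0\Delta_{\R^3}}\phi^{\pm\infty}\|_{\dot H^{1/2}(\R^3)}\le\varepsilon$ for $T_0$ large, and then transfer this to $\T^3$ via the rescaling \eqref{fml-eprofile-def} together with a short-time comparison of the $\R^3$ and $\T^3$ free propagators for times $\sim N_k^{-2}$, as in Step (2) of Lemma \ref{lem-window-profile}. This yields $\|\tilde U_k(t_k\pm T_0N_k^{-2})-L_k^{\pm}(t_k\pm T_0N_k^{-2})\|_{H^{1/2}(\T^3)}\le C\varepsilon$. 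Proposition \ref{prop-stability}, invoked with approximate solution $L_k^\pm$ and error $e=-|L_k^\pm|^2L_k^\pm$, then extends $\tilde U_k$ to $I_k^\pm$ with $\|U_k-L_k^{\pm}\|_{X^{1/2}(I_k^{\pm})}\lesssim\varepsilon$, which is \eqref{fml-ep-out-wind}. Consistency of the initial data at $t=0$ is automatic when $t_k=0$; in the case $N_k^2t_k\to\infty$ the identity $L_k^-(0)=e^{-it_k\Delta_{\T^3}}T_{N_k}\phi=\Pi_{t_k,0}(T_{N_k}\phi)=U_k(0)$ (using $\phi^{-\infty}=\phi$) shows that the constructed solution has the prescribed initial data up to $O(\varepsilon)$, and one final application of Proposition \ref{prop-stability} produces the exact solution $U_k$ satisfying \eqref{fml-pro-Uk-bound}.

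The main obstacle is the endpoint-matching step: one must propagate the $\dot H^{1/2}(\R^3)$ scattering closeness of $u(\pm T_0)$ to $e^{\pm iT_0\Delta_{\R^3}}\phi^{\pm\infty}$ through the rescaling and cutoff onto $\T^3$ and through a free evolution of duration $T_0N_k^{-2}$, all while keeping the error $O(\varepsilon)$ in $H^{1/2}(\T^3)$. This essentially repeats the density-and-rescaling argument used in Step (2) of Lemma \ref{lem-window-profile}, exploiting that for scale-$N_k^{-1}$ concentrated data and times $|t|\lesssim N_k^{-2}$, the $\T^3$ and $\R^3$ Schr\"odinger propagators agree up to errors that tend to zero as $R,N_k\to\infty$.
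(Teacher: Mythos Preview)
Your proposal is correct and follows essentially the same approach as the paper: both arguments use Lemma \ref{lem-window-profile} on the Euclidean window centered at $t_k$, the extinction Lemma \ref{lem-ep-extic} to control the linear flow of $T_{N_k}\phi^{\pm\infty}$ on the complementary intervals, and an endpoint matching in $H^{1/2}(\T^3)$ obtained from the $\dot H^{1/2}(\R^3)$-scattering of $u$ combined with the short-time $\R^3$/$\T^3$ propagator comparison; the case $N_k^2|t_k|\to\infty$ is handled in both via the wave operator so that $\phi$ becomes a scattering state. The only cosmetic difference is that you invoke Proposition \ref{prop-stability} with approximate solution $L_k^{\pm}$ to simultaneously extend $U_k$ and obtain \eqref{fml-ep-out-wind}, whereas the paper first proves smallness of $\|e^{i(t-T_0N_k^{-2})\Delta}U_k(T_0N_k^{-2})\|_{Z}$ on the outer interval and then applies Proposition \ref{prop-lwp-LWP}; the underlying estimates (your ``endpoint matching'' is exactly the paper's $S_2$/$\widetilde S_3$ term) are identical.
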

\begin{proof}
	We begin with a {special case that $t_k = 0$}. Given $0 < \varepsilon_0 \ll \varepsilon$, we can find $\phi^{\prime} \in \dot{H}^{s_0}(\R^3)$ such that
	\begin{equation*}
		\| \phi - \phi^{\prime} \|_{\dot{H}^{s_0}(\R^3)} < \varepsilon_0 \ll \varepsilon,
	\end{equation*} 
	where $s_0 = max\{s,5\}$.
	By the global well-posedness result in Theorem \ref{thm-ep-GWP}, there exists $u \in C(\R,H^{s_0}(\R^3))$ solves \eqref{cubic-R3} with initial data $\phi^{\prime}$ and scattering data $\phi^{\pm} \in \dot{H}^{\frac{1}{2}}(\R^3)$. Let $T > 0$ be fixed. We take $\varepsilon_0 = \varepsilon_0(\phi , \varepsilon)$ satisfies Lemma \ref{lem-ep-decomp}, then for $R > R_0 = R_0(\phi,\varepsilon,T)$ and $k > k_0 = k_0(\phi,\varepsilon,T,R)$ there exists unique
	\begin{equation*}
		U_k \in C\big((-2TN_k^{-2} , 2TN_k^{-2}), \dot{H}^{\frac{1}{2}}(\T^3)\big) \cap X^{\frac{1}{2}}(-2TN_k^{-2} , 2TN_k^{-2})
	\end{equation*}  
	solves \eqref{cubic-intro}. Moreover, $U_k$ enjoys the bound
	\begin{equation}\label{fml-ep-Ukuk-small}
		\| U_k - \widetilde{u}_k \|_{X^{\frac{1}{2}}(-2TN_k^{-2} , 2TN_k^{-2})} \lesssim \varepsilon_0 .
	\end{equation}
	We have accomplished \eqref{fml-ep-in-wind}. 
	
	Now, we turn to the estimating beyond the Euclidean window, that is \eqref{fml-ep-out-wind}. Thanks to the extinction lemma, for $T \ge T_0 = T_0(\phi,\varepsilon_0)$ and $k \ge k_0 = k_0(\phi , \varepsilon_0)$,
	\begin{equation}\label{fml-ep-extinc}
		\| e^{it\Delta} \phi^{+\infty} \|_{Z(-TN_k^{-2} , T^{-1})} < \varepsilon_0.
	\end{equation}
	We claim that
	\begin{equation}\label{fml-ep-claim}
		\| e^{i(t - T_0N_k^{-2})\Delta} U_k(T_0 N^{-2}_k) \|_{Z(T_0N_k^{-2},T_0^{-1})} \lesssim \varepsilon_0.
	\end{equation}
	Assuming claim \eqref{fml-ep-claim}, then using Proposition \ref{prop-lwp-LWP} insures there exist unique solution $U_k$ to \eqref{cubic-intro} on the time interval $(-T_0^{-1},T_0^{-1})$. 
	
	We prove claim \eqref{fml-ep-claim}. Let $T \ge T_0$, $R \ge R_0$ and we denote $I_k := (TN_k^{-2} , T^{-1})$ and $J_k := (0 , T^{-1} - TN_k^{-2} )$. We deduce claim \eqref{fml-ep-claim} to the following
	\begin{align*}
		\| e^{it\Delta} U_k(TN_k^{-2}) \|_{Z(J_k)} \le & \| e^{it\Delta} \big( U_k(TN_k^{-2}) - \widetilde{u}_k(TN_k^{-2}) \big) \|_{Z(J_k)}\\
		& + \| e^{it\Delta} \big( \widetilde{u}_k(TN_k^{-2}) - e^{iTN_k^{-2}\Delta}(T_{N_k}\phi^{+\infty}) \big) \|_{Z(J_k)}\\
		& + \|e^{it\Delta}\|_{Z(I_k)} \\
		:= & S_1 + S_2 + S_3.
	\end{align*} 
	Combining \eqref{fml-ep-Ukuk-small} with Proposition \ref{prop-lwp-LWP}, $S_1$ can be bounded as follows:
	\begin{equation*}
		S_1 \lesssim \| U_k(TN_k^{-2}) - \widetilde{u}_k(TN_k^{-2}) \|_{H^{\frac{1}{2}}(\T^3)} \lesssim \varepsilon_0.
	\end{equation*}
	Similarly, we get
	\begin{equation}\label{fml-ep-S3-deduce}
		S_2 \lesssim \| \widetilde{u}_k(TN_k^{-2}) - e^{iTN_k^{-2}\Delta} (T_{N_k}\phi^{+\infty}) \|_{H^{\frac{1}{2}}(\T^3)}.
	\end{equation}
	Denoting $\phi^{\prime\prime}\in H^{5}(\R^3)$ such that $\|\phi^{\prime\prime} - \phi^{+\infty}\|_{H^{\frac{1}{2}}(\R^3)} < \varepsilon_0$, then we use triangle inequality to \eqref{fml-ep-S3-deduce} and arrive at
	\begin{align*}
		S_2 \lesssim & \| \widetilde{u}_k(TN_k^{-2}) - V_{R,N_k}(e^{it\Delta}\phi^{+\infty})(TN_k^{-2}) \|_{H^{\frac{1}{2}}(\T^3)} \\
		+ & \| V_{R,N_k}(e^{it\Delta}\phi^{+\infty})(TN_k^{-2}) - V_{R,N_k}(e^{it\Delta}\phi^{\prime\prime})(TN_k^{-2}) \|_{H^{\frac{1}{2}}(\T^3)} \\
		+ & \| V_{R,N_k}(e^{it\Delta}\phi^{\prime\prime})(TN_k^{-2}) - e^{iTN_k^{-2}\Delta} (T_{N_k}\phi^{+\infty}) \|_{H^{\frac{1}{2}}(\T^3)}\\
		& := S_2^1 + S_2^2 + S_2^3.
	\end{align*}
	By the scattering property, there exist $T_0 = T_0(\phi,\varepsilon_0)$ large enough such that $S_2^1 \lesssim \varepsilon_0$ hold for any $T>T_0$. Thanks to $\|\phi^{\prime\prime} - \phi^{+\infty}\|_{H^{\frac{1}{2}}(\T^3)} < \varepsilon_0$, so we have $S_2^2 \lesssim \varepsilon_0$. Using Lemma \ref{lem-window-profile} we can get $S_2^3 \lesssim \varepsilon_0$. As an conclusion, we get $S_2 \lesssim \varepsilon_0$.
	
	The smallness of term $S_3$ is an application of \eqref{fml-ep-extinc}. So far, we have proved claim \eqref{fml-ep-claim}.
	
	It remains to verify \eqref{fml-ep-out-wind}, thanks to symmetry we only need to show
	\begin{equation*}
		\|U_k(t) -\Pi_{t_k-t,x_k}
		T_{N_k}\phi^{+\infty}\|_{X^{\frac{1}{2}}(\{ t-t_k \geq TN_k^{-2}\}\cap\{|t|<T^{-1}\})}
		\leq \varepsilon.
	\end{equation*}
	By triangle inequality, for $T \ge T_0$, $R\ge  R_0$ and $k\ge k_0$, we obtain
	\begin{align*}
		\|U_k(t) - e^{it\Delta}(T_{N_k}\phi^{+\infty})\|_{X^{\frac{1}{2}}(I_k)} \le & \|U_k(t) - e^{i(t - TN_k^{-2})\Delta}U_k(TN_k^{-2})\|_{X^{\frac{1}{2}}(I_k)} \\
		& + \|e^{i(t - TN_k^{-2})\Delta}(U_k(TN_k^{-2}) - \widetilde{u}_k(TN_k^{-2}))\|_{X^{\frac{1}{2}}(I_k)} \\
		& + \|e^{it\Delta}( e^{-itN_k^{-2}}\widetilde{u}_k(TN_k^{-2}) - T_{N_k}\phi^{+\infty} )\|_{X^{\frac{1}{2}}(I_k)}\\
		:= & \widetilde{S}_1 + \widetilde{S}_2 + \widetilde{S}_3.
	\end{align*}
	We use Proposition \ref{prop-lwp-LWP} to $\widetilde{S}_1$ and obtain
	\begin{equation*}
		\widetilde{S}_1 \lesssim \varepsilon_0^2 \lesssim \varepsilon.
	\end{equation*}
	Via the embedding relation \eqref{fml-pre-emb} and \eqref{fml-ep-Ukuk-small}, the second term can be bounded as follows 
	\begin{equation*}
		\widetilde{S}_2 \lesssim \|U_k(TN_k^{-2}) - \widetilde{u}_k(TN_k^{-2})\|_{H^{\frac{1}{2}}(\T^3)} \lesssim \varepsilon.
	\end{equation*}
	Similarly, we have
	\begin{equation*}
		\widetilde{S}_3 \lesssim \|\widetilde{u}_k(TN_k^{-2}) - e^{itN_k^{-2}\Delta}(T_{N_k}\phi^{+\infty})\|_{H^{\frac{1}{2}}(\T^3)}.
	\end{equation*}
	The same strategy as we have deal with \eqref{fml-ep-S3-deduce}, we get $\widetilde{S}_3 \lesssim \varepsilon$. Combining those together, we obtain \eqref{fml-ep-out-wind} for $t_k = 0$ and $k \ge 1$.
	
	Then we consider {the case that} \textbf{$\lim_{k\to \infty} N_k^2t_k = \infty$}. Without loss of generality, we assume that $\lim_{k\to \infty} N_k^2t_k = +\infty$. Thanks to the existence of wave operator and Proposition \ref{prop-lwp-LWP}, there exists $u \in C(\R,H^{\frac{1}{2}}(\R^3))$ such that
	\begin{equation*}
		\lim_{t \to -\infty} \| u(t) - e^{it\Delta}\phi \|_{H^{\frac{1}{2}}(\R^3)} = 0.
	\end{equation*}
	We set $u(0) := \widetilde{\phi}$, frame $\widetilde{\mathcal{O}}$ and associated profile $\{ T_{N_k}\widetilde{\phi} \}_k$. Therefore, we can apply the first case and get a unique $V_k$, which is a solution to \eqref{cubic-intro} on the time interval $(-T_0,T_0)$ with initial data $V_k(0) = T_{N_k}\widetilde{\phi}$. For sufficiently large $k$, \eqref{fml-ep-out-wind} implies
	\begin{equation*}
		\lim_{k \to \infty} \|V_k(-t_k) - \Pi_{t_k,0}T_{N_k}\phi\|_{H^{\frac{1}{2}}(\T^3)} \lesssim \lim_{k \to \infty} \|V_k(t) - \Pi_{-t,0}T_{N_k}\phi\|_{X^{\frac{1}{2}}(\{-t\ge T_0N_k^{-2}\}\cap \{|t|\le T_0^{-1}\})} = 0.
	\end{equation*} 
	We apply stability result in Proposition \ref{prop-stability} to it and get that
	\begin{equation*}
		\lim_{k \to \infty} \| V_k(\cdot - t_k) - U_k \|_{X^{\frac{1}{2}}(-T_0,T_0)} = 0.
	\end{equation*}
	Hence, $U_k$ inherits \eqref{fml-ep-in-wind} and \eqref{fml-ep-out-wind} from $V_k$ and we have proved this Proposition.
\end{proof}

From the proof of Proposition \ref{prop-ep-Ec-pro}, we can find that for a given $U_k$ it has different approximating in time intervals. Based on this fact, we introduce the following Corollary, which decompose the nonlinear Euclidean profile from Proposition \ref{prop-ep-Ec-pro}.

\begin{corollary}[Decomposition of the nonlinear Euclidean profiles $U_k$]\label{cor-ep-Dec-Npro}
	Let $U_k$ be the nonlinear Euclidean profiles with respect to $\mathcal{O} = (N_k , t_k, x_k)_k \in \widetilde{\mathcal{F}}_e$ in Proposition \ref{prop-ep-Ec-pro}. For any $\theta > 0$, there exists large $T_{\theta}^0$ and $R_{\theta}$, such that for all $T_{\theta} \geq T_{\theta}^0$ and $k = k(R_{\theta})$ large enough, $U_k$ can be decompose as follows:
	\begin{align*}
		\mathbf{1}_{(-T_{\theta}^{-1} , T_{\theta}^{-1})} (t) U_k = \omega_k^{\theta , -\infty} + \omega_k^{\theta , +\infty} + \omega_k^{\theta} + \rho_k^{\theta} ,
	\end{align*}
	and $\omega_k^{\theta , \pm \infty}$, $\omega_k^{\theta}$ and $\rho_k^{\theta}$ has the following properties:
	\begin{align}\label{fml-ep-ndecomp-proper}
		\begin{aligned}
			& \norm{\omega_k^{\theta, \pm \infty}}_{Z'  (-T_{\theta}^{-1} , T_{\theta}^{-1})} + \norm{\rho_k^{\theta}}_{X^{\frac{1}{2}} (-T_{\theta}^{-1} , T_{\theta}^{-1})} \leq \theta,\\
			& \norm{\omega_k^{\theta, \pm \infty}}_{X^{\frac{1}{2}} (-T_{\theta}^{-1} , T_{\theta}^{-1})} + \norm{\omega_k^{\theta}}_{X^{\frac{1}{2}} (-T_{\theta}^{-1} , T_{\theta}^{-1})} \lesssim 1,\\
			& \omega_k^{\theta , \pm \infty} = P_{\leq R_{\theta} N_k } \omega_k^{\theta , \pm \infty} ,\\
			& \abs{\nabla_x^m \omega_k^{\theta}} + (N_k)^{-2} \mathbf{1}_{S_k^{\theta}} \abs{\partial_t \nabla_x^m \omega_k^{\theta} } \leq R_{\theta} (N_k)^{\abs{m} +\frac{1}{2}} \mathbf{1}_{S_k^{\theta}}, \quad m \in \mathbb{N}^2, \, 0 \leq \abs{m} \leq 10,
		\end{aligned}
	\end{align}
	where $S_k^{\theta} : = \{ (t,x) \in  (-T_{\theta}^{-1} , T_{\theta}^{-1}) \times \T^3 : \abs{t - t_k} \leq T_{\theta} N_k^{-2} , \abs{x-x_k} \leq R_{\theta} N_k^{-1} \}$.
\end{corollary}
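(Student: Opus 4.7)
The plan is to build the four-piece decomposition directly from Proposition~\ref{prop-ep-Ec-pro}, exploiting the natural split of $(-T_\theta^{-1},T_\theta^{-1})$ into the Euclidean window $W := \{|t-t_k|\le T_\theta N_k^{-2}\}$ and the two exterior pieces $E^\pm := \{\pm(t-t_k)\ge T_\theta N_k^{-2}\}\cap(-T_\theta^{-1},T_\theta^{-1})$, on each of which $U_k$ is well approximated by an explicit object. First I would fix an auxiliary parameter $\varepsilon = \varepsilon(\theta) \ll \theta$, then choose $T_\theta$ at least as large as the threshold $T_0(\phi,\varepsilon)$ of Proposition~\ref{prop-ep-Ec-pro} and the threshold of the Extinction Lemma~\ref{lem-ep-extic} applied to $\phi^{\pm\infty}$, and finally pick $R_\theta$ at least as large as $R_0(\phi,\varepsilon,T_\theta)$ and large enough that $\|(1-P_{\le R_\theta N_k})T_{N_k}\phi^{\pm\infty}\|_{H^{1/2}(\T^3)}<\varepsilon$ for all large $k$.

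With these choices, on $(-T_\theta^{-1},T_\theta^{-1})\times\T^3$ I would set
\begin{align*}
\omega_k^{\theta,\pm\infty}(t) &:= \mathbf{1}_{E^\pm}(t)\,P_{\le R_\theta N_k}\bigl(\Pi_{t_k-t,x_k} T_{N_k}\phi^{\pm\infty}\bigr),\\
\omega_k^\theta(t) &:= \mathbf{1}_{W}(t)\,\widetilde{u}_k(t),\\
\rho_k^\theta &:= \mathbf{1}_{(-T_\theta^{-1},T_\theta^{-1})}(t)\,U_k - \omega_k^{\theta,-\infty} - \omega_k^{\theta,+\infty} - \omega_k^\theta,
\end{align*}
where $\widetilde{u}_k$ is the rescaled profile from Proposition~\ref{prop-ep-Ec-pro}. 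The frequency localization $\omega_k^{\theta,\pm\infty} = P_{\le R_\theta N_k}\omega_k^{\theta,\pm\infty}$ is then built in and the decomposition identity is automatic. Uniform $X^{1/2}$-bounds on $\omega_k^{\theta,\pm\infty}$ follow from Proposition~\ref{prop-pre-Xs-Hs} combined with $\|T_{N_k}\phi^{\pm\infty}\|_{H^{1/2}(\T^3)}\lesssim \|\phi^{\pm\infty}\|_{\dot H^{1/2}(\R^3)}$, using that multiplication by a sharp time cutoff is bounded on $X^{1/2}$ thanks to the atomic structure of $U^2_{\Delta}$; the $X^{1/2}$-bound on $\omega_k^\theta$ is deduced from \eqref{fml-pro-Uk-bound}, \eqref{fml-ep-in-wind}, and the triangle inequality.

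Next I would verify the smallness conditions. The $X^{1/2}$-smallness of $\rho_k^\theta$ is a region-by-region estimate: on $W$, $\rho_k^\theta = U_k - \widetilde{u}_k$ is controlled by \eqref{fml-ep-in-wind}, while on $E^\pm$ the triangle inequality together with \eqref{fml-ep-out-wind} and the frequency-tail bound yields the same control. For the $Z'$-smallness of $\omega_k^{\theta,\pm\infty}$, the essential input is the Extinction Lemma~\ref{lem-ep-extic}, which provides $\|e^{it\Delta}T_{N_k}\phi^{\pm\infty}\|_{Z(T_\theta N_k^{-2},T_\theta^{-1})}<\varepsilon$; combined with the uniform $X^{1/2}$-bound and the definition $\|\cdot\|_{Z'}=\|\cdot\|_Z^{3/4}\|\cdot\|_{X^{1/2}}^{1/4}$, this yields $\|\omega_k^{\theta,\pm\infty}\|_{Z'}\lesssim\varepsilon^{3/4}<\theta$ once $\varepsilon$ is taken small. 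The pointwise derivative bounds on $\omega_k^\theta$ and its concentration in $S_k^\theta$ come by direct differentiation of the explicit formula $(\pi_{-x_k}\widetilde{u}_k)(x,t)=N_k\,\eta(N_k\Psi^{-1}(x)/R_\theta)\,u(N_k\Psi^{-1}(x),N_k^2(t-t_k))$, each spatial derivative producing a factor of $N_k$ and each time derivative a factor of $N_k^2$, while the regularity bound $\sup_t\|u(t)\|_{H^{10}(\R^3)}\lesssim 1$ follows from Theorem~\ref{thm-ep-GWP} after replacing $\phi$ by a smooth approximant $\phi'\in H^{10}(\R^3)$ with $\|\phi-\phi'\|_{\dot H^{1/2}}<\varepsilon$, the resulting discrepancy being absorbed into $\rho_k^\theta$.

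The main technical obstacle is orchestrating the order of choices so that every estimate stands simultaneously: first $\varepsilon$ small relative to $\theta$, then $T_\theta$ large enough for both Proposition~\ref{prop-ep-Ec-pro} and the Extinction Lemma, then $R_\theta$ large enough for both the profile approximation of Proposition~\ref{prop-ep-Ec-pro} and the high-frequency tail bound, and finally $k$ large so that \eqref{fml-ep-in-wind}, \eqref{fml-ep-out-wind}, Lemma~\ref{lem-ep-extic}, and the tail bound all apply. Once this chain of smallness/largeness is respected, the four-piece decomposition satisfies every item of \eqref{fml-ep-ndecomp-proper}.
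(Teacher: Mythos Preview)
Your proposal is correct and takes essentially the same approach as the paper: split the interval into the Euclidean window and two exterior pieces, let $\omega_k^\theta$ be the rescaled Euclidean profile $\widetilde u_k$ restricted to the window, let $\omega_k^{\theta,\pm\infty}$ be frequency-truncated linear flows of the scattering data on the exterior pieces, and define $\rho_k^\theta$ as the remainder, using Proposition~\ref{prop-ep-Ec-pro} for the $X^{1/2}$-smallness of $\rho_k^\theta$ and the Extinction Lemma for the $Z'$-smallness of $\omega_k^{\theta,\pm\infty}$. The only cosmetic difference is that the paper applies the frequency cutoff at the Euclidean level ($\phi^{\theta,\pm\infty}=P_{\le R_\theta}\phi^{\pm\infty}$, then $T_{N_k}$) whereas you apply $P_{\le R_\theta N_k}$ after transferring to $\T^3$; since the operator $T_{N_k}$ intertwines these projections up to negligible errors, the two are equivalent.
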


\begin{proof}
	By Proposition \ref{prop-ep-Ec-pro}, there exists $T(\phi, \theta)$, such that for all $T \geq T(\phi, \theta)$, there exists $R(\phi, \theta , T)$ such that for all $R \geq R(\phi, \theta ,T)$, for $k$ large enough the following approximating hold
	\begin{align*}
		\norm{U_k -\widetilde{u}_k}_{X^{\frac{1}{2}}(\{ \abs{t- t_k} \leq TN_k^{-2} \}  \cap \{\abs{t} \leq T^{-1} \})} \leq \frac{\theta}{2},
	\end{align*} 
	where
	\begin{align*}
		(\pi_{-x_k} \widetilde{u}_k) (t,x) = N_k \eta(N_k \Psi^{-1}(x) R^{-1}) u(N_k^2(t-t_k) , N_k \Psi^{-1} (x)),
	\end{align*}
	where $u$ is a solution of \eqref{cubic-R3} with scattering data $\phi^{\pm}$.
	
	For $U_k$ outside of the Euclidean window, up to subsequence, we have
	\begin{align*}
		\norm{U_k - \Pi_{t_k-t, x_k} T_{N_k} \phi^{\pm}}_{X^{\frac{1}{2}} (\{ \pm (t-t_k) \geq TN_k^{-2}\} \cap \{ \abs{t} \leq T^{-1} \})} \leq \frac{\theta}{4},
	\end{align*}
	for $k = k(\phi,\theta,T,R)$ large enough.
	
	By the extinction Lemma \ref{lem-ep-extic}, we also can take sufficiently large $T_{\theta} > T(\phi, \theta)$, such that
	\begin{align*}
		\norm{e^{it\Delta} \Pi_{t_k, x_k} T_{N_k}\phi^{\pm \infty}  }_{Z(T_{\theta}N_k^{-2} , T_{\theta}^{-1})} \leq \frac{\theta}{4} ,
	\end{align*}
	for $k$ large enough.
	
	Therefore, we can choose $R_{\theta} = R(\phi, \theta , T_{\theta})$. Then, we construct $\omega^{\theta , \pm}_k,\omega_k^{\theta}$ and $\rho_k^{\theta}$ as follows:
	\begin{enumerate}
		\item
		\begin{equation*}
			\omega_k^{\theta, \pm \infty} : = \mathbf{1}_{ \{\pm(t-t_k) \geq T_{\theta}N_k^{-2} , \abs{t} \leq T_{\theta}^{-1} \}} (\Pi_{t_k -t, x_k} T_{N_k} \phi^{\theta, \pm \infty}),
		\end{equation*}
		where
		\begin{equation*}
			\phi^{\theta , \pm} = P_{\le R_{\theta}} \phi^{\theta , \pm}.
		\end{equation*}
		
		\item
		\begin{equation*}
			\omega_k^{\theta}: = \widetilde{u}_k \cdot \mathbf{1}_{S_k^{\theta}}.
		\end{equation*}

		\item
		\begin{equation*}
			\rho_k : = \mathbf{1}_{(-T_{\theta}^{-1} , T_{\theta}^{-1})} (t)\big( U_k^{\alpha} - \omega_k^{\theta} -\omega^{\theta , +\infty} - \omega^{\theta, -\infty} \big).
		\end{equation*}
	\end{enumerate}
	
	We then verify that $\omega^{\theta , \pm}_k,\omega_k^{\theta}$ and $\rho_k^{\theta}$ satisfy $\eqref{fml-ep-ndecomp-proper}$.
	\begin{enumerate}
		\item By the definition, we have
		\begin{align*}
			\norm{\phi^{\theta, \pm \infty}}_{\dot{H}^{\frac{1}{2}} (\R^3)} \lesssim 1 , \phi^{\theta, \pm \infty} = P_{\leq R_{\theta}} \phi^{\theta , \pm \infty},
		\end{align*}
		which implies $\omega_k^{\theta , \pm \infty} = P_{\leq R_{\theta} N_{\theta}} \omega_k^{\theta, \pm \infty}$.
		
		\item By the stability Proposition \ref{prop-stability} and Lemma \ref{lem-window-profile} we can make mirror change to $\omega_k^{\theta}$ and $\omega_k^{\theta , \pm \infty}$ such that 
		\begin{align*}
			\abs{\nabla_x^m \omega_k^{\theta}} + N_k^{-2} \mathbf{1}_{S_k^{\theta}} \abs{\partial_t \nabla_x^m \omega_k^{\theta}} \leq R_{\theta} N_k^{\abs{m}+1} \mathbf{1}_{S_k^{\theta}} , \quad 0 \leq \abs{m} \leq 5 .
		\end{align*}
		
		\item By Proposition \ref{prop-ep-Ec-pro}, we obtain that
		\begin{align*}
			\norm{\rho_k^{\theta} }_{X^{\frac{1}{2}} (\{ \abs{t} < T_{\theta}^{-1}\})} \leq \frac{\theta}{2},
		\end{align*}
		and then we have
		\begin{align*}
			& \norm{\omega_k^{\theta, \pm \infty}}_{Z' ([-T_{\theta}^{-1} , T_{\theta}^{-1} ])} + \norm{\rho_k^{\theta}}_{X^{\frac{1}{2}} ([-T_{\theta}^{-1} , T_{\theta}^{-1} ])} \leq \theta,\\
			& \norm{\omega_k^{\theta, \pm \infty}}_{X^{\frac{1}{2}}([-T_{\theta}^{-1} , T_{\theta}^{-1}])} + \norm{\omega_k^{\theta}}_{X^{\frac{1}{2}} ([-T_{\theta}^{-1} , T_{\theta}^{-1}])} \lesssim 1.
		\end{align*}
	\end{enumerate}

	So far, we have finished the proof of this Corollary.
\end{proof}

\subsection{Profile decomposition}
In this section, we construct the Euclidean profiles of bounded sequence of functions on $\T^3$ and express the sequence(up to a subsequence) as the almost orthogonal sum of these profiles.

In the previous, we have defined the renormalized Euclidean frames. Now, we define the Euclidean frames by dropping the assumption that either $t_k = 0$ or $\lim\limits_{t \to \pm\infty} N_k^2 t_k = +\infty $.

\begin{definition}[Euclidean frames]
	\begin{enumerate}
		\item We define a Euclidean frame to be a sequence 
		\begin{equation*}
			\mathcal{F}_e := \{ (N_k, t_k, x_k)_{k\le1} : N_k\geq 1, N_k\to +\infty, t_k\to 0, x_k\in\T^3 \}. 
		\end{equation*}
		We say that two frames, $(N_k, t_k, x_k)_k$ and $(M_k, s_k, y_k)_k$ are orthogonal if
		\begin{align*}
			\lim_{k\to+\infty} \parenthese{ \ln \abs{\frac{N_k}{M_k}}+ N_k^2 \abs{t_k-s_k} + N_k\abs{x_k-y_k} } =\infty.
		\end{align*}
		Otherwise, we say that frames $(N_k, t_k, x_k)_k$ and $(M_k, s_k, y_k)_k$ are equivalent.

		\item If $\mathcal{O} =(N_k, t_k, x_k)_k$ is a Euclidean frame and if  $\phi\in\dot{H}^{\frac{1}{2}}(\R^3)$, we define the Euclidean profile associated to  $(\phi, \mathcal{O})$ as the sequence $\widetilde{\phi}_{\mathcal{O}_k}$:
		\begin{align*}
			\widetilde{\phi}_{\mathcal{O}_k} := \Pi_{t_k, x_k} (T_{N_k}\phi).
		\end{align*}
	\end{enumerate}
\end{definition}

In the following Proposition, we collect basic properties associated to orthogonal and equivalent Euclidean frames.
\begin{proposition}[Properties on Euclidean frames]\label{prop-ep-Eucpro-proper}
	\begin{enumerate}
		\item
		If $\mathcal{O}$ and $\mathcal{O}'$ are equivalent Euclidean frames, then there
		exists an isometry $T: \dot{H}^{\frac{1}{2}}(\R^3) \to \dot{H}^{\frac{1}{2}}(\R^3)$ such that for any profile $\widetilde{\phi}_{\mathcal{O}'_k}$, up to a subsequence there is
		\begin{align*}
			\limsup_{k\to\infty} \norm{\widetilde{T \phi}_{\mathcal{O}_k} - \widetilde{\phi}_{\mathcal{O}'_k}}_{H_x^{\frac{1}{2}}(\T^3)} = 0.
		\end{align*}
		
		\item
		If $\mathcal{O}$ and $\mathcal{O}'$ are orthogonal Euclidean frames and $\widetilde{\phi}_{\mathcal{O}_k}$, $\widetilde{\phi}_{\mathcal{O}'_k}$ are corresponding profiles, then, up to a subsequence:
		\begin{align}\label{fml-ep-H1/2-Otho}
			\lim_{k\to\infty} \inner{ \widetilde{\phi}_{\mathcal{O}_k},  \widetilde{\phi}_{\mathcal{O}'_k}}_{H^{\frac{1}{2}}\times H^{\frac{1}{2}}(\T^3)} = 0;\\
			\lim_{k\to\infty} \|\widetilde{\phi}_{\mathcal{O}_k}  \widetilde{\phi}_{\mathcal{O}'_k}\|_{L^{\frac{3}{2}}(\T^3)} = 0.
		\end{align}
		
	\end{enumerate}
\end{proposition}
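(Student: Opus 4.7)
The plan is to reduce both parts to the Euclidean setting via the scaling operators $T_{N_k}$, where the claims collapse to standard symmetry and dispersion properties for $\dot H^{1/2}(\R^3)$.

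For part (1), the equivalence hypothesis says that $\ln(N_k/M_k)$, $M_k^2(t_k-s_k)$ and $M_k(x_k-y_k)$ are bounded sequences, so after extracting a subsequence they converge to some $\mu\in(0,\infty)$, $t^*\in\R$, $x^*\in\R^3$. The natural symmetry group of $\dot H^{1/2}(\R^3)$, generated by isotropic dilation $\phi(x)\mapsto \mu^{1/2}\phi(\mu x)$, spatial translation, and Schr\"odinger evolution $e^{it^*\Delta_{\R^3}}$, acts by isometries. I would take $T$ to be the composition of these three, with the arrangement dictated by matching the $\Pi_{t_k,x_k}$ conventions so that $\pi_{x_k}e^{-it_k\Delta_{\T^3}}T_{N_k}(T\phi)$ is asymptotically equal to $\pi_{y_k}e^{-is_k\Delta_{\T^3}}T_{M_k}\phi$. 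After reducing to Schwartz $\phi$ by density, I would unfold the definitions of $T_N$ and $Q_N$, change variables to zoom in at scale $N_k^{-1}$ near $x_k$, and check three convergences: the spatial cutoff $\eta(N_k^{-1/2}\cdot)\to 1$ on bounded sets; the periodization $\Psi^{-1}$ is harmless because the profile is concentrated in a torus set of diameter $\ll 1$; and the torus evolution $e^{-it_k\Delta_{\T^3}}$ at frequency $\sim N_k$ converges to the rescaled Euclidean evolution $e^{-iN_k^2 t_k\Delta_{\R^3}}$. The three limit parameters $\mu,t^*,x^*$ are absorbed exactly by $T$, yielding the claimed vanishing in $H^{1/2}(\T^3)$.

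For part (2), after extraction the orthogonality forces us into at least one of three (non-exclusive) regimes: (a) $N_k/M_k+M_k/N_k\to\infty$; (b) $N_k\sim M_k$ and $N_k^2|t_k-s_k|\to\infty$; or (c) $N_k\sim M_k$, $N_k^2|t_k-s_k|$ bounded, and $N_k|x_k-y_k|\to\infty$. My strategy is to approximate $\phi$ and $\phi'$ by Schwartz functions, rescale both profiles to unit scale, and work on $\R^3$ (periodization again harmless at scale $N_k^{-1}$). In case (a) the rescaled profiles have essentially disjoint frequency supports, so Plancherel disposes of the $H^{1/2}$ pairing and a H\"older estimate using the embedding $\dot H^{1/2}(\R^3)\hookrightarrow L^3(\R^3)$ handles the $L^{3/2}$ product. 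In case (c) the profiles are essentially supported in disjoint balls around $x_k$ and $y_k$ after Schwartz truncation, so both bounds are immediate.

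The main obstacle will be case (b), the time-separated regime. For the $H^{1/2}$ inner product, after rescaling the pairing takes the form $\langle\phi,e^{i\tau_k\Delta_{\R^3}}\phi'\rangle$ with $\tau_k=N_k^2(t_k-s_k)\to\infty$, and I would use the fact that the Schr\"odinger group converges weakly to zero on Schwartz data, a consequence of stationary phase applied to the Fourier representation. For the $L^{3/2}$ product I would combine the dispersive estimate $\|e^{i\tau\Delta_{\R^3}}\phi'\|_{L^\infty}\lesssim |\tau|^{-3/2}\|\phi'\|_{L^1}$ on a fixed ball containing essentially all of $\phi$ with H\"older to push the product to $o(1)$. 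Throughout both parts, the bookkeeping required to show that the errors introduced by the cutoff $\eta(N^{-1/2}\cdot)$ and the periodization $\Psi^{-1}$ vanish in the limit is routine but must be carried out carefully, and density of Schwartz functions is used to transfer estimates from smooth data back to arbitrary $\phi,\phi'\in\dot H^{1/2}(\R^3)$.
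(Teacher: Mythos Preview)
The paper does not actually prove this proposition; it writes ``The proof is rather standard, so we omit the details, see Lemma 5.4 in \cite{IPT3} for the proof.'' Your proposal reconstructs precisely the standard argument from Ionescu--Pausader: reduction to Schwartz data, unwinding the $T_N$ operator to pass from $\T^3$ to $\R^3$, identification of $T$ as a composition of dilation, translation and free evolution in part (1), and the three-regime case split (scale separation, time separation, space separation) in part (2), each handled by the corresponding Euclidean mechanism (Plancherel with near-disjoint Fourier supports, dispersive decay or weak convergence of $e^{i\tau\Delta_{\R^3}}$, and physical-space disjointness).

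One small point worth tightening: in case (c) for the $H^{1/2}$ pairing you invoke ``essentially disjoint supports,'' but the $H^{1/2}$ inner product is nonlocal, so it is cleaner to work on the Fourier side where, after rescaling and Schwartz approximation, the pairing becomes $\int |\xi|\,\widehat\phi(\xi)\,\overline{\widehat{\phi'}(\xi)}\,e^{i\xi\cdot z_k}\,d\xi$ with $|z_k|\to\infty$, and Riemann--Lebesgue applies. Otherwise your sketch is correct and matches the approach the paper defers to.
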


The proof is rather standard, so we omit the details, see Lemma 5.4 in \cite{IPT3} for the proof.

\begin{definition}[Absence from a frame]
	We say that a sequence of functions $\{ f_k\}_k \subset H^{\frac{1}{2}} (\T^3)$ is absent from a frame $ \mathcal{O}$ if, up to a subsequence, for every profile $\psi_{\mathcal{O}_k}$ associated to $\mathcal{O}$, 
	\begin{gather*}
		\inner{ f_k , \widetilde{\psi}_{\mathcal{O}_k} }_{H^{\frac{1}{2}}(\T^3)\times H^{\frac{1}{2}}(\T^3)} \to 0,\\
		\langle f_k , \widetilde{\psi}_{\mathcal{O}_k} \rangle_{L^2(\T^3)\times L^2(\T^3)} \to 0.
	\end{gather*}
	as $k \to \infty$.
\end{definition}

\subsection{Linear profile decomposition }
The next proposition shows the profile decomposition for a bounded  sequence $\{f_n\}\subset H^\frac{1}{2}(\T^3)$.

To establish the linear profile decomposition, we need the refined Strichartz estimates in Besov space and the inverse Strichartz estimates following the argument in Keraani\cite{Keraani2001}.
\begin{lemma}\label{lem-ep-Ref-Stri}
	Let $f\in H^{\frac{1}{2}}(\T^3)$, $I\subset [0,1]$ and $a \in (0,1)$, then 
	\begin{align*}
		\norm{e^{it\Delta} f}_{Z(I)} \lesssim \norm{f}_{H_x^\frac{1}{2}(\T^3)}^{a} \sup_{N\in 2^{\Z}} \big( N^{-1} \norm{P_N e^{it\Delta}f}_{L_{t,x}^{\infty}(I\times\T^3)} \big)^{1-a}.
	\end{align*}
\end{lemma}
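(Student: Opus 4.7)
The plan is to interpolate the $L_{t,x}^p$-norm of $P_N e^{it\Delta}f$ between a Strichartz exponent $L_{t,x}^{q_0}$ (where we apply Proposition \ref{prop-pre-Stri}) and $L_{t,x}^\infty$ (where we pay with the hypothesized sup), then sum the resulting dyadic estimate and take $p$-th roots.

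\textbf{Step 1: Pointwise interpolation in dyadic $N$.} Fix $J\subset I$ with $|J|\le 1$ and a dyadic $N$. Choose $q_0$ satisfying $\max(2,\tfrac{10}{3})<q_0<p$ (this is where the choice of $p$ in the definition of the $Z$-norm is committed: $p$ must be taken large enough that the exponent $a=q_0/p$ in the statement can range over $(0,1)$ by varying $q_0$). By the trivial log-convex interpolation
\begin{equation*}
\|P_N e^{it\Delta}f\|_{L_{t,x}^p(J\times\T^3)}^p\le \|P_N e^{it\Delta}f\|_{L_{t,x}^{q_0}(J\times\T^3)}^{q_0}\,\|P_N e^{it\Delta}f\|_{L_{t,x}^{\infty}(J\times\T^3)}^{p-q_0}.
\end{equation*}
Apply Proposition \ref{prop-pre-Stri} to the $L^{q_0}$-factor (permissible because $|J|\le 1$ and $q_0>\tfrac{10}{3}$) to obtain $\|P_N e^{it\Delta}f\|_{L_{t,x}^{q_0}}\lesssim N^{3/2-5/q_0}\|P_N f\|_{L^2}$.

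\textbf{Step 2: Multiply by the $Z$-weight $N^{5-p}$.} Combining the previous line with $N^{5-p}$ and doing the bookkeeping
\begin{equation*}
N^{5-p}\,N^{(3/2-5/q_0)q_0}=N^{3q_0/2-p}=\bigl(N^{1/2}\bigr)^{q_0}\bigl(N^{-1}\bigr)^{p-q_0},
\end{equation*}
one arrives at
\begin{equation*}
N^{5-p}\|P_N e^{it\Delta}f\|_{L_{t,x}^p(J\times\T^3)}^p\lesssim \bigl(N^{1/2}\|P_N f\|_{L^2}\bigr)^{q_0}\bigl(N^{-1}\|P_N e^{it\Delta}f\|_{L_{t,x}^\infty(I\times\T^3)}\bigr)^{p-q_0}.
\end{equation*}

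\textbf{Step 3: Sum in $N$.} Pull the $L^\infty$ factor out as a supremum (it is independent of the summation) and bound the $\ell^{q_0}$ sum of $N^{1/2}\|P_N f\|_{L^2}$ by its $\ell^2$ counterpart via the embedding $\ell^2\hookrightarrow\ell^{q_0}$ valid for $q_0\ge 2$:
\begin{equation*}
\sum_{N\in 2^{\Z}}\bigl(N^{1/2}\|P_N f\|_{L^2}\bigr)^{q_0}\le \Bigl(\sum_{N\in 2^{\Z}} N\|P_N f\|_{L^2}^2\Bigr)^{q_0/2}\lesssim \|f\|_{H^{1/2}(\T^3)}^{q_0}.
\end{equation*}
Taking the supremum over $J\subset I$ with $|J|\le 1$, extracting the $p$-th root, and setting $a:=q_0/p$ yields exactly the claimed inequality.

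\textbf{Main obstacle.} The proof itself is essentially bookkeeping; the only subtle point is the admissible range of the interpolation parameter. For arbitrary $a\in(0,1)$ one needs $q_0=ap$ to satisfy simultaneously $q_0>\tfrac{10}{3}$ (so that Proposition \ref{prop-pre-Stri} applies) and $q_0\ge 2$ (for the $\ell^2\hookrightarrow\ell^{q_0}$ embedding), which forces $p$ in the definition of $Z$ to be chosen sufficiently large. This is consistent with the remark after Definition \ref{def-pre-Z} that $p$ is fixed precisely in this lemma; once $p$ is chosen large enough (say $p>\tfrac{10}{3a_{\min}}$ for the range of $a$ needed later), the proof goes through with no further difficulty.
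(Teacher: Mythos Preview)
Your proof is correct and follows essentially the same approach as the paper's own argument: interpolate $L_{t,x}^p$ between $L_{t,x}^{ap}$ (Strichartz) and $L_{t,x}^\infty$, absorb the weight $N^{5-p}$, and pass from $\ell^{ap}$ to $\ell^2$ via the embedding $\ell^2\hookrightarrow\ell^{ap}$. Your discussion of the constraint $ap>\tfrac{10}{3}$ (and hence the tie between the admissible range of $a$ and the choice of $p$ in the $Z$-norm) is precisely the point the paper alludes to in Definition~\ref{def-pre-Z}.
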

\begin{proof}
	By the definition of $Z$-norm, we only need consider the case
	\begin{align*}
		\norm{e^{it\Delta} f}_{Z(I)} = \parenthese{\sum_N N^{5 - p} \norm{P_N e^{it\Delta} f}_{L^p_{t,x}}^p }^{\frac{1}{p}} = \norm{N^{\frac{1}{p}} \norm{P_N e^{it\Delta}f}_{L^p_{t,x}} }_{l^p_N}.
	\end{align*}
	By H\"{o}lder inequality, for any $0 < a < 1$ and $\frac{10}{3} < ap < \infty$, we get
	\begin{align}\label{fml-ep-prof-lem-Str-1}
		\norm{N^{\frac{5}{p} - 1} \norm{P_N e^{it\Delta}f}_{L_{t,x}^p} }_{l_N^p} & \lesssim  \norm{  \parenthese{N^{\frac{5}{ap} - 1} \norm{P_Ne^{it\Delta}f}_{L^{ap}_{t,x}}}^{a} \parenthese{N^{-1}   \norm{P_N e^{it\Delta} f}_{L_{t,x}^{\infty}}}^{1-a}}_{l_N^p} \nonumber\\
		&  \lesssim  \sup_{N} \parenthese{ N^{-1} \norm{P_N e^{it\Delta} f }_{L_{t,x}^{\infty}} }^{1-a}  \parenthese{ \sum_N N^{5 - ap} \norm{P_N e^{it\Delta} f }_{L_{t,x}^{ap}}^{ap} }^{\frac{1}{p}}.
	\end{align}
	Using Proposition \ref{prop-pre-Stri} and Proposition \ref{prop-pre-Embed}, we obtain
	\begin{equation}\label{fml-ep-prof-lem-Str-2}
		N^{5 - ap} \norm{P_N e^{it\Delta} f }_{L_{t,x}^{ap}}^{ap} \lesssim \|P_N f\|_{H_x^{\frac{1}{2}}}^{ap}.
	\end{equation}
	Combining \eqref{fml-ep-prof-lem-Str-1} with \eqref{fml-ep-prof-lem-Str-2}, we have finished the proof of this lemma. 
\end{proof}
Now, we show the following inverse Strichartz estimates.
\begin{lemma}\label{lem-ep-decomp}
Let $\{f_k\}_{k\ge1}$ be an uniformly bounded sequence of functions in $H^{\frac{1}{2}}(\T^3)$ and up to subsequence $f \rightharpoonup g \in H^{\frac{1}{2}}(\T^3)$. Moreover, we assume that there exists a sequence of interval $0 \in I_k = (-T_k , T^k)$ such that $|I_k| \to 0$ as $k \to 0$. Then, for fixed $\delta > 0$, there exist $J \lesssim \delta^{-2}$ and profiles $\widetilde{\phi}^{\alpha}_{O_k^{\alpha}}$ associated to pairwise orthogonal Euclidean frame $\mathcal{O}^{\alpha},\alpha = 1,2,\cdots,J$ such that
\begin{equation*}
	f_k = g + \sum_{1\le \alpha \le J} \widetilde{\phi}^{\alpha}_{O_k^{\alpha}} + R_{k}^J.
\end{equation*}
The $R_k^{J}$ is the remainder in the sense that
\begin{equation}\label{fml-ep-lem-remainder}
	\limsup_{k \to \infty} \|e^{it\Delta} R_k\|_{Z(I_k)} \le \delta.
\end{equation}
Furthermore, we have the following orthogonal relations
\begin{gather*}
	\|f_k\|_{L^2(\T^3)}^2 = \|g\|_{L^2(\T^3)}^2 + \|R_k\|_{L^2(\T^3)}^2 + o_k(1),\\
	\|f_k\|_{\dot{H}^{\frac{1}{2}}(\T^3)}^2 = \|g\|_{\dot{H}^{\frac{1}{2}}(\T^3)}^2 + \sum_{1 \le \alpha \le J}\|\psi^{\alpha}\|_{\dot{H}^{\frac{1}{2}}(\R^3)}^2 + \|R_k\|_{\dot{H}^{\frac{1}{2}}(\T^3)}^2 + o_k(1).
s\end{gather*} 
\end{lemma}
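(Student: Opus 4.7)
The plan is to follow the Keraani-type iterative bubble-extraction scheme, using Lemma \ref{lem-ep-Ref-Stri} as the mechanism that converts a large $Z$-norm into a pointwise concentration, and using weak $\dot H^{1/2}$-limits to extract Euclidean profiles in the sense of \eqref{fml-eprofile-def}. By Lemma \ref{lem-ep-Ref-Stri} applied with $a\in(0,1)$ small, it suffices to produce a remainder $R_k^J$ with
\[
\sup_{N\in 2^{\Z}}N^{-1}\|P_N e^{it\Delta}R_k^J\|_{L^{\infty}_{t,x}(I_k\times\T^3)}\le \eta_0(\delta,M),
\]
where $M:=\sup_k\|f_k\|_{H^{1/2}}$ and $\eta_0\simeq (\delta/M^a)^{1/(1-a)}$; re-inserting this bound into the refined Strichartz yields \eqref{fml-ep-lem-remainder}.

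Starting from $r_k^0:=f_k-g$ (so $r_k^0\rightharpoonup 0$), I proceed inductively. If $r_k^J$ already satisfies the pointwise smallness above, stop and set $R_k^J:=r_k^J$. Otherwise pick $N_k^J\in 2^{\Z}$ and $(t_k^J,x_k^J)\in I_k\times\T^3$ realizing $|P_{N_k^J}e^{it_k^J\Delta}r_k^J(x_k^J)|\gtrsim \eta_0 N_k^J$. Along a subsequence, $t_k^J\to 0$ (because $|I_k|\to 0$), and $N_k^J\to\infty$: otherwise the convolution kernel $\pi_{x_k^J}e^{-it_k^J\Delta}K_{N_k^J}$ would be precompact in $L^2(\T^3)$ and pairing against $r_k^J\rightharpoonup 0$ would contradict the lower bound. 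Hence $\mathcal{O}_k^J:=(N_k^J,t_k^J,x_k^J)_k$ is a Euclidean frame. I then study the rescaled pullback
\[
\psi_k^J(x):=(N_k^J)^{-1/2}\bigl(e^{it_k^J\Delta}r_k^J\bigr)\bigl(\Psi(x/N_k^J)+x_k^J\bigr),
\]
defined with the cutoff of \eqref{fml-eprofile-def} on the ball of radius $(N_k^J)^{1/2}$. The family $\{\psi_k^J\}$ is uniformly bounded in $\dot H^{1/2}(\R^3)$, so along a further subsequence $\psi_k^J\rightharpoonup\phi^J$, and testing the pointwise lower bound against a fixed Schwartz bump at frequency $\sim 1$ gives $\|\phi^J\|_{\dot H^{1/2}(\R^3)}\gtrsim \eta_0$.

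Setting $r_k^{J+1}:=r_k^J-\widetilde\phi^J_{\mathcal{O}_k^J}$, the weak convergence $\psi_k^J\rightharpoonup\phi^J$ together with the definitions in \eqref{fml-eprofile-def} and Proposition \ref{prop-ep-Eucpro-proper}(2) yields the asymptotic Pythagorean identity
\[
\|r_k^J\|_{\dot H^{1/2}(\T^3)}^2=\|\phi^J\|_{\dot H^{1/2}(\R^3)}^2+\|r_k^{J+1}\|_{\dot H^{1/2}(\T^3)}^2+o_k(1).
\]
Summing bounds $\sum_\alpha \|\phi^\alpha\|_{\dot H^{1/2}}^2\le M^2$, so the iteration terminates after $J\le C(M)\eta_0^{-2}\lesssim \delta^{-2}$ steps (after choosing $a$ small enough). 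Orthogonality of the frames is enforced at each stage by contradiction: if $\mathcal{O}_k^{J+1}$ were equivalent to some earlier $\mathcal{O}_k^{\beta}$, Proposition \ref{prop-ep-Eucpro-proper}(1) would allow me to rewrite $\widetilde\phi^{J+1}_{\mathcal{O}_k^{J+1}}$ as an equivalent profile along $\mathcal{O}_k^{\beta}$, contradicting the fact that $r_k^{J+1}$ is \emph{absent} from $\mathcal{O}_k^{\beta}$ by construction. The $L^2$ identity comes for free: each $\widetilde\phi^\alpha_{\mathcal{O}_k^\alpha}$ has $L^2(\T^3)$-mass of order $(N_k^\alpha)^{-1/2}\|Q_{N_k^\alpha}\phi^\alpha\|_{L^2(\R^3)}\to 0$, so only $g$ and $R_k^J$ survive in the $L^2$-Pythagorean accounting.

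The main obstacle I anticipate is the profile-extraction step, specifically verifying that the weak limit $\phi^J$ lies in $\dot H^{1/2}(\R^3)$ with $\|\phi^J\|_{\dot H^{1/2}}\gtrsim \eta_0$ despite the scale $N_k^J$ diverging and despite $\Psi$ being only a local chart. One must carefully invert the rescaling and time-translation encoded in $\widetilde\phi_{\mathcal{O}_k}=\Pi_{t_k,x_k}T_{N_k}\phi$, and the cutoff $\eta(N^{-1/2}x)$ in \eqref{fml-eprofile-def} must be chosen compatibly with the scale $(N_k^J)^{1/2}$ so that torus inner products pass to genuine $\R^3$-pairings in the limit; this is where the compactness of $\T^3$ interacts most delicately with the Euclidean rescaling of the bubble.
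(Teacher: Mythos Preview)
Your proposal is correct and follows essentially the same route as the paper's proof: reduce \eqref{fml-ep-lem-remainder} via Lemma \ref{lem-ep-Ref-Stri} to a pointwise smallness of $N^{-1}\|P_Ne^{it\Delta}r_k^J\|_{L^\infty_{t,x}}$, extract a concentrating frame $(N_k,t_k,x_k)$ from a violation of this bound, rule out $N_k$ bounded by weak convergence to zero, pull back through $\Pi_{-t_k,-x_k}$ and rescale to obtain a weak $\dot H^{1/2}(\R^3)$-limit as the profile, verify the new remainder is absent from the extracted frame, and iterate until termination after $O(\delta^{-2})$ steps by the $\dot H^{1/2}$-Pythagorean decoupling. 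The technical point you flag---passing from torus pairings to genuine $\R^3$ inner products through the local chart $\Psi$ and the cutoff in \eqref{fml-eprofile-def}---is exactly where the paper spends its effort (its auxiliary sequence $\psi_k^R$ with the extra truncation parameter $R$ and \eqref{fml-ep-fk-psi-vanish}), and your identification of this as the delicate step is on target.
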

\begin{proof}
First,	we prove  \eqref{fml-ep-lem-remainder}. We define  $\Lambda\big( (f_k)_k \big) $ as follows
	\begin{equation*}
		\Lambda\big( (f_k)_k \big) := \limsup_{k \to +\infty} \sup\limits_{N\ge1, t \in I_k, x\in \T^3} N^{-1} \big| (e^{it\Delta} P_N f_k) (x) \big|.
	\end{equation*}
	By Lemma \ref{lem-ep-Ref-Stri}, it is sufficient to prove
	\begin{equation*}
		\Lambda\big(R_k\big) \le \delta.
	\end{equation*}
	Notice that $H^{\frac{1}{2}}(\T^3)$ is a separable Hilbert space, we have $\inner{ f_k - g , g } \to 0$ as $k \to 0$. Therefore, we get
	\begin{equation*}
		\|f_k\|_{H^{\frac{1}{2}}(\T^3)}^2 = \|f_k - g\|^2_{H^{\frac{1}{2}}(\T^3)} + \|g\|_{H^{\frac{1}{2}}(\T^3)}^2.
	\end{equation*}
	Similarly, we can also find
	\begin{equation*}
		\|f_k\|_{H^{\frac{1}{2}}(\T^3)}^2 = \|f_k - g\|^2_{H^{\frac{1}{2}}(\T^3)} + \|g\|_{H^{\frac{1}{2}}(\T^3)}^2.
	\end{equation*}
	Hence, without of generality, we assume that $g = 0$.
	
	If $\Lambda\big( (f_k)_k \big) < \delta$, then \eqref{fml-ep-lem-remainder} follows immediately. Otherwise, if $\Lambda\big( (f_k)_k \big) \ge \delta$, we claim that there exists an Euclidean frame $\mathcal{O}$ and an associated profile $\big( \widetilde{\psi}_{\mathcal{O}_k} \big)_k$ such that
	\begin{equation}\label{fml-ep-lem-prof-bound}
		\limsup_{k \to +\infty} \| \widetilde{\psi}_{\mathcal{O}_k} \|_{H^{\frac{1}{2}}(\T^3)} \lesssim \|\psi\|_{H^{\frac{1}{2}}(\R^3)},
	\end{equation}
	and 
	\begin{equation}\label{fml-ep-lem-prof-absent}
		\limsup_{k \to +\infty} \big| \langle f_k , \widetilde{\psi}_{\mathcal{O}_k} \rangle_{H^\frac{1}{2}(\T^3)} \big| \ge \frac{\delta}{2}.
	\end{equation}
	Moreover, if $\big(f_k\big)_k$ is absent from a family of frames $\big(\mathcal{O}^{\alpha}\big)_{\alpha}$, then $\mathcal{O}$ is orthogonal to all $\mathcal{O}_{\alpha}$.
	
	Now, we prove the claim. By the definition and the fact that $T_N$ is a bounded operator, we get \eqref{fml-ep-lem-prof-bound} immediately. Hence, we only need to prove \eqref{fml-ep-lem-prof-absent}. By the definition of $\Lambda$, up to a subsequence, there exists $(N_k,t_k,x_k)_k$ with $(N_k,t_k,x_k) \in [1,\infty)\times I_k \times \T^3$ such that
	\begin{align*}
		\frac{\delta}{2} & \le N_k^{-1}\big| (e^{it_k\Delta})P_{N_k}f(x_k) \big| = \bigg| \langle N_k^{-1}(e^{it_k\Delta})P_{N_k}f , \delta_{x_k} \rangle \bigg|\\
		& \le \bigg| \langle f , N_k^{-1}(e^{-it_k\Delta})P_{N_k}\delta_{x_k} \rangle_{H^{\frac{1}{2}}\times H^{-\frac{1}{2}}} \bigg|.
	\end{align*}
	We also claim that $N_k \to \infty$. Otherwise, we can find $1\le N_{\infty} < \infty$ and $t_{\infty} \in \T^3$ such that $N_k \to N_{\infty}$ and $t_k \to t_{\infty}$ up to a subsequence, respectively. Let
	\begin{equation*}
		\overline{\psi} := (1 - \Delta)^{-\frac{1}{2}}N_{\infty}^{-1}P_{N_{\infty}}\delta_{x_{\infty}}.
	\end{equation*}
	Notice that $N^{-1}e^{-it\Delta}P_{N}\delta_{x}$ is continuous with respect to $(N,t,x)$. Hence, we can find $\overline{\psi} \in H^{\frac{1}{2}}$ such that
	\begin{equation*}
		\langle f_k , \overline{\psi} \rangle \ge \frac{\delta}{4}
	\end{equation*}
	for all $k$ sufficiently large, which contradict with the fact that $f_k \rightharpoonup 0$.
	
	We define 
	\begin{equation*}
		\psi := \mathcal{F}^{-1}_{\R^3} \big( |\xi|^{-2} (\eta^3(\xi) - \eta^3(2\xi)) e^{ix_0 \cdot \xi } \big),
	\end{equation*}
	where $\eta$ is a bump function. Then, we are devoted to verify the profile $\widetilde{\psi}_{\mathcal{O}_k}$ associated with the Euclidean frame $\mathcal{O} = (N_k,t_k,x_k)_k$ satisfies assertion \eqref{fml-ep-lem-prof-absent}. Using the fact that $N_K \to \infty$ and Paserval's identity, we obtain
	\begin{equation*}
		\lim_{k \to \infty} \big\| (1 - \Delta) T_{N_k}\psi - N_k^{-1}P_{N_k}\delta_{x_0} \big\|_{L^{\frac{3}{2}}} = 0,  
	\end{equation*}
	and by the Sobolev embedding we also get
	\begin{equation*}
		\lim_{k \to \infty} \big\| (1 - \Delta) T_{N_k}\psi - N_k^{-1}P_{N_k}\delta_{x_0} \big\|_{H^{-\frac{1}{2}}} = 0.
	\end{equation*}
	We conclude that 
	\begin{equation*}
		\frac{\delta}{2} \lesssim \abs{\inner{ f_k , N_k^{-\frac{1}{2}} e^{it_k\Delta} P_{N_k} \delta_{x_k}}_{H^{\frac{1}{2}} \times H^{-\frac{1}{2}} }} \lesssim \abs{ \inner{f_k, \widetilde{\psi}_{\mathcal{O}_k}}_{H^{\frac{1}{2}} \times H^{\frac{1}{2}}}}.
	\end{equation*}
	Therefore, we have construct a Euclidean frame $\mathcal{O}$ satisfies \eqref{fml-ep-lem-prof-bound} and \eqref{fml-ep-lem-prof-absent}. Assuming that $(f_k)_k$ is absent from a family of frames $(\mathcal{O}^{\alpha})_{\alpha}$, then for every $\alpha$ and profile $\big( \widetilde{\psi}_{\mathcal{O}_{k}^{\alpha}} \big)_{k}$
	\begin{equation*}
		\langle f_k , \widetilde{\psi}_{\mathcal{O}_{k}^{\alpha}} \rangle_{H^{\frac{1}{2}}(\T^3)} \to 0, \quad \text{as} \quad k \to \infty.
	\end{equation*}
	Assuming that there exists $\mathcal{O}^{\beta} \in (\mathcal{O}^{\alpha})_{\alpha}$ such that $\mathcal{O}^{\beta}$ is equivalent to $\mathcal{O}$. On one hand, by Proposition \ref{prop-ep-Eucpro-proper}, there exists isometry $T$ such that
	\begin{equation*}
		\limsup_{k \to \infty} \| \widetilde{\psi}_{\mathcal{O}_{k}} - \widetilde{T \psi}_{\mathcal{O}_{k}^{\beta}} \|_{H^{\frac{1}{2}}(\T^3)} = 0.
	\end{equation*}
	On the other hand, from \eqref{fml-ep-lem-prof-absent}
	\begin{align}\label{fml-ep-lem-contr}
		\frac{\delta}{2} & \le \limsup_{k \to \infty} \big| \langle f_k , \widetilde{\psi}_{\mathcal{O}_{k}} \rangle \big| \nonumber \\
		& \le \limsup_{k \to \infty}\big( \|f_{k}\|_{H^{\frac{1}{2}}(\T^3)}\| \widetilde{\psi}_{\mathcal{O}_{k}} - \widetilde{T \psi}_{\mathcal{O}_{k}^{\beta}} \|_{H^{\frac{1}{2}}(\T^3)} + \big| \langle f_k , \widetilde{S \psi}_{\mathcal{O}_{k}^{\beta}} \rangle \big| \big).
	\end{align}
	Since $(f_k)_k$ is absent from $\mathcal{O}^{\beta}$, the right hand side of \eqref{fml-ep-lem-contr} tend to zero while $k \to \infty$, which is a contradiction. Therefore, $\mathcal{O}$ is orthogonal with $\big( \mathcal{O}^{\alpha} \big)_{\alpha}$.
	
	For $R \ge 1$ and $k \ge k_0$, we define $\psi^{R}_k : \R^3 \to \C$ as follows
	\begin{equation*}
		\psi^R_k(y) := N_k^{-1} \eta^3(R^{-1}y) ( \Pi_{-t_k,-x_k} f_k ) \big( \Psi(N_k^{-1}y) \big).
	\end{equation*} 
	We can easily check that
	\begin{equation*}
		\| \psi^R_k \|_{H^{\frac{1}{2}}(\R^3)} \lesssim \|f_k\|_{H^{\frac{1}{2}}(\T^3)} \le A < \infty,
	\end{equation*}
	uniformly on $R$. Hence, we have $\psi^R,\psi \in H^{\frac{1}{2}}(\R^3)$ such that $\psi^R_k \rightharpoonup \psi^R \rightharpoonup \psi$ up to a subsequence. By the uniqueness of weak limit, we can find
	\begin{equation*}
		\psi^R(x) = \eta^3(R^{-1}x) \psi(x).
	\end{equation*}
	For any $\gamma \in C^{\infty}_0(\R^3)$ which is supported in $B(0 , R/2)$, then for $k$ large enough, we have
	\begin{align}\label{fml-ep-fk-psi-vanish}
		\langle f_k - \widetilde{\psi}_{\mathcal{O}_k} , \widetilde{\gamma}_{\mathcal{O}_k} \rangle_{H^{\frac{1}{2}}\times H^{\frac{1}{2}}(\T^3)} & = \langle \Pi_{-t_k,-x_k}f_k , T_{N_k}\gamma \rangle_{H^{\frac{1}{2}}\times H^{\frac{1}{2}}(\T^3)} - \langle \psi , \gamma \rangle_{H^{\frac{1}{2}}\times H^{\frac{1}{2}}(\R^3)}\nonumber\\
		& = \langle \psi^R , \gamma \rangle_{H^{\frac{1}{2}}\times H^{\frac{1}{2}}(\R^3)} - \langle \psi , \gamma \rangle_{H^{\frac{1}{2}}\times H^{\frac{1}{2}}(\R^3)} + o_k(1)\nonumber\\
		& = o_k(1).
	\end{align}
	By density, this is also hold for $\gamma \in H^{\frac{1}{2}}(\R^3)$. Thanks to \eqref{fml-ep-lem-prof-bound}, we also obtain
	\begin{equation*}
		\|\psi\|_{H^{\frac{1}{2}}(\R^3)} \gtrsim \delta.
	\end{equation*}
	Combining \eqref{fml-ep-fk-psi-vanish} with Proposition \ref{prop-ep-Eucpro-proper}, we get
	\begin{equation*}
		\|f_k - \widetilde{\psi}_{\mathcal{O}_k}\|_{L^2(\T^3)}^2 = \|f_k\|^2_{L^2(\T^3)} + o_k(1),
	\end{equation*}
	and
	\begin{equation*}
		\|f_k - \widetilde{\psi}_{\mathcal{O}_k}\|_{\dot{H}^{\frac{1}{2}}(\T^3)}^2 = \|f_k\|^2_{\dot{H}^{\frac{1}{2}}(\T^3)} + \|\psi\|^2_{\dot{H}^{\frac{1}{2}}(\T^3)} + o_k(1).
	\end{equation*}
	
	We set $f_k^0 := f_k$ and $\big(f_k^0\big)_k := \big(f_k\big)_k$. From the previous procedures, for $\alpha \ge 0$ and $\Lambda\big( (f_k^{\alpha})_k \big)$, we can always construct Euclidean frame $\mathcal{O}_{\alpha}$ and associated profile $\widetilde{\psi}_{\mathcal{O}_k}^{\alpha}$. Then, we define
	\begin{equation*}
		f_k^{\alpha + 1} := f_k^{\alpha} - \widetilde{\psi}^{\alpha}_{\mathcal{O}_{\alpha}}, \quad k \ge 1.
	\end{equation*}
	We have proved that $\mathcal{O}^{\alpha}$ is orthogonal with $\mathcal{O}^{\beta}$ with $\beta < \alpha$. Moreover, $f_k^{\alpha + 1}$ is absent from $\mathcal{O}^{\alpha}$. By the equivalence, $f_k^{\alpha + 1}$ is absent from $\mathcal{O}_{\beta}$ with all $\beta \le \alpha$. So far, for every $\alpha + 1 \ge 1$, we can find family of frames $\big(\mathcal{O}_{\beta}\big)_{\beta \le \alpha}$ associated profiles $\big(\widetilde{\psi}_{\mathcal{O}_{\beta}}\big)_{\beta \le \alpha}$
	\begin{equation*}
		\|f_k\|_{L^2}^2 = \sum_{\beta \le \alpha}\|\widetilde{\psi}_{\mathcal{O}^{\beta}_k}\|_{L^2}^2 + \|f_k^{\alpha + 1}\|_{L^2} + o_k(1),
	\end{equation*}
	and
	\begin{equation*}
		\|f_k\|_{\dot{H}^{\frac{1}{2}}}^2 = \sum_{\beta \le \alpha}\|\widetilde{\psi}_{\mathcal{O}^{\beta}_k}\|_{\dot{H}^{\frac{1}{2}}}^2 + \|f_k^{\alpha + 1}\|_{\dot{H}^{\frac{1}{2}}} + o_k(1).
	\end{equation*}
	Denoting that $R_k := f_k - \sum_{\beta \le \alpha} \widetilde{\psi}_{\mathcal{O}^{\beta}_k}$, by the orthogonality we get
	\begin{equation*}
		\|R_k\|_{H^{\frac{1}{2}}}^2 = \bigg| \|f_k\|_{H^{\frac{1}{2}}}^2 - \sum_{\beta \le \alpha} \|\widetilde{\psi}_{\mathcal{O}^{\beta}_k}\|_{H^{\frac{1}{2}}}^2 \bigg| + o_k(1) \lesssim |A - \alpha \delta^2|. 
	\end{equation*}
	Taking $\alpha \sim \delta^{-2}$, we have $\Lambda(R_k) \le \delta$.

\end{proof}
Applying Lemma \ref{lem-ep-Ref-Stri} and Lemma \ref{lem-ep-decomp}, we have the following linear profile decomposition.
\begin{proposition}[Linear profile decomposition]\label{prop-ep-prop-decomp}
	Let $\{ f_k\}_{k}$ be a sequence of bounded functions in $H^{\frac{1}{2}}(\T^3)$ and satisfy
	\begin{align*}
		\limsup_{k\to +\infty} \norm{f_k}_{H^{\frac{1}{2}}(\T^3)} \leq A < +\infty,
	\end{align*}
	and $f_k \rightharpoonup g \in H^{\frac{1}{2}}(\T^3)$ up to a subsequence. We further let a sequence of intervals $I_k=(-T_k, T^k)$ around the origin such that $\abs{I_k} \to 0$ as $k\to\infty$. Then, there exists $J^*\in \mathbb{N}$, and a sequence of profile $ \widetilde{\psi}_{\mathcal{O}_k^{\alpha}}^{\alpha}$ associated to pairwise orthogonal Euclidean frames $(\mathcal{O}^{\alpha})_{\alpha}$ and $\psi^{\alpha} \in {H}^{\frac{1}{2}}(\R^3)$ such that extracting a subsequence, for every $0 \leq J\leq J^*$, we have
	\begin{align}\label{5.1}
		f_k = g + \sum_{1\leq \alpha \leq J} \widetilde{\psi}_{\mathcal{O}_k^{\alpha}}^{\alpha}  + R_k^J
	\end{align}
	where $R^J_k$ is the remainder in the sense that
	\begin{align}\label{5.2}
		\limsup_{J \to J^*}\limsup_{k\to\infty}\norm{e^{it\Delta}R_k^J}_{Z(I_k)} = 0.
	\end{align}
	Besides, we also have the following orthogonality relations:
	\begin{equation}\label{fml-ep-prof-Otho}
		\begin{aligned}
			& \norm{f_k}_{L^2}^2 = \norm{g}_{L^2}^2 + \norm{R_k^J}_{L^2}^2 + o_k (1),\\
			& \norm{f_k}_{\dot{H}^{\frac{1}{2}}}^2  = \norm{g}_{\dot{H}^{\frac{1}{2}}}^2 + \sum_{\alpha \leq J} \norm{\psi^{\alpha}}_{\dot{H}^{\frac{1}{2}}}^2 + \norm{R_k^J}_{\dot{H}^{\frac{1}{2}}}^2 + o_k(1) \\
			& \limsup_{J \to J^*}\limsup_{k\to \infty} \left| \|f_k\|_{L^3}^3 - \|g\|_{L^3}^3 - \|\widetilde{\psi}_k^{\alpha}\|_{L^3}^3 \right| = 0.
		\end{aligned}
	\end{equation}
	
\end{proposition}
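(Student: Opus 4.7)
I would deduce the proposition from Lemma \ref{lem-ep-decomp} by iteration and diagonal extraction, and then upgrade the $L^2$ and $\dot H^{1/2}$ orthogonalities to the $L^3$ one using frame-orthogonality from Proposition \ref{prop-ep-Eucpro-proper} together with the refined Strichartz estimate of Lemma \ref{lem-ep-Ref-Stri}.

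Fix $\delta_j\searrow 0$. Applying Lemma \ref{lem-ep-decomp} with parameter $\delta_1$ to $(f_k)_k$ produces Euclidean profiles $\widetilde{\psi}_{\mathcal{O}_k^\alpha}^\alpha$, $1\le\alpha\le J_1\lesssim\delta_1^{-2}$, associated to pairwise orthogonal frames, and a remainder $R_k^{J_1}$ with $\limsup_k\|e^{it\Delta}R_k^{J_1}\|_{Z(I_k)}\le\delta_1$. Feeding $R_k^{J_1}$ back into the lemma with $\delta_2$ yields further profiles; by the concluding part of the proof of Lemma \ref{lem-ep-decomp}, each newly extracted frame is orthogonal to all previously produced ones. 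A Cantor diagonalization across $j$ then furnishes a single subsequence along which \eqref{5.1}--\eqref{5.2} hold for every $J\le J^*$. The $\dot H^{1/2}$-orthogonality at each stage controls $\sum_\alpha\|\psi^\alpha\|_{\dot H^{1/2}(\R^3)}^2<\infty$; the $L^2$ and $\dot H^{1/2}$ identities in \eqref{fml-ep-prof-Otho} then follow by expanding squared norms and invoking Proposition \ref{prop-ep-Eucpro-proper}(2) to kill cross inner-products, together with the weak convergence $\widetilde{\psi}_{\mathcal{O}_k^\alpha}^\alpha\rightharpoonup 0$ (which annihilates the $g$--profile pairings) and the absence of $R_k^J$ from the first $J$ frames built into the construction.

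The main difficulty lies in the $L^3$-orthogonality. Expanding $\|f_k\|_{L^3(\T^3)}^3$ into monomials of degree three in $g$, the profiles and $R_k^J$, profile--profile cross-terms are controlled by the bilinear estimate $\|\widetilde{\psi}_{\mathcal{O}_k^\alpha}^\alpha\widetilde{\psi}_{\mathcal{O}_k^\beta}^\beta\|_{L^{3/2}(\T^3)}\to 0$ from Proposition \ref{prop-ep-Eucpro-proper}(2), combined with H\"older and the Sobolev embedding $H^{1/2}(\T^3)\hookrightarrow L^3(\T^3)$; cross-terms involving $g$ and a single profile vanish by weak convergence $\widetilde{\psi}_{\mathcal{O}_k^\alpha}^\alpha\rightharpoonup 0$ in $L^3(\T^3)$ together with Rellich compactness applied to $g$. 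The most delicate step is showing $\|R_k^J\|_{L^3(\T^3)}\to 0$ as $J\to J^*$ and $k\to\infty$: the plan is to interpolate the uniform $H^{1/2}$-bound on $R_k^J$ against Lemma \ref{lem-ep-Ref-Stri}, so that $\|e^{it\Delta}R_k^J\|_{Z(I_k)}\to 0$ converts into smallness of a critical $L^p_{t,x}$ Besov average on $I_k\times\T^3$; a pigeonhole in time inside $I_k$ then selects a slice $t_k^*\in I_k$ on which $\|e^{it_k^*\Delta}R_k^J\|_{L^3(\T^3)}$ is small, and since $|I_k|\to 0$ while $e^{it\Delta}$ is strongly continuous on $H^{1/2}(\T^3)$, this smallness transfers to $\|R_k^J(0)\|_{L^3}$. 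Passing from $Z$-norm smallness of the linear evolution to pointwise-in-time $L^3$-smallness of the initial data, and in particular absorbing the high-frequency tail, where the loss in Lemma \ref{lem-ep-Ref-Stri} must be balanced against the summability of the profile energies via a careful Littlewood--Paley decomposition, is the main obstacle.
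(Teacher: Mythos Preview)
Your iteration of Lemma \ref{lem-ep-decomp} with $\delta_j\searrow 0$ followed by a Cantor diagonal extraction is precisely the argument the paper has in mind (no proof is written out; the paper simply declares the proposition a consequence of Lemmas \ref{lem-ep-Ref-Stri} and \ref{lem-ep-decomp}), and your treatment of the $L^2$ and $\dot H^{1/2}$ orthogonalities via Proposition \ref{prop-ep-Eucpro-proper} and the absence of $R_k^J$ from the extracted frames is correct.

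The gap is in the $L^3$ identity, specifically the step where you pass from smallness of $\|e^{it\Delta}R_k^J\|_{Z(I_k)}$ to smallness of $\|R_k^J\|_{L^3(\T^3)}$. Your proposed route, pigeonholing in $t\in I_k$ to find a good slice $t_k^*$ and then invoking strong continuity of $e^{it\Delta}$ on $H^{1/2}$ to move back to $t=0$, does not close: strong continuity is \emph{not} uniform on bounded subsets of $H^{1/2}(\T^3)$. If $R_k^J$ lives at frequency $\sim N_k\to\infty$, then $\|e^{it_k^*\Delta}R_k^J-R_k^J\|_{L^3}$ can remain bounded away from zero even for $t_k^*=O(N_k^{-2})\to 0$. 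Moreover, the pigeonhole itself is problematic because $|I_k|\to 0$ forces the time-average bound to blow up by a factor $|I_k|^{-1/p}$.

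The correct route avoids the space--time detour entirely. The construction inside the proof of Lemma \ref{lem-ep-decomp} actually yields the stronger information
\[
\Lambda\big((R_k^J)_k\big)=\limsup_{k\to\infty}\,\sup_{N\ge1,\;t\in I_k,\;x\in\T^3}N^{-1}\big|(e^{it\Delta}P_NR_k^J)(x)\big|\;\le\;\delta_J,
\]
and since $0\in I_k$ this immediately gives $\limsup_k\sup_N N^{-1}\|P_NR_k^J\|_{L^\infty_x(\T^3)}\le\delta_J$. A purely spatial refined Sobolev inequality, the fixed-time analogue of Lemma \ref{lem-ep-Ref-Stri} and proved by the identical Littlewood--Paley interpolation,
\[
\|f\|_{L^3(\T^3)}\lesssim\|f\|_{H^{1/2}(\T^3)}^{\,a}\Big(\sup_{N\ge1}N^{-1}\|P_Nf\|_{L^\infty(\T^3)}\Big)^{1-a},\qquad 0<a<1,
\]
then gives $\limsup_{J}\limsup_{k}\|R_k^J\|_{L^3(\T^3)}=0$ directly. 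With this in hand, your treatment of the profile--profile cross terms via $\|\widetilde{\psi}^{\alpha}_{\mathcal{O}_k^{\alpha}}\widetilde{\psi}^{\beta}_{\mathcal{O}_k^{\beta}}\|_{L^{3/2}}\to 0$ from Proposition \ref{prop-ep-Eucpro-proper}(2) and of the $g$--profile cross terms by concentration of the profiles completes the $L^3$ decoupling.
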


As an application, we have the following orthogonality result
\begin{proposition}\label{prop-ep-nonlin-ortho}
	Let $U^{\alpha}_k,U^{\beta}_k$ and $U^{\gamma}_k$ denote the maximal lifespan $0 \in I_k$ solutions of \eqref{cubic-intro} with initial data $U_k^{\delta}(0) = \widetilde{\phi^{\delta}_{ \mathcal{O}^{\delta}_k }}$ for $\delta = \alpha,\beta,\gamma$. The Euclidean frames $\mathcal{O}^{\delta}_k,\delta = \alpha,\beta,\gamma$ are orthogonal with each other and $|I_k| \to 0$ as $k \to \infty$. We also denote $G$ as the maximal lifespan $I_0$ solution of \eqref{cubic-intro} with initial data $G(0) = g$. Then
	\begin{equation*}
		\limsup_{k \to \infty} \| V^{\alpha}_k V^{\beta}_k V^{\gamma}_k \|_{N(I_k)} = 0,
	\end{equation*}
	where $V^{\delta} \in \{ U^{\alpha}, \overline{U}^{\alpha} , U^{\beta}, \overline{U}^{\beta} , U^{\gamma} , \overline{U}^{\gamma} , G , \overline{G} \}$ with $\delta = \alpha,\beta,\gamma$.
\end{proposition}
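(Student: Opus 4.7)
The plan is to exploit the structural decomposition of nonlinear Euclidean profiles from Corollary~\ref{cor-ep-Dec-Npro} together with the trilinear estimate of Proposition~\ref{prop-nonlinear}. Fix a small parameter $\theta > 0$. For each $\delta \in \{\alpha, \beta, \gamma\}$ I write
$$\mathbf{1}_{(-T_\theta^{-1}, T_\theta^{-1})}(t)\, U_k^\delta = \omega_k^{\delta,-\infty} + \omega_k^{\delta,+\infty} + \omega_k^\delta + \rho_k^\delta,$$
where $\rho_k^\delta$ is small in $X^{1/2}$, the pieces $\omega_k^{\delta,\pm\infty}$ are small in $Z'$, and $\omega_k^\delta$ is the localized bulk supported in the shrinking space-time box $S_k^\delta = \{|t-t_k^\delta|\leq T_\theta(N_k^\delta)^{-2},\; |x-x_k^\delta|\leq R_\theta(N_k^\delta)^{-1}\}$ with the pointwise bounds in~\eqref{fml-ep-ndecomp-proper}. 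In parallel, I approximate $G$ on its lifespan by a smooth compactly supported $G^\theta$ with $\|G - G^\theta\|_{X^{1/2}} \leq \theta$, using a standard density argument.

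Substituting and expanding the triple product $V_k^\alpha V_k^\beta V_k^\gamma$ against these decompositions produces a finite sum of elementary trilinear terms. Every term in which at least one factor is small (one of $\rho_k^\delta$, $\omega_k^{\delta,\pm\infty}$, or $G - G^\theta$) will be bounded via Proposition~\ref{prop-nonlinear}: by distributing the $X^{1/2}$- and $Z'$-norms so as to place the $X^{1/2}$-norm on a bounded factor and a $Z'$-norm on the small one, each such term has $N(I_k)$-norm $\lesssim \theta$ uniformly in $k$. This part is routine bookkeeping.

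The nontrivial terms are the triple products whose factors are all drawn from the bulk set $\{\omega_k^\delta, G^\theta\}$. For a product $\omega_k^\alpha \omega_k^\beta \omega_k^\gamma$ of three concentrated factors, I pass to a subsequence so that the scale ratios $N_k^\delta/N_k^{\delta'}$ admit definite limits. If all three scales remain comparable, orthogonality forces some pair to satisfy $(N_k^\delta)^2 |t_k^\delta - t_k^{\delta'}| + N_k^\delta |x_k^\delta - x_k^{\delta'}| \to \infty$, making $S_k^\delta \cap S_k^{\delta'} = \emptyset$ for large $k$, so the product vanishes identically. If one scale dominates, say $N_k^\gamma \gg N_k^\alpha$, the smallest box $S_k^\gamma$ is much thinner than the others; a direct H\"older estimate on the support combined with the pointwise bounds in~\eqref{fml-ep-ndecomp-proper} produces an $L_t^1 L_x^2$ bound with a factor of a positive power of $N_k^\alpha/N_k^\gamma$, and converting to $N(I_k)$ via Proposition~\ref{prop-pre-dual} yields vanishing. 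Terms involving one or more copies of $G^\theta$ are easier: each bulk factor $\omega_k^\delta$ is supported on a set whose measure shrinks like a power of $(N_k^\delta)^{-1}$, so direct H\"older against the uniformly bounded $G^\theta$ gives the required decay.

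The main obstacle will be the careful organisation of this case analysis, since orthogonality enters through competing quantities (scale ratios, time separations, and spatial separations) and each configuration must be reduced either to a support-disjointness statement or to a quantitative scale gap. The guiding observation is that, after passing to suitable subsequences, every triple falls into exactly one of the two regimes above. Once this is in place, letting $k \to \infty$ kills every main term and letting $\theta \to 0$ absorbs every error term, yielding the proposition.
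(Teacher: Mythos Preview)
Your proposal is correct and follows exactly the standard route taken in the references the paper cites. The paper itself omits the proof entirely, writing only ``The proof of this proposition is similar with that of \cite{Yu2021,Z2}, so we omit it.'' The argument in those references is precisely the one you outline: invoke the structural decomposition of Corollary~\ref{cor-ep-Dec-Npro}, dispose of all terms containing a small factor ($\rho_k^\delta$, $\omega_k^{\delta,\pm\infty}$, or $G-G^\theta$) via the trilinear estimate Proposition~\ref{prop-nonlinear}, and then handle the remaining products of concentrated bulk pieces $\omega_k^\delta$ and $G^\theta$ by the support/scale dichotomy you describe, estimating in $L_t^1H_x^{1/2}$ and passing to $N(I_k)$ by duality as in the proof of Lemma~\ref{lem-window-profile}. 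Your organisation of the case analysis---comparable scales forcing disjoint supports versus a dominant scale giving a quantitative gain---is the correct one.
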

The proof of this proposition is similar with that of \cite{Yu2021,Z2}, so we omit it.

\section{Proof of the main Theorem}
	In this section, we prove Theorem \ref{thm} by using the concentration-compactness/rigidity argument.
	
	We define
	\begin{equation*}
		\Lambda(L,\tau) := \sup\{ \|u\|_{Z(I)} : \|u\|_{H^{\frac{1}{2}}(\T^3)} < L, |I| < \tau \},
	\end{equation*}
	where $u$ is the strong solution to \ref{cubic-intro} with initial data $u_0$ in the interval $I$. It is not hard to find $\Lambda(L,\tau)$ is increasing function with $L$ and $\tau$. $\Lambda(L,\tau)$ have the sublinearity in $\tau$, that is for every $\tau,\sigma > 0, \Lambda(L,\tau + \sigma) \le \Lambda(L,\tau) + \Lambda(L,\sigma)$. Then, we define 
	\begin{equation*}
		\Lambda_{*}(L) := \lim_{\tau \to 0} \Lambda(L,\tau)
	\end{equation*}
	and the sublinear property guarantees for all fixed $\tau > 0$
	\begin{equation*}
		\Lambda(L,\tau) < \infty, \Longleftrightarrow \Lambda_{*}(L) < \infty.
	\end{equation*}
	Finally, we define the maximal energy such that $\Lambda_{*}(L)$ is finite:
	\begin{equation}\label{fml-thm-Emax}
		E_{max} := sup\{ L \in [0,+\infty]: \Lambda_{*}(L) < \infty \}.
	\end{equation}
	Note that the small initial data global well-posedness of \ref{cubic-intro} insures $E_{max} > 0$. To obtain Theorem \ref{thm-ep-GWP}, it is sufficient to show
	\begin{theorem}\label{thm-finite-energy}
		Let $E_{max}$ be as in \eqref{fml-thm-Emax}, then $E_{max} = \infty$.
	\end{theorem}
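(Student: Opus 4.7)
The plan is a concentration-compactness/rigidity argument of Kenig--Merle type: assume $E_{max} < \infty$ for contradiction, extract a blow-up sequence, decompose its initial data via the linear profile decomposition, and derive a contradiction by bounding the corresponding nonlinear superposition. The Euclidean control provided by Proposition \ref{prop-ep-Ec-pro} ensures that even the ``critical'' single-profile case is ruled out without a separate rigidity step.

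Assume $E_{max} < \infty$. By the definition of $\Lambda_*$, one can choose strong solutions $u_n$ on open intervals $I_n \ni 0$ with $|I_n| \to 0$, $\|u_n\|_{L_t^\infty H^{1/2}(I_n \times \T^3)} \le E_{max} + o(1)$, and $\|u_n\|_{Z(I_n)} \to \infty$. Setting $f_n = u_n(0)$, we apply the linear profile decomposition Proposition \ref{prop-ep-prop-decomp} to produce, up to a subsequence, a weak limit $g \in H^{1/2}(\T^3)$, pairwise orthogonal Euclidean frames $\mathcal{O}^\alpha$, profiles $\psi^\alpha \in \dot{H}^{1/2}(\R^3)$, and remainders $R^J_n$ with
\begin{equation*}
f_n = g + \sum_{1 \le \alpha \le J} \widetilde{\psi}^\alpha_{\mathcal{O}^\alpha_n} + R^J_n,\qquad \limsup_{J\to J^*}\limsup_{n\to\infty}\|e^{it\Delta}R^J_n\|_{Z(I_n)} = 0,
\end{equation*}
together with the $\dot{H}^{1/2}$ Pythagorean identity $\|g\|_{\dot{H}^{1/2}}^2 + \sum_\alpha \|\psi^\alpha\|_{\dot{H}^{1/2}}^2 \le E_{max}^2 + o_n(1)$.

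Each component is now individually controlled. Since $\|g\|_{H^{1/2}} \le E_{max}$, the nonlinear evolution $G$ of $g$ has bounded $Z$-norm on sufficiently short intervals by the very definition of $E_{max}$. Each nonlinear Euclidean profile $U^\alpha_n$ from Proposition \ref{prop-ep-Ec-pro} satisfies $\|U^\alpha_n\|_{X^{1/2}(-\tau_\alpha, \tau_\alpha)} \lesssim 1$ for some $\tau_\alpha = \tau(\psi^\alpha) > 0$ independent of $n$, the key point being that this bound rests on the $\R^3$ scattering result Theorem \ref{thm-ep-GWP} and therefore holds even if $\|\psi^\alpha\|_{\dot{H}^{1/2}} = E_{max}$. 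Because $|I_n| \to 0$, $I_n$ is eventually contained in the interval of definition of each $U^\alpha_n$ and of $G$. Form the approximation
\begin{equation*}
\widetilde{u}_n^J = G + \sum_{1 \le \alpha \le J} U^\alpha_n + e^{it\Delta}R^J_n,
\end{equation*}
which agrees with $u_n$ at $t=0$. The error $(i\partial_t + \Delta)\widetilde{u}_n^J - |\widetilde{u}_n^J|^2\widetilde{u}_n^J$ is bounded in $N(I_n)$ via the trilinear estimate Proposition \ref{prop-nonlinear}, the nonlinear orthogonality of Proposition \ref{prop-ep-nonlin-ortho} which kills every cross triple-interaction between distinct nonlinear components, and the $Z$-smallness of $e^{it\Delta}R^J_n$. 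Letting first $J \to J^*$ and then $n \to \infty$, the stability Proposition \ref{prop-stability} yields $\|u_n\|_{X^{1/2}(I_n)} \lesssim 1$, hence $\|u_n\|_{Z(I_n)} \lesssim 1$ by Remark \ref{rmk Z<X}. This contradicts the assumed blow-up, so $E_{max} = \infty$.

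The main obstacle is controlling the error produced by the approximation $\widetilde{u}_n^J$: one must show that all triple interactions mixing distinct nonlinear profiles, and those mixing any nonlinear profile with the linear remainder $e^{it\Delta}R^J_n$, have $N(I_n)$-norm vanishing in the iterated limit $n \to \infty$ then $J \to J^*$. Proposition \ref{prop-ep-nonlin-ortho} handles the first type and the $Z$-vanishing of $e^{it\Delta}R^J_n$ combined with Proposition \ref{prop-nonlinear} handles the second, but one must be careful that constants produced by the trilinear estimate depending on $\|U^\alpha_n\|_{X^{1/2}}$ are absorbed correctly before the $J$-sum is taken.
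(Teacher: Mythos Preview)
Your outline is correct and matches the paper's concentration-compactness argument: both extract a blow-up sequence, apply the linear profile decomposition, build the approximate solution $G+\sum_\alpha U_n^\alpha+e^{it\Delta}R_n^J$, control the error via the nonlinear orthogonality of Proposition~\ref{prop-ep-nonlin-ortho} together with the trilinear estimate, and conclude by stability. The paper packages the contradiction into three explicit cases (no Euclidean profiles; a single Euclidean profile with $g=0$; at least two nontrivial pieces), whereas you run the approximation argument uniformly; both routes reach the same conclusion.

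Two small points are worth fixing. First, your control of $G$ ``by the very definition of $E_{max}$'' is not available when $\|g\|_{H^{1/2}}=E_{max}$, i.e.\ when $g$ absorbs all the critical norm and there are no Euclidean profiles. The correct (and simpler) justification is that $g$ is a \emph{fixed} element of $H^{1/2}(\T^3)$, so Proposition~\ref{prop-lwp-LWP} produces a solution $G$ on some fixed interval $(-\tau_0,\tau_0)$ with $\|G\|_{X^{1/2}(-\tau_0,\tau_0)}\lesssim 1$, and $I_n\subset(-\tau_0,\tau_0)$ eventually since $|I_n|\to 0$; this is exactly what the paper's Case~1 isolates. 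Second, Proposition~\ref{prop-ep-Ec-pro} applies only to \emph{renormalized} Euclidean frames $\widetilde{\mathcal{F}}_e$, so before invoking it you must pass from the frames produced by Proposition~\ref{prop-ep-prop-decomp} to equivalent renormalized ones via Proposition~\ref{prop-ep-Eucpro-proper} and then transfer the bounds by stability, as the paper does just after \eqref{fml-thm-Znorm-inf}.
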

	
	We argue by contradiction, assume that $E_{max} < \infty$ and define
	\begin{equation*}
		L(\phi) := \sup_{t \in t_k}\{ \|u_{\phi}(t)\|^2_{H^{\frac{1}{2}}(\T^3)} \},
	\end{equation*}
	where $u_{\phi}$ denote the solution of \eqref{cubic-intro} with initial data $u(0) = \phi$.
	There exist a sequence of strong solution $\{u_k\}_k$ to \ref{cubic-intro} such that 
	\begin{equation}\label{fml-thm-Znorm-inf}
		L(u_k) \to E_{max}, \text{and} \quad \|u_k\|_{Z(I_k)} \to \infty,
	\end{equation}
	where $0 \in I_k$ and $|I_k| \to 0$. In this setting, $(u_k(0))_k$ is uniformly bounded, hence there exist unique $g \in H^{\frac{1}{2}}(\T^3)$ such that $u_k(0) \rightharpoonup g$. By proposition \ref{prop-ep-prop-decomp}, we can find a family of frames $\big(\mathcal{O}^{\alpha}\big)_{\alpha}$ and associated profiles $\big( \widetilde{\psi}^{\alpha}_{\mathcal{O}_{\alpha}} \big)_k$ such that, up to a subsequence, $u_k(0)$ has the decomposition
	\begin{equation}
		u_k(0) = g + \sum_{\alpha = 1}^J \widetilde{\psi}^{\alpha}_{\mathcal{O}_{\alpha}} + R^J_k.
	\end{equation}
	The remainder $R^J_k$ is small in the sense that
	\begin{equation*}
		\limsup_{J \to \infty}\limsup_{k \to \infty} \| e^{it\Delta} R_k^J \|_{Z(I_k)} = 0.
	\end{equation*}
	However, to apply Proposition \ref{prop-ep-prop-decomp}, we need replace frame $\mathcal{O}^{\alpha}$ by renormalized Euclidean frame $\widetilde{\mathcal{O}}^{\alpha}$. Considering $\mathcal{O}_{\alpha} = \{N_k, t_k, x_k\}_k$ is not a renormalized Euclidean frame, after passing to a subsequence, we have $|t_k|N_k^2 \to C$ for constant $0 \le C < \infty$ as $k \to \infty$. We define frame $\widetilde{\mathcal{O}}^{\alpha} := \{N_k, 0, x_k\}_k$, which is equivalent to $\mathcal{O}^{\alpha}$. By Proposition \ref{prop-ep-Eucpro-proper}, there exists a profile $\big( \widetilde{T_{\alpha} \psi^{\alpha}}_{\widetilde{\mathcal{O}}^{\alpha}_k} \big)_k$($T_{\alpha}$ denotes the isometry as in Proposition \ref{prop-ep-Eucpro-proper}), up to a subsequence, such that
	\begin{equation*}
		\lim_{k \to \infty} \| \widetilde{T_{\alpha} \psi^{\alpha}}_{\widetilde{\mathcal{O}}^{\alpha}_k} - \widetilde{\psi}^{\alpha}_{\mathcal{O}^{\alpha}_k} \|_{H^{\frac{1}{2}}(\T^3)} = 0.
	\end{equation*}
	Let $\widetilde{u}_k(0) := g + \sum_{\alpha = 1}^J \widetilde{T_{\alpha} \psi^{\alpha}}_{\widetilde{\mathcal{O}}^{\alpha}_k} + R^J_k$ and $\widetilde{u}_k$ be the solution to \eqref{cubic-intro} with initial data $\widetilde{u}_k(0)$. Applying the stability result in Proposition \ref{prop-stability}, we guarantee existence of $\widetilde{u}_k$ for sufficiently large $k$.  
	
	Supposing that $\|\widetilde{u}_k(0)\|_{Z(I_k)}$ is uniformly bounded, we can find $\|\widetilde{u}_k\|_{X^{\frac{1}{2}}(I_k)}$ is uniformly bounded, which implies
	\begin{equation*}
		\|\widetilde{u}_k\|_{Z(I_k)} + \|\widetilde{u}_k\|_{L^{\infty}(I_k,H^{\frac{1}{2}}(\T^3))} < \infty
	\end{equation*}
	uniformly in $k$. We also notice that
	\begin{equation*}
		\|u_k\|_{Z(I_k)} \le \|u_k - \widetilde{u}_k\|_{X^{\frac{1}{2}}(I_k)} + \|\widetilde{u}_k\|_{Z(I_k)},
	\end{equation*}
	combine it with Proposition \ref{prop-stability}, we get $\|u_k\|_{Z(I_k)}$ is uniformly bounded in $k$. Consequently, we only need to prove the case when $\mathcal{O}^{\alpha}$ are renormalized frames.
	
	Denoting that
	\begin{equation*}
		\mathcal{L}(\alpha) := \lim_{k \to \infty} L( \widetilde{\psi}^{\alpha}_{\mathcal{O}^{\alpha}_k} ),
	\end{equation*}
	and thanks to the orthogonality relation \eqref{fml-ep-ndecomp-proper} obtained in Proposition \ref{prop-ep-prop-decomp}, we divide the norm $\|\cdot\|_{L_t^{\infty}H_x^{\frac{1}{2}}(\T^3)}$ into three parts:
	\begin{equation*}
		\lim_{J \to \infty} \bigg( \sum_{\alpha = 1}^J \mathcal{L}(\alpha) + \lim_{k \to \infty} L(R^J_k) \bigg) + \mathcal{L}(g) \le E_{max}.
	\end{equation*}
	We call that $g$ is the scale-one profile and $\widetilde{\psi}^{\alpha}_{\mathcal{O}^{\alpha}_k}$ are Euclidean profile. The proof of Theorem \ref{thm-finite-energy} can be deduce to the following three cases
	\begin{enumerate}
		\item No Euclidean profiles,\\
		\item Only one Euclidean profile, while scale-one profile vanishes,\\
		\item other cases.
	\end{enumerate} 
	\begin{proof}[Proof of Theorem \ref{thm-finite-energy}]
		\textbf{Case (1): No Euclidean profiles:} Notice that $|I_k| \to 0$ as $k \to \infty$, then for any $\varepsilon > 0$ there exist $\eta > 0$ such that for sufficiently large $k$
		\begin{equation*}
			\|e^{it\Delta}u_k(0)\|_{Z(I_k)} \le \|e^{it\Delta}g\|_{Z(-\eta,\eta)} + \varepsilon \le 2\varepsilon.
		\end{equation*} 
		By proposition \ref{prop-lwp-LWP}, we have
		\begin{equation*}
			\|u_k\|_{Z(I_k)} \le 4 \varepsilon,
		\end{equation*}
		which contradicts with \eqref{fml-thm-Znorm-inf}.
		
		\textbf{Case (2): Only one Euclidean profile, while scale-one profile vanishes.} In this case, $g = 0$ identically, meanwhile, there exists only one profile $\widetilde{\psi}^1_P{\mathcal{O}^1_k}$ such that $\mathcal{L}(1) = E_{max}$. Let $U_k$ be the solution to \eqref{cubic-intro} with initial data $U^1_k(0) = \widetilde{\psi}^1_P{\mathcal{O}^1_k}$. From Proposition \ref{prop-lwp-LWP}, there exist $\tau > 0$ and $U^1_k \in X^{\frac{1}{2}}(-\tau,\tau)$ satisfies 
		\begin{equation*}
			\|U^1_k\|_{X^{\frac{1}{2}}(-\tau,\tau)} \lesssim 1.
		\end{equation*}
		Note that we can find $k$ large enough, such that $I_k \subset (-\tau,\tau)$. Hence, by the embedding relation, we get
		\begin{equation*}
			\|U^1_k\|_{Z(I_k)} + \|U^1_k\|_{L^{\infty}( I_k , H^{\frac{1}{2}}(\T^3) )} \lesssim 1,
		\end{equation*}
		for sufficiently large $k$. Combining this with 
		\begin{equation*}
			\limsup_{k \to \infty} \| u_k(0) - U^1_k(0) \|_{H^{\frac{1}{2}}(\T^3)} = 0,
		\end{equation*}
		allows us to use the stability result obtained in Proposition \ref{prop-stability}. Finally, we have
		\begin{equation*}
			\|u_k\|_{Z(I_k)} \lesssim 1,
		\end{equation*}
		for $k$ large enough, which contradict with \eqref{fml-thm-Znorm-inf}.
		
		\textbf{Case (3): other cases:} In this case, observe that $L(g) < E_{max}$ and $\mathcal{L}(\alpha) < E_{max}$ for every $1 \le \alpha \le J$. We may assume that 
		\begin{equation*}
		\begin{aligned}
			\mathcal{L}(\alpha)\le \mathcal{L}(1)   < E_{max} - \eta,\\
			L(g) < E_{max} - \eta,
		\end{aligned}
		\end{equation*}
		for some $\eta > 0$. Let $G$ and $U_k$ denote the maximal strong solution to \eqref{cubic-intro} with initial data $g$ and $\tilde{\psi}^{\alpha}_{\mathcal{O}^{\alpha}_k}$ respectively. From the definition of $E_{max}$, we can find that $G$ and $U_k(0)$ are global. Furthermore, passing to a subsequence, we have
		\begin{equation}\label{fml-thm-Uk-bound}
			\|G\|_{Z([-1,1])} + \|U_k\|_{Z([-1,1])} \lesssim 1.
		\end{equation}
		We write $U^{\alpha}_0 := G$ and define
		\begin{equation*}
			U^J_{prof, k} := G + \sum_{\alpha = 1}^J U^{\alpha}_k 
		\end{equation*}
		for every $J,k \in \Z$.
		
		We prove for $k$ large enough, 
		\begin{equation}\label{fml-thm-Uprofk-bound}
			\|U^J_{prof, k}\|_{X^{\frac{1}{2}}(-1,1)} \lesssim 1,
		\end{equation}
		uniformly in $J$. For any $0 < \delta < 1$, there are at most $M := \delta^{-1}E_{max}$ number of $\alpha$ such that $\mathcal{L}(\alpha) > \delta$. Hence, for all $\alpha > M$, we have $\mathcal{L}(\alpha) \le \delta$. Consequently, we obtain $\|U^{\alpha}_k(0)\|_{H^{\frac{1}{2}}(\T^3)} \le \mathcal{L}(\alpha)^{\frac{1}{2}} \le \delta^{\frac{1}{2}}$ for $\alpha > M$ and sufficiently large $k$. By the small data global well-posedness, we get a $U_k^{\alpha} \in X^{\frac{1}{2}}(-1,1)$ solve \eqref{cubic-intro} with initial data $U_k^{\alpha}(0)$, since $\delta$ can be taken arbitrarily. Finally, we get
		\begin{align*}
			\|U_k^{\alpha}\|_{X^{\frac{1}{2}}(-1,1)} \le & \|G\|_{X^{\frac{1}{2}}(-1,1)} + \sum_{\alpha = 1}^{M}\|U^{\alpha}_k\|_{X^{\frac{1}{2}}(-1,1)}\\ 
			& + \sum_{\alpha = M + 1}^J\|U^{\alpha}_k(t) - e^{it\Delta}U^{\alpha}_k(0)\|_{X^{\frac{1}{2}}(-1,1)} + \sum_{\alpha = M + 1}^J\|e^{it\Delta}U^{\alpha}_k(0)\|_{X^{\frac{1}{2}}(-1,1)}\\
			\lesssim & 1 + M + E_{max} + \bigg\|\sum_{\alpha = M+1}^J U_k^{\alpha}(0) \bigg\|_{H^{\frac{1}{2}}(\T^3)}.
		\end{align*}
		Thanks to almost orthogonality properties in Lemma \ref{lem-ep-decomp}, we have
		\begin{equation*}
			\bigg\| \sum_{\alpha = M+1}^J U_k^{\alpha}(0) \bigg\|^2_{H^{\frac{1}{2}}(\T^3)} = \sum_{\alpha = M+1}^J \|U_k^{\alpha}(0)\|^2_{H^{\frac{1}{2}}(\T^3)} + o_k(1) \lesssim E_{max}.
		\end{equation*}
		The proof of \ref{fml-thm-Uprofk-bound} has accomplished.
		
		Then, we define
		\begin{equation*}
			U_{app,k}^J := U_{prof,k}^J + e^{it\Delta}R^J_k,
		\end{equation*}
		to be an approximating solution to \eqref{cubic-intro} with error term
		\begin{equation}\label{fml-thm-error-term}
			e = \sum_{\alpha = 0}^J F(U^{\alpha}_k) - F\bigg(\sum_{\alpha = 0}^J U^{\alpha}_k + e^{it\Delta}R^J_k\bigg),
		\end{equation}
		where $F(u) = |u|^2u$. We need the following lemma to control the error term in \eqref{fml-thm-error-term}.
		\begin{lemma}\label{lem-thm-error-bound}
			Let $e$ be the same as in \eqref{fml-thm-error-term}, then
			\begin{equation*}
				\limsup_{J \to \infty}\limsup_{k \to \infty} \|e\|_{N(I_k)} = 0.
			\end{equation*}
		\end{lemma}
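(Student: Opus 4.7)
The plan is to algebraically expand the error $e$ and bound each piece using the trilinear estimate Proposition \ref{prop-nonlinear} together with the nonlinear orthogonality Proposition \ref{prop-ep-nonlin-ortho}. Writing $w_k^J = U_{prof,k}^J + e^{it\Delta} R_k^J$ and using that $F(u)=|u|^2u$ is a polynomial of degree three in $(u,\bar u)$, the expansion
\[
F(w_k^J) - \sum_{\alpha=0}^J F(U_k^\alpha) = \mathcal{E}_1 + \mathcal{E}_2
\]
decomposes $e$ into $\mathcal{E}_1$, the sum of all trilinear terms $\widetilde{U_k^{\alpha_1}}\,\widetilde{U_k^{\alpha_2}}\,\widetilde{U_k^{\alpha_3}}$ with $(\alpha_1,\alpha_2,\alpha_3)\in\{0,1,\ldots,J\}^3$ not all equal (where $\widetilde{\,\cdot\,}$ denotes a choice of the function or its complex conjugate, as in Proposition \ref{prop-nonlinear}), and $\mathcal{E}_2$, the sum of all trilinear terms involving at least one factor of $e^{it\Delta} R_k^J$.

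For $\mathcal{E}_2$ the plan is to apply \eqref{fml-trilinear}. The uniform bound \eqref{fml-thm-Uprofk-bound} together with Proposition \ref{prop-pre-Xs-Hs} gives $\|w_k^J\|_{X^{1/2}(I_k)} \lesssim 1$ uniformly in $J,k$, while Proposition \ref{prop-pre-Xs-Hs} and the almost-orthogonality \eqref{fml-ep-prof-Otho} provide $\|e^{it\Delta} R_k^J\|_{X^{1/2}(I_k)} \lesssim E_{\max}^{1/2}$. Combined with $\limsup_{J\to\infty}\limsup_{k\to\infty}\|e^{it\Delta}R_k^J\|_{Z(I_k)}=0$ from \eqref{5.2} and the definition of $Z'$, this yields $\|e^{it\Delta}R_k^J\|_{Z'(I_k)} \to 0$ in the double limit, so each trilinear piece of $\mathcal{E}_2$ vanishes.

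For $\mathcal{E}_1$, I would split the profile indices into a ``large'' range $\{0,1,\ldots,M\}$ and a ``small'' tail $\{M+1,\ldots,J\}$, where $M=M(\eta)$ is chosen so that $\mathcal{L}(\alpha) \leq \eta$ for $\alpha > M$; this is possible because $\sum_\alpha \mathcal{L}(\alpha)\leq E_{\max}$. For the finitely many triples with all three indices in $\{0,\ldots,M\}$ and not all equal, the orthogonality of the corresponding Euclidean frames together with Proposition \ref{prop-ep-nonlin-ortho} gives $\limsup_{k\to\infty}\|\widetilde{U_k^{\alpha_1}}\widetilde{U_k^{\alpha_2}}\widetilde{U_k^{\alpha_3}}\|_{N(I_k)}=0$. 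For triples with at least one index exceeding $M$, the plan is to group the tail $V_k^{>M}:=\sum_{\alpha>M}U_k^\alpha$ and estimate it by Proposition \ref{prop-nonlinear} using the almost-orthogonality
\[
\|V_k^{>M}\|_{X^{1/2}(I_k)}^2 \lesssim \sum_{\alpha>M}\|U_k^\alpha(0)\|_{H^{1/2}}^2 + o_k(1) \lesssim E_{\max},
\]
while by small-data theory any individual small profile satisfies $\|U_k^\alpha\|_{Z'(I_k)} \lesssim \eta^{1/2}$; plugging into \eqref{fml-trilinear} yields a contribution $O(\eta^{1/2} E_{\max})$ that vanishes as $\eta\to 0$ after taking $k\to\infty$ and $J\to\infty$.

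The main obstacle is the tail of $\mathcal{E}_1$: there are $O(J^3)$ cross terms, so Proposition \ref{prop-ep-nonlin-ortho} cannot be applied term by term. The resolution is to exploit the square-sum almost orthogonality of $H^{1/2}$-norms \eqref{fml-ep-prof-Otho} (which, after running the nonlinear evolution, transfers to $X^{1/2}$ via the small-data theory in Proposition \ref{prop-lwp-LWP} for the tail) in order to collapse the tail into a \emph{single} trilinear estimate, instead of summing term by term. Combining this with the smallness coming from the remainder $e^{it\Delta}R_k^J$ and the finitely many orthogonal cross terms among large profiles completes the proof.
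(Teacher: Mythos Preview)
Your overall architecture is the standard one and matches what the paper (via Proposition \ref{prop-ep-nonlin-ortho} and the citation to \cite{Yu2021}) intends. However, there is a genuine gap in your treatment of $\mathcal{E}_2$, and the same defect reappears in your handling of the tail of $\mathcal{E}_1$.

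When you apply \eqref{fml-trilinear} to a piece of the form $\widetilde{U_{prof,k}^J}\,\widetilde{U_{prof,k}^J}\,(e^{it\Delta}R_k^J)$, the right-hand side is a sum over the three placements of the $X^{\frac{1}{2}}$-factor. The permutation that puts $e^{it\Delta}R_k^J$ in the $X^{\frac{1}{2}}$ slot contributes
\[
\|e^{it\Delta}R_k^J\|_{X^{\frac{1}{2}}(I_k)}\,\|U_{prof,k}^J\|_{Z'(I_k)}^{\,2},
\]
and both factors here are merely $O(1)$: the remainder is only bounded in $X^{\frac{1}{2}}$, and $\|U_{prof,k}^J\|_{Z'}$ has no reason to vanish. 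The smallness of $\|e^{it\Delta}R_k^J\|_{Z'}$ never enters this permutation, so \eqref{fml-trilinear} alone cannot close the estimate. The identical obstruction undermines your tail argument: after collapsing to $V_k^{>M}=\sum_{\alpha>M}U_k^\alpha$, the permutation with $V_k^{>M}$ (or $U_{prof,k}^{\le M}$) in both $Z'$ slots is again only $O(1)$, so the promised $O(\eta^{1/2}E_{\max})$ bound is not justified.

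The remedy, which is what \cite{IPT3,YuYue} actually do, is to extend Proposition \ref{prop-ep-nonlin-ortho} so that $e^{it\Delta}R_k^J$ is one of the admissible factors $V^\delta$. The proof uses that $R_k^J$ is absent from every frame $\mathcal{O}^\alpha$, $\alpha\le J$, together with the decomposition of Corollary \ref{cor-ep-Dec-Npro}: each nonlinear profile is reduced to its concentrated piece $\omega_k^\theta$, whose explicit space--time localization and pointwise bounds on $S_k^\theta$ allow one to extract smallness directly from $\|e^{it\Delta}R_k^J\|_Z$. Once this extension is available, your large/small splitting is unnecessary: for each \emph{fixed} $J$ there are only finitely many cross terms (treating the remainder as index $J+1$), each tending to $0$ as $k\to\infty$ by the extended proposition, so $\limsup_{k\to\infty}\|e\|_{N(I_k)}=0$ for every $J$ and the outer $\limsup_{J\to\infty}$ follows trivially.
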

		Lemma \ref{lem-thm-error-bound} can be obtained by Proposition \ref{prop-ep-nonlin-ortho}, the proof is similar with \cite{Yu2021} and we omit it. Assuming Lemma \ref{lem-thm-error-bound}, for any $\varepsilon > 0$ we can find $J \ge J(\varepsilon)$ such that
		\begin{equation}
			\limsup_{k \to \infty} \|e\|_{N(I_k)} \le \frac{\varepsilon}{2}.
		\end{equation}
		By \eqref{fml-thm-Uprofk-bound}, we can find $\|U_{app,k}^J\|_{X^{\frac{1}{2}}(I_k)}$ is uniformly bounded in $J$. Apply this with Proposition \ref{prop-stability}, we have
		\begin{equation*}
			\|u_k\|_{X^{\frac{1}{2}}(I_k)} \lesssim \|U^J_{app,k}\|_{X^{\frac{1}{2}}(I_k)} \le \|U^J_{prof,k}\|_{X^{\frac{1}{2}}(-1,1)} + \|e^{it\Delta}R^J_k\|_{X^{\frac{1}{2}}(-1,1)} \lesssim 1,
		\end{equation*}
		for $k$ large enough. We also obtain a contradiction with \eqref{fml-thm-Emax}. So far, we have finished the proof of Theorem \eqref{thm-finite-energy}.
	\end{proof}

\vspace{5mm}

\end{document}